\numberwithin{equation}{section}
\newcommand{\CC}{\mathbb{C}}
\newcommand{\PP}{\mathbb{P}}
\newcommand{\ZZ}{\mathbb{Z}}
\newcommand{\cal}{\mathcal}
\def\cK{{\cal K}}
\def\cM{{\cal M}}
\def\cO{{\cal O}}
\def\cQ{{\cal Q}}
\def\cS{{\cal S}}
\def\cU{{\cal U}}
\newcommand{\lr}{\rightarrow}
\def\PP{\mathbb{P}}
\def\CC{\mathbb{C}}
\def\lra{\longrightarrow}
\newcommand{\ses}[3]{0\rightarrow{#1}\rightarrow{#2}\rightarrow{#3}\rightarrow0}
\newtheorem{prop}{Proposition}[section]
\newtheorem{theo}[prop]{Theorem}
\newtheorem{lemm}[prop]{Lemma}
\newtheorem{coro}[prop]{Corollary}
\newtheorem{rema}[prop]{Remark}
\newtheorem{defi}[prop]{Definition}
\def\beq{\begin{equation} }
\def\eeq{\end{equation} }
\def\zero{\mathrm{zero} }
\def\rank{\mathrm{rank} }
\title[Linear sections of Grassmannian]{Geometry of moduli spaces of rational curves in linear sections of Grassmannian $Gr(2,5)$}
\author{Kiryong Chung}%\thanks{*Corresponding author.}
\address{Department of Mathematics Education, Kyungpook National University, 80 Daehakro, Bukgu, Daegu 41566, Korea}
\email{krchung@knu.ac.kr}
\author{Jaehyun Hong}
\address{School of Mathematics, Korea Institute for Advanced Study, 85 Hoegiro, Dongdaemun-gu, Seoul 02455, Republic of Korea}
\email{jhhong00@kias.re.kr}
\author{SangHyeon Lee}
\address{Department of Mathematical Sciences, Seoul National University, GwanAkRo 1, Seoul 08826, Korea}
\email{tlrehrdl@snu.ac.kr}
\keywords{Grassmannian, rational curves, Fano variety, birational geometry}
\subjclass[2010]{14E05, 14E08, 14M15.}
\begin{document}
\begin{abstract}
We prove that the moduli spaces of rational curves of degree at most $3$ in linear sections of the Grassmannian $Gr(2,5)$ are all rational varieties. We also study their compactifications and birational geometry.
\end{abstract}
\maketitle

\def\coindex{\mathrm{coindex} }
\def\index{\mathrm{index} }

\section{Introduction}
Rational curves in Fano varieties have played useful roles in algebraic geometry as in the works of Clemens-Griffiths \cite{CG}, Iskovski \cite{Isk}, Beauville-Donagai \cite{BD}, Lehn-Lehn-Sorger-van Straten \cite{LLSV}, Takkagi-Zucconi \cite{TZ11} and Iliev-Manivel \cite{iliev3}.
Understanding the birational geometry of the moduli spaces of rational curves in a Fano variety may lead us to interesting examples of new varieties or may reveal some internal structure of Fano varieties (\cite{CS09,PZ15}).
In \cite{K,CK,CHK}, the authors investigated the birational geometry of compactified moduli spaces of rational curves of degree $\le 3$ in projective spaces and homogeneous varieties.
The purpose of this paper is to investigate the geometry of moduli spaces of rational curves of degree $\le 3$ in linear sections of Grassmannian $Gr(2,5)$.
%We prove that  the moduli spaces of rational curves of degree at most 3 in linear sections of the Grassmannian $Gr(2,5)$ are all rational varieties. We also study their compactifications and birational geometry.

A smooth Fano varietiy refers to a smooth projective variety $Y$ whose canonical bundle $K_Y$ is antiample, i.e. the dual $K_Y^\vee$ of $K_Y$ is ample. The index of $X$ is defined to be the largest positive integer $r$ such that $K_Y^\vee$ is
the $r$-th power of a line bundle on $Y$. If $r=\index(Y)$, we denote the $r$-th root of $K_Y^\vee$ by $\cO_Y(1)$. It is well known \cite{IP} that the index of a smooth Fano variety is bounded from above by $\dim(Y)+1$. We define
$$\coindex(Y)=\dim Y+1-\index (Y).$$
If $\coindex(Y)=0$, $Y$ is a projective space. When $\coindex(Y)=1$, $Y$ is a quadric hypersurface, which is a homogeneous variety. Therefore the results of \cite{CK,CHK} account for the birational geometry of rational curves of degree $\le 3$ when $\coindex(Y)\le 1$.

Among Fano varieties of coindex 2, cubic hypersurfaces have attracted the most interest.
In \cite{CG}, Clemens-Griffiths utilized the moduli space of lines to prove the nonrationality of cubic 3-folds.
In \cite{BD}, Beauville-Donagi showed that the moduli space of lines in a cubic 4-fold is a hyperk\"ahler manifold of dimension 4.
In \cite{LLSV}, Lehn-Lehn-Sorger-van Straten investigated the birational geometry of moduli spaces of twisted cubic curves in a cubic 4-fold and constructed a hyperk\"ahler manifold of dimension 8.

Another interesting family of Fano varieties of coindex 2 is linear sections of Grassmannian $Gr(2,5)$.
Let $Gr(2,5)\subset \PP^9$ denote the Pl\"ucker embedding and let $Y^m_5$ denote the intersection of $Gr(2,5)$ with $6-m$ general hyperplanes in $\PP^9$ for $2\le m\le 6$. Then $Y^m_5$ is a smooth Fano $m$-fold of degree 5 and coindex 2.
These are of particular interest because they are solutions to Hirzebruch's problem: $Y^m_5$ is a smooth projective compactification of $\CC^m$ with $b_2(Y^m_5)=1$. The goal of this paper is to investigate the geometry of moduli spaces of rational curves in $Y^m_5$.

Rational curves in the Fano 3-fold $Y^3_5$ have been studied in a \cite{Fae, FuNa, Ili, San, CS16}.
Let $H_d(Y^3_5)$ denote the Hilbert scheme of closed subschemes of $Y^3_5$ with Hilbert polynomial $dt+1$. Then it was proved by Faenzi, Furushima-Nakayama, Iliev and Sanna that
\beq\label{sI1} H_1(Y^3_5)\cong \PP^2,\quad H_2(Y^3_5)\cong \PP^4,\quad \mathrm{and} \quad H_3(Y^3_5)\cong Gr(2,5).\eeq
In particular these moduli spaces are all rational and irreducible. In this paper, we prove
\begin{theo}\label{sI3} (Theorem \ref{H14})
The moduli spaces $R_d(Y^m_5)$ of smooth rational curves of degree $d$ on $Y^m_5$
are all rational for $d\le 3$ and $2\le m\le 6$.
\end{theo}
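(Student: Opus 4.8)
The plan is to treat the three degrees $d=1,2,3$ separately, in each case realizing $R_d(Y^m_5)$ (up to birational equivalence) as a Grassmannian or projective bundle over a rational base, and to organize the dependence on $m$ through a single linear reformulation of the linear-section conditions. The starting point is that $Y^m_5=Gr(2,5)\cap\PP^{m+3}$, where $\PP^{m+3}=\PP(U^\perp)$ and $U=\langle\omega_1,\dots,\omega_{6-m}\rangle\subset\wedge^2(\CC^5)^\vee$ is the space of alternating forms cutting out the $6-m$ hyperplanes. Hence a smooth degree-$d$ rational curve $C\subset Gr(2,5)$ lies on $Y^m_5$ if and only if its linear span $\langle C\rangle$ is contained in $\PP^{m+3}$; writing the Pl\"ucker parametrization of $C$ as $d+1$ coefficient vectors in $\wedge^2\CC^5$, this is the requirement that all of them be annihilated by $U$, i.e. $(6-m)(d+1)$ conditions that are \emph{linear} in the Pl\"ucker coordinates of $C$. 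A dimension count gives the expected values $\dim R_d(Y^m_5)=m+(m-1)d-3$, namely $2m-4,\ 3m-5,\ 4m-6$ for $d=1,2,3$, and I would first verify that these loci are nonempty of the expected dimension for general $U$.

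For $d=1$ I would project $R_1(Y^m_5)$ to $\PP^4=\PP(\CC^5)$, sending a line (a pencil of $2$-planes containing a fixed $V_1$ inside a fixed $V_3$) to $V_1$. For a general $v_1$ spanning $V_1$, a plane $V_3\supset V_1$ gives a line on $Y^m_5$ exactly when $V_1\subseteq\mathrm{rad}(\omega_i|_{V_3})$ for all $i$, i.e. $V_3\subseteq H(V_1):=\bigcap_i\ker(\omega_i(v_1,\cdot))$; for general $v_1$ the functionals $\omega_i(v_1,\cdot)$ are independent, so $\dim H(V_1)=m-1$ and the fibre is $Gr(2,H(V_1)/V_1)\cong Gr(2,m-2)$. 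Thus for $m\ge4$ the space $R_1(Y^m_5)$ is birational to a Grassmannian bundle over $\PP^4$, and is therefore rational; the remaining cases $m=2,3$ are covered by \eqref{sI1} and a direct computation on the quintic del Pezzo surface.

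For $d=2,3$ I would use the standard parametrization of a degree-$d$ map $\PP^1\to Gr(2,5)$ by a globally generated rank-$2$ bundle $E$ of degree $d$ on $\PP^1$ together with a surjection $\cO^{\oplus5}\twoheadrightarrow E$. On the open locus of balanced splitting type ($E\cong\cO(1)^{\oplus2}$ for $d=2$, $E\cong\cO(2)\oplus\cO(1)$ for $d=3$) this is a matrix of forms, and after quotienting by $\mathrm{Aut}(E)$ and $\mathrm{PGL}_2$ and recording the linear span in $\PP^4$ of the ruled surface swept out by $C$ (the $\PP^3$ carrying the quadric scroll for $d=2$, all of $\PP^4$ for the cubic scroll for $d=3$), the isotropy conditions imposed by $U$ become linear in the remaining parameters. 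This exhibits $R_d(Y^m_5)$ as fibered in projective spaces over a base of ``reduced scroll data,'' so that rationality is reduced to rationality of that base.

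The main obstacle is precisely this third step for $d=2$ and especially $d=3$: the base over which $R_d(Y^m_5)$ fibers is a determinantal variety — the locus of scroll (equivalently, bundle) data whose ruling lines are simultaneously isotropic for all $\omega\in U$ with the correct number of free moduli — and one must (a) prove it has the expected dimension $3m-5$, respectively $4m-6$, (b) prove it is rational, and (c) bound the degenerate loci (unbalanced splitting type, reducible or singular curves, and configurations where $\langle C\rangle$ fails to meet $\PP^{m+3}$ in the expected way) so that they lie in codimension $\ge1$ and do not affect the birational type. I expect the needed inputs to be the genericity of the forms $\omega_i$, the $\mathrm{SL}_2$-homogeneity of $Y^m_5$, and the explicit descriptions \eqref{sI1} for $m=3$, which together should force the expected-dimensional rational bundle structure in every case.
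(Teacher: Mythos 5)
Your $d=1$ argument (vertex map to $\PP^4$ with $Gr(2,m-2)$ fibres) is exactly the paper's, and your overall strategy for $d=2,3$ --- fibre $R_d(Y^m_5)$ over a rational base so that the linear-section conditions become linear on each fibre --- is also the right one. But the proposal stops precisely where the work lies: you never identify what the ``base of reduced scroll data'' is, and you list its expected dimension, its rationality, and the control of degenerate loci as open problems (a)--(c). The paper closes this by choosing the correct discrete invariant of the curve, so that the base is itself a Grassmannian and the fibres are honest linear sections. For $d=2$ this is the \emph{envelope}: every smooth conic $C\subset Gr(2,5)$ determines a unique $V_4\subset\CC^5$ with all lines of the ruling inside $\PP(V_4)$, giving $\eta_2:R_2\dashrightarrow Gr(4,5)=\PP^4$; the fibre over a general $V_4$ is the space of conics on the quadric fourfold $Gr(2,V_4)\subset\PP^5$ cut by the $6-m$ hyperplanes, and since a general plane section of a quadric is a smooth conic this fibre is birational to $Gr(3,m)$. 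For $d=3$ it is the \emph{axis}: a general twisted cubic is the ruling of a scroll $S(\ell,C'')$ and is sent to $\ell$, giving $\eta_3:R_3\dashrightarrow Gr(2,5)$; the key computation you are missing is that $\sigma_{2,0}(\ell)$ spans a $\PP^6$ and is cut out there by three quadrics, so that a general $\PP^3\subset\PP^6$ meets it in a twisted cubic, making the fibre birational to $Gr(4,m+1)$. These two facts replace your items (a)--(c); without them the $d=3$ case in particular is not proved. (Your remark that the conditions ``become linear'' is only correct after passing to the linear span of $C$ in the Pl\"ucker space of the envelope/axis Schubert cycle --- in the raw matrix parametrization the Pl\"ucker coordinates are quadratic in the entries.)

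A second genuine gap is irreducibility. Your birational models only describe an open locus of well-behaved curves (balanced splitting type, nondegenerate scroll), so to conclude that $R_d(Y^m_5)$ itself is rational you must know it is irreducible; ``nonempty of the expected dimension'' is not enough, since a priori there could be other components missed by the fibration. The paper proves irreducibility for $m\ge 3$ (Corollary \ref{H12}) by an incidence variety $\{(C,H)\}\to Gr(13-m,10)$ whose general fibre is $R_d(Y^3_5)$, which is irreducible by the known threefold results \eqref{sI1}; every degree $\le 3$ curve lies in some such $H$, so the incidence variety dominates $R_d(Y^m_5)$. You should add this step (or an equivalent deformation-theoretic argument) before asserting rationality of the whole moduli space.
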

To prove this theorem, we first classify smooth rational curves on $Gr(2,5)$ of degree $\le 3$ (cf. \S\ref{sG}). We find that a line (resp. conic, resp. twisted cubic) in $Gr(2,5)$ has a \emph{vertex} (resp. \emph{envelope}, resp. \emph{axis}) which gives us a rational map
$\eta_1:R_1(Y^m_5)\dashrightarrow \PP^4$ (resp. $\eta_2:R_2(Y^m_5)\dashrightarrow \PP^4$, resp. $\eta_3:R_3(Y^m_5)\dashrightarrow Gr(2,5)$). Analyzing the fibers of these maps, we obtain the desired rationality(\S \ref{sramo}).
As a consequence, we can only expect to find rational varieties from the birational geometry of rational curves in $Y^m_5$, unlike the case of cubic hypersurfaces (cf. \cite{BD, LLSV}).

After proving Theorem \ref{sI3}, we investigate compactified moduli spaces of rational curves in $Y^m_5$.
In \S\ref{sy65}, we describe the birational geometry of compactified moduli spaces of rational curves of degree $d\le 3$ in $Y^6_5=Gr(2,5)$ from \cite{CK,CHK}. For lines, it is straightforward that $R_1(Y^6_5)=H_1(Y^6_5)$ is the flag variety $Gr(1,3,5)$.
For conics, the quasimap perspective (cf. \cite{KM}) gives us  a compactification
\beq\label{sI2} \PP (\mathrm{Hom}(\CC^2,\CC^2)^{\oplus 5})/\!/SL_2\times SL_2.\eeq
The two $SL_2$ act on the two $\CC^2$ in the standard manner and hence we find that \eqref{sI2} is the quiver variety associated to the quiver with two vertices and five edges connecting the vertices (\cite[Proposition 15]{Dre87} and \cite{Kin94}).

In \S\ref{sy55}, we prove that
\beq\label{sI4} H_1(Y^5_5)=R_1(Y^5_5)=\text{blow-up of }Gr(3,5)\text{ along a smooth quadric 3-fold }\Sigma.\eeq
We also prove that the Fano variety $F_2(Y^5_5)$ of planes in $Y^5_5$ is the disjoint union of the blowup of $\PP^4$ at a point and the smooth quadric 3-fold $\Sigma$ (cf. \cite[\S 3.2]{iliev3}).
We show that $H_2(Y^5_5)$ can be obtained from a $Gr(3,5)$-bundle over $\PP^4$ by a single blow-up/-down (Proposition \ref{birmodel}).

In \S\ref{sy45}, we recall classical results of Todd (cf. \cite{Todd}) on the Fano variety $F_2(Y^4_5)$ of planes and the moduli space $R_1(Y^4_5)=H_1(Y^4_5)$ of lines in $Y^4_5$. We give elementary proofs of these results. 
By the same method in the $5$-fold case, we further prove that $H_2(Y^4_5)$ can be obtained from a $\PP^3$-bundle over $\PP^4$ by a single blow-up/-down (Proposition \ref{birmodel2}).
As a direct corollary, 
\begin{theo}(Proposition \ref{birmodel}, \ref{birmodel2})
Hilbert schemes $H_2(Y^m_5)$ for $m=4, 5$ of conics are irreducible and smooth.
\end{theo}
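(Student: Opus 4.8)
The plan is to read off the corollary from the explicit birational models constructed in Propositions~\ref{birmodel} and~\ref{birmodel2}. In both cases the proposition places $H_2(Y^m_5)$ in a roof diagram
\[
B_m \longleftarrow W_m \longrightarrow H_2(Y^m_5),
\]
where $B_5$ is a $Gr(3,5)$-bundle over $\PP^4$, $B_4$ is a $\PP^3$-bundle over $\PP^4$, the left arrow is the blow-up of $B_m$ along a smooth center, and the right arrow is the contraction of a divisor of $W_m$ onto $H_2(Y^m_5)$. I would simply propagate smoothness and irreducibility through these two elementary operations.

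First I would record that each base bundle $B_m$ is smooth and irreducible: it is the total space of a locally trivial fibration with smooth irreducible fibre $Gr(3,5)$, resp.\ $\PP^3$, over the smooth irreducible base $\PP^4$, so its total space is smooth and connected, hence smooth and irreducible. Since the blow-up of a smooth irreducible variety along a smooth irreducible center is again smooth and irreducible, $W_m$ inherits both properties. Irreducibility of $H_2(Y^m_5)$ is then immediate: the contraction $W_m \to H_2(Y^m_5)$ is a surjective morphism from an irreducible space, so its image is irreducible, and in particular no spurious component of degenerate conics can occur. This last point is the one that genuinely needs the global model rather than a mere birational map, since Hilbert schemes of conics frequently acquire extra components.

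The substantive point is the smoothness of $H_2(Y^m_5)$, equivalently that the right arrow is a \emph{smooth} blow-down. Here I would identify the contracted divisor $E_m \subset W_m$ as a projective bundle $E_m \to Z_m$ over a smooth center $Z_m$ --- geometrically $Z_m$ parametrizes the special (degenerate) conics lying over the indeterminacy locus of the map to $\PP^4$ --- and check that the normal bundle $\cO_{W_m}(E_m)$ restricts to $\cO(-1)$ on each fibre of $E_m \to Z_m$. Granting this, the Fujiki--Nakano contractibility criterion identifies $W_m \to H_2(Y^m_5)$ with the blow-up of a smooth variety along the smooth subvariety $Z_m$, whence $H_2(Y^m_5)$ is smooth. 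The main obstacle is precisely this verification: determining the scheme structure of $E_m$ and computing its normal bundle requires a local analysis of $H_2(Y^m_5)$ along the locus of degenerate conics. Once Propositions~\ref{birmodel} and~\ref{birmodel2} furnish that description, smoothness and irreducibility of $H_2(Y^m_5)$ for $m=4,5$ follow formally.
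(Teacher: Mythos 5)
Your roof diagram and the two propagation steps (smoothness and irreducibility through a smooth blow-up, then Fujiki--Nakano for the contraction) are exactly the paper's strategy, and your identification of the $\cO(-1)$ normal bundle condition as the substantive verification matches the proofs of Propositions \ref{birmodel} and \ref{birmodel2}. There is, however, a genuine gap at the final identification step. The Fujiki--Nakano criterion produces an \emph{abstract} smooth variety $X_m$ together with a realization of $W_m\to X_m$ as a smooth blow-up; it does not by itself identify $X_m$ with the Hilbert scheme $H_2(Y^m_5)$ carrying its natural scheme structure. The induced morphism $X_m\to H_2(Y^m_5)$ is bijective and birational, but a bijective birational morphism onto a non-reduced (or merely non-normal) target need not be an isomorphism, and Hilbert schemes of conics can a priori carry nilpotents along the degenerate locus. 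Your argument establishes smoothness of $X_m$ and irreducibility of the underlying topological space of $H_2(Y^m_5)$, but says nothing about its scheme structure. The paper closes this explicitly: it first proves $H_2(Y)$ is reduced by repeating the argument of Lemma \ref{smoothness} (local complete intersection of the expected dimension, hence Cohen--Macaulay, hence reduced once generically reduced), and only then concludes that ``$H_2(Y)$ is smooth whenever $H_2(Y)$ is reduced and irreducible.''

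Two smaller points. First, your irreducibility argument needs the surjectivity of $W_m\to H_2(Y^m_5)$, i.e., that the \emph{proper} transform $\widetilde{S}(Y)$ of $S(Y)$ inside the relative Hilbert scheme $H_2(Gr(2,\cU))$ still dominates every conic in $Y$, including the degenerate ones and those lying in planes of $Y$; the paper extracts this from the global diagram \eqref{diahil} rather than treating it as automatic. Second, the statement that $W_m\to B_m$ is a blow-up along a smooth center is itself not free: it requires that $S(Y)$ meet the blow-up center $T(G)$ of Proposition \ref{chen} cleanly, which is the content of Lemma \ref{clean} (via the tangent-space computation) together with the explicit descriptions of $T(Y)$ in Proposition \ref{diff} and Remark \ref{diff4}. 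Since you quote Propositions \ref{birmodel} and \ref{birmodel2} for the models, these inputs are arguably available to you, but the reducedness issue in the first paragraph is a step your sketch genuinely omits.
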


The method studying about lines or conics in this paper can be applied in the $3$-fold case $Y_5^3$. In \S \ref{sy35}, we reprove the well-known results in $\eqref{sI1}$ about the space of lines and conics in $Y_5^3$.

\subsection{Notations}
All the schemes in this paper are defined over $\CC$ and $Gr(k,n)$ denotes the space of $k$-dimensional subspaces in $\CC^n$. Let $\{e_0,e_1,\cdots, e_{n-1}\}$ be the standard basis of $\CC^n$ unless otherwise stated. Let $p_{i_1i_2 \cdots i_k}$'s denote the Pl\"ucker coordinates of the Grassmannian $Gr(k,n)\hookrightarrow \PP(\wedge^k \CC^n)$.
\subsection*{Acknowledgements}
The first named author wishes to express his gratitude to Atanas Iliev and Young-Hoon Kiem for leading the interest in this topic. We would like to thank Han-Bom Moon and Wanseok Lee for valuable discussions and comments.
% This research was supported by Kyungpook National University Research Fund, 2015.
We also thank the anonymous reviewer for valuable comments and suggestions to improve the quality of the paper.

\section{Rational curves in Grassmannians}\label{sG}
In this section, we  classify rational curves of degree $\le 3$ in Grassmannians. Let $Gr(2,n), (n\ge 4)$ be the Grassmannian of lines in $\PP^{n-1}$ and consider the Pl\"ucker embedding
$$G:=Gr(2,n)\hookrightarrow \PP(\wedge^2 \CC^n)= \PP^{\binom{n}{2}-1}.$$
%Let $C\subset Gr(2,n)\subset \PP^{\binom{n}{2}-1}$ be a connected curve of degree $d$. 
For fixed subspaces $V_1\subset V_2 \subset \CC^n$. Let $\sigma_{a_1,a_2}=\{[L]\in G | \mathrm{dim} (L\cap V_i)\geq i\}$ be the Schubert variety (class) where $a_i:=n-2+i-\mathrm{dim}(V_i)$ for $i=1,2$.
\def\La{\Lambda }
\def\taula{\tau_{\La} }
\def\talai{\tau_{\Lambda_i} }
\def\taula1{\tau_{\Lambda_1} }
\def\taula2{\tau_{\Lambda_2} }
\def\taula3{\tau_{\Lambda_3} }
\def\silap{\sigma_{3,2}(p,\Lambda) }
\def\deg{\mathrm{deg}\, }

To study the rational curves in $Gr(2,n)$, we only consider the following collection of Schubert cycles in $Gr(2,n)$.
\begin{defi}\label{H4} Let us think of a point $\ell\in Gr(2,n)$ as a line in $\PP^{n-1}$ and fix a flag $p\in \PP^1\subset \PP^2\subset \PP^3\subset \PP^{n-1}$.
Then we define
\begin{itemize}
\item $\sigma_{n-4,0}=\{\ell\,|\, \ell\cap \PP^2\ne \emptyset\} \quad \quad (\dim n, \deg  n(n-3)/2)$
\item $\sigma_{n-3,0}=\{\ell\,|\, \ell\cap \PP^1\ne \emptyset\} \quad \quad (\dim n-1, \deg  n-2)$
\item $\sigma_{n-4,n-4}=\{\ell\,|\, \ell\subset \PP^3\} \quad \quad \quad(\dim 4, \deg  2)$
\item $\sigma_{n-3,n-4}=\{\ell\,|\, \ell\cap \PP^1\ne \emptyset, \ell\subset \PP^3 \} \quad (\dim 3, \deg  2)$
\item $\sigma_{n-2,0}=\{\ell\,|\, p\in\ell\} \quad \quad \quad \quad (\dim n-2, \deg  1)$
\item $\sigma_{n-3,n-3}=\{\ell\,|\, \ell\subset \PP^2\} \quad \quad \quad (\dim 2, \deg  1)$
\item $\sigma_{n-2,n-4}=\{\ell\,|\, p\in \ell\subset \PP^3\} \quad \quad (\dim 2, \deg  1)$
\item $\sigma_{n-2,n-3}=\{\ell\,|\, p\in\ell\subset \PP^2\} \quad \quad (\dim 1, \deg  1)$.
\end{itemize}\end{defi}
The dimensions (and the degrees) of the Schubert cycles comes from \cite[Page 196]{GrHa} and \cite[Example 14.7.11]{Ful}. When $n=5$, these cycles freely generate the homology $H_*(Gr(2,5),\ZZ)$. 
Let $S(C,C')$ be the \emph{rational normal scroll} arising from the rational normal curves $C$ and $C'$ (allowing $C$ to be a point).
\begin{prop}\label{G1} Let $C$ be a smooth rational curve in $G=Gr(2,n)$ of degree $d$ with respect to the Pl\"ucker embedding.
\begin{enumerate}
\item If $d=1$, $C$ is the Schubert variety $\sigma_{n-2,n-3}(p,\Lambda)$ of lines in a fixed plane $\Lambda\subset \PP^{n-1}$ passing through a fixed point $p\in \Lambda$.
\item If $d=2$, $C$ is either the variety of lines of the ruling of the scroll $S(p, C')$ for a point $p$ and a conic $C'$ in $\PP^3$, 
or the variety of lines of ruling of the scroll $S(\ell_0, \ell_1)$ for two lines $\ell_0$ and $\ell_1$.
\item If $d=3$, $C$ is either the variety of lines of the ruling of $S(p, C')$ for a point $p$ and a twisted cubic curve $C'$ in $\PP^{n-1}$, or the variety of lines of the ruling of $S(\ell, C'')$ for a line $\ell$ and a conic $C''$.
%\item If $d=2$, $C$ is either the variety of lines passing through a fixed point $p$ and points in a fixed conic $C'$ in $\PP^3$, or the variety of lines joining a point moving in a fixed line $\ell_0$ and another point moving in another fixed line $\ell_1$.
%\item If $d=3$, $C$ is either the variety of lines passing through a fixed point $p$ and points in a fixed twisted cubic curve $C'$ in $\PP^{n-1}$, or the variety of lines joining a point moving 
%in a fixed line $\ell$ and another point moving in a fixed conic $C''$.
\end{enumerate}
\end{prop}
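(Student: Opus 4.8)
The plan is to translate the problem into the theory of vector bundles on $\PP^1$. Since $C$ is a smooth rational curve, its normalization is an isomorphism and we obtain an embedding $f\colon \PP^1 \xrightarrow{\sim} C \hookrightarrow G=Gr(2,n)$. Pulling back the tautological sequence $0\to \cS \to \cO^{\oplus n}\to \cQ\to 0$ along $f$ yields a rank-$2$ subbundle $\cE:=f^*\cS\hookrightarrow \cO_{\PP^1}^{\oplus n}$, whose quotient $f^*\cQ$ is locally free, so $\cE$ is saturated. The Pl\"ucker degree of $C$ equals $\deg f^*\cO_G(1)=\deg\det\cE^\vee=-\deg\cE$. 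By Grothendieck's theorem $\cE\cong \cO(-a)\oplus\cO(-b)$ with $a\le b$; dualizing $\cE\hookrightarrow\cO^{\oplus n}$ gives a surjection $\cO^{\oplus n}\twoheadrightarrow\cE^\vee=\cO(a)\oplus\cO(b)$, so $\cE^\vee$ is globally generated, forcing $a,b\ge 0$, while $a+b=d$. The constraint $0\le a\le b$, $a+b=d$ leaves exactly $(a,b)=(0,1)$ for $d=1$; $(0,2)$ or $(1,1)$ for $d=2$; and $(0,3)$ or $(1,2)$ for $d=3$. Only the forward direction is needed, so it suffices to match each splitting type to a scroll in the statement.

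The geometric dictionary runs through the ruled surface $\PP(\cE)\to\PP^1$ and the morphism $g\colon\PP(\cE)\to\PP^{n-1}$ induced by $\cE\hookrightarrow\cO^{\oplus n}$: the fibre over $t$ maps to the line $\ell_t=f(t)$, so the image $\Sigma_C=g(\PP(\cE))$ is the surface swept out by the lines parametrized by $C$, and $C$ is exactly its variety of rulings. A summand $\cO=\cO(0)$, which appears precisely when $a=0$, is a saturated sub-line-bundle of $\cO^{\oplus n}$, hence a constant section, i.e. a fixed point $p$ lying on every $\ell_t$. Passing to $\cO^{\oplus n}/\langle p\rangle\cong\cO^{\oplus(n-1)}$ shows the remaining summand $\cO(-b)\hookrightarrow\cO^{\oplus(n-1)}$ is again saturated, hence base-point-free of degree $b=d$, with image a rational normal curve $C'$, and $\Sigma_C$ is the cone $S(p,C')$. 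This settles $(0,1)$, $(0,2)$, $(0,3)$: the pencil of lines through $p$ in a plane $\La$, namely $\sigma_{n-2,n-3}(p,\La)$; the cone over a conic; and the cone over a twisted cubic, matching (1), the first case of (2), and the first case of (3).

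For $a\ge 1$, the types $(1,1)$ and $(1,2)$, there is no fixed point. The maximal-degree summand $\cO(-a)$ is saturated in $\cE$ (the quotient $\cO(-b)$ being a line bundle), and since $\cE$ is saturated in $\cO^{\oplus n}$ it is saturated there too; it therefore defines a base-point-free degree-$a$ directrix, which is a line when $a=1$. One then produces the complementary directrix of degree $b$ inside $\Sigma_C$, exhibiting $\Sigma_C$ as the balanced scroll $S(\ell_0,\ell_1)$ spanned by two lines when $d=2$, and as $S(\ell,C'')$ for a line $\ell$ and a conic $C''$ when $d=3$. This matches the second cases of (2) and (3), completing the classification.

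The main obstacle will be this last step: verifying that $\Sigma_C$ is a genuine, nondegenerate rational normal scroll of the stated type rather than a degeneration. Concretely one must check that $g$ is birational onto its image (so that $\deg\Sigma_C=a+b=d$), that the directrices are base-point-free with complementary linear spans, and—crucially in the unbalanced case $(1,2)$—that a conic directrix exists even though the second summand $\cO(-2)\hookrightarrow\cO^{\oplus n}$ need \emph{not} be saturated. These are precisely the classical normal-form facts for rational normal scrolls (cf.\ \cite{GrHa}); the key structural input making them available here is the saturation of the minimal sub-line-bundle of $\cE$ established above, from which the remaining directrix and the scroll structure can be read off.
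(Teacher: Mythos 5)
Your argument is correct, and it is precisely the alternative route that the paper itself sketches in the Remark immediately following its proof of this proposition: pull back the tautological subbundle along $f\colon\PP^1\to Gr(2,n)$, split it by Grothendieck's theorem, and read off the splitting type $(a,b)$ with $a+b=d$. The paper's \emph{official} proof obtains the same decomposition $d=d_0+d_1$ by a more hands-on construction: it chooses a hyperplane $\Pi\subset\PP^{n-1}$ transverse to every line of the family, uses $f^{-1}(\Pi)$ to produce a section $C_0$ of the universal $\PP^1$-bundle $F\to C$, writes $F=\PP(\cO_C\oplus N)$, and extracts the two directrices from the two tautological sections; the bookkeeping $d=d_0+d_1$ is then done via the Pl\"ucker coordinates $a_ib_j-a_jb_i\in H^0(C,L_0\otimes L_1)$. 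Your version is cleaner and makes the positivity $a,b\ge 0$ transparent (global generation of $\cE^\vee$), whereas the paper gets it for free from the explicit section. Two small points. First, the difficulty you flag at the end about the second summand $\cO(-b)\hookrightarrow\cO^{\oplus n}$ possibly failing to be saturated does not arise: $\cO(-b)$ is a direct summand of $\cE$ with line-bundle quotient $\cO(-a)$, and $\cE$ is saturated in $\cO^{\oplus n}$, so the quotient $\cO^{\oplus n}/\cO(-b)$ is an extension of locally free sheaves and hence locally free; thus both directrix systems are base-point-free of degrees $a$ and $b$. Second, what genuinely remains in both your argument and the paper's is ruling out degenerate directrices (e.g.\ the degree-$b$ map being a multiple cover of a line, or $\ell_0,\ell_1$ meeting); in the cone cases $(0,b)$ this follows from injectivity of $C\hookrightarrow G$, since two parameters with the same image point on $C'$ would give the same line through $p$. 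The paper's proof is no more careful on this point than yours, so this is not a gap relative to the standard you are being compared against.
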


\begin{proof}
If $\ell$ is a line in $\PP^{n-1}$, the locus of hyperplanes in $\PP^{n-1}$ containing $\ell$ is isomorphic to $\PP^{n-3}\subset \PP^{(n-1)*}$. Thus the locus of hyperplanes in $\PP^{n-1}$ containing a line in the family parameterized by the curve $C$ has dimension at most $n-2<n-1=\mathrm{dim}\PP^{(n-1)*}$. Let us choose a point $[\Pi]\in \PP^{(n-1)*}$ of the complement of this locus.
Then $\Pi \subset \PP^{n-1}$ meets each line in the family parameterized by $C$ transversely (\cite[Chapter I, Theorem 7.1]{Hart}). 
%So we can choose a hyperplane $\Pi$ in $\PP^{n-1}$ which meets each line in the family parameterized by $C$ transversely by \cite[Chapter I, Theorem 7.1]{Hart}. 
Let $C\leftarrow F\stackrel{f}{\rightarrow} \PP^{n-1}$ denote the family of lines parameterized by $C$ where $\pi:F\to C$ is a $\PP^1$-bundle and  $f|_{\pi^{-1}(x)}$ is the embedding of the line represented by $x$ so that we have a Cartesian diagram
\[\xymatrix{
f^{-1}(\Pi)\ar[r]\ar[d] & \Pi\ar[d]\\
F\ar[d]\ar[r]^f & \PP^{n-1}\\
C}\]
Since $\pi^{-1}(x)$ intersects with $\Pi$ transversely, locally we can write the bijective map $f^{-1}(\Pi)\to F\to C$ as
$$\{(z_1,z_2)\,|\,z_2=g(z_1)\}\subset \CC^2 \to \CC, \quad (z_1,z_2)\mapsto z_1$$
and find that $C_0:=f^{-1}(\Pi)$ is the image of a section $s_0:C\to F$. Let $N_{C_0/F}$ be the normal bundle to $C_0$ in $F$. Then $F=\PP(\cO_C\oplus N)$ where $N=s_0^*N_{C_0/F}$. Let $s_1:C\cong \PP N \hookrightarrow \PP(\cO_{C}\oplus N)=F$. Let $L_0=(f\circ s_0)^*\cO_{\PP^{n-1}}(1)$ and $L_1=(f\circ s_1)^*\cO_{\PP^{n-1}}(1)$ so that the morphism $f\circ s_0:C\to \PP^{n-1}$ is $(a_0:a_1:\cdots:a_{n-1})$ for $a_i\in H^0(C,L_0)$ while $f\circ s_1:C\to \PP^{n-1}$ is $(b_0:b_1:\cdots:b_{n-1})$ for $b_i\in H^0(C,L_1)$.
Then the family $F$ of lines can be represented by two dimensional subspaces of $\CC^n$ spanned by the rows of
\[\left(\begin{matrix}
a_0&a_1&a_2&a_3&\cdots&a_{n-1}\\
b_0&b_1&b_2&b_3&\cdots&b_{n-1}
\end{matrix} \right)\]
and hence the Pl\"ucker coordinates for $C\subset Gr(2,n)\subset \PP^{\binom{n}{2}-1}$ are $a_ib_j-a_jb_i\in H^0(C,L_0\otimes L_1)$. Therefore, we find that the degree of $C$ is $$d=d_0+d_1,\quad \text{where } d_0=\deg L_0\text{ and } d_1=\deg L_1.$$

If $d=1$, then $C\cong \PP^1$ and, without loss of generality, $L_0=\cO_{\PP^1}$ and $L_1=\cO_{\PP^1}(1)$.  If we let $p=(a_0:a_1:\cdots:a_{n-1})$ and $\La$ be the plane spanned by $p$, $(b_0(0):\cdots:b_{n-1}(0))$ and $(b_0(1):\cdots:b_{n-1}(1))$, we obtain (1)

Let $d=2$ so that $C\cong \PP^1$. If $d_0=0$ and $d_1=2$, letting $p=(a_0:a_1:\cdots:a_{n-1})$ and $C'=\{(b_0(t):\cdots:b_{n-1}(t))\,|\,t\in \PP^1\}$, we find that $C$ is the variety of lines passing through $p$ and points in the conic $C'$. If $d_0=1$ and $d_1=1$, both the images $\ell_0:=f\circ s_0(C)$ and $\ell_1:=f\circ s_1(C)$ are lines and $C$ parameterizes lines through a point moving in $\ell_0$ and another point moving in $\ell_1$.

The case for $d=3$ is similar.
\end{proof}
\begin{rema}
We could also prove Proposition \ref{G1} by using Grothendieck's theorem which says that any vector bundle on $\PP^1$ splits into the direct sum of line bundles. If $f:\PP^1\to G=Gr(2,n)$ is a morphism of degree $d$, the pullback of the (dual) universal bundle gives a surjective homomorphism \[\varphi:\cO_{\PP^1}^{\oplus n}\to \cO_{\PP^1}(d_1)\oplus \cO_{\PP^1}(d_2)\]over $\PP^1$ with $d=d_1+d_2, d_1\le d_2$.

When $d=1$, we have $d_1=0$ and $d_2=1$. The composition $\pi_1\circ\varphi:\cO^{\oplus n}\to \cO$ of $\varphi$ with the projection onto the first factor is surjective and gives us the point $p\in \PP^{n-1}$ while that with the second projection $\pi_2\circ \varphi$ gives a line in $\PP^{n-1}$. We thus obtain Proposition \ref{G1} (1).

When $d=2$, we have $(d_1,d_2)=(0,2)$ or $(1,1)$. In the first case, $\pi_1\circ\varphi$ gives a  point $p$ while $\pi_2\circ\varphi$ gives a conic $C'$. In the second case, both $\pi_1\circ\varphi$ and $\pi_2\circ\varphi$ give us lines.

When $d=3$, we have $(d_1,d_2)=(0,3)$ or $(1,2)$. In the first case, $\pi_1\circ\varphi$ gives a  point $p$ while $\pi_2\circ\varphi$ gives a twisted cubic $C'$. In the second case,  $\pi_1\circ\varphi$ gives a  line  $\ell$ while $\pi_2\circ\varphi$ gives a conic $C'$.
\end{rema}
As a consequence of Proposition \ref{G1}, we obtain the following characterization.
\begin{prop}\label{H1}
(1) (\cite[Exercise 6.9]{Harris}) The variety $R_1(G)$ of lines in $G=Gr(2,n)$ is the flag variety $Gr(1,3,n)$ of $V_1\subset V_3\subset \CC^n$ with $\dim V_i=i$.

(2) For any smooth conic $C\subset G\subset \PP^{\binom{n}{2}-1}$, there is a unique three dimensional subspace $\PP^3\subset \PP^{\binom{n}{2}-1}$ that contains all the lines in $\PP^{n-1}$ parameterized by $C$.

(3) For a general twisted cubic $C\subset G\subset \PP^{\binom{n}{2}-1}$, there is a line $\ell\subset \PP^{n-1}$ which meets all the lines in $\PP^{n-1}$ parameterized by $C$.
\end{prop}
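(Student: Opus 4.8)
The plan is to read all three assertions off the scroll classification of Proposition~\ref{G1}: in each case the family of lines is already displayed as the ruling of an explicit scroll, so the point, plane, three-space, or transversal line is encoded in that data, and the work is to extract it intrinsically and to verify uniqueness or incidence. For (1), Proposition~\ref{G1}(1) identifies a line of $R_1(G)$ with a pencil $\sigma_{n-2,n-3}(p,\Lambda)$, i.e.\ with the lines of a plane $\Lambda=\PP(V_3)$ through a point $p=\PP(V_1)$ for a flag $V_1\subset V_3\subset\CC^n$. I would make the assignment $C\mapsto(V_1\subset V_3)$ intrinsic by recovering $V_1=\bigcap_{[\ell]\in C}\ell$ and $V_3=\sum_{[\ell]\in C}\ell$ as the common point and the span of the $2$-planes parameterized by $C$; conversely a flag determines the pencil $\{\,U\mid V_1\subset U\subset V_3\,\}\cong\PP(V_3/V_1)\cong\PP^1\subset G$. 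These constructions are algebraic in families and mutually inverse, yielding the isomorphism $R_1(G)\cong Gr(1,3,n)$ of \cite[Exercise~6.9]{Harris}.

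For (2), Proposition~\ref{G1}(2) presents the lines of a smooth conic $C$ as the ruling of a scroll $S(p,C')$ or $S(\ell_0,\ell_1)$, and I set $W=\sum_{[\ell]\in C}\ell\subset\CC^n$, equal to $\langle p\rangle+\langle C'\rangle$ or $\langle\ell_0\rangle+\langle\ell_1\rangle$ respectively. Every line of the ruling lies in $\PP(W)\subseteq\PP^{n-1}$, and since $C$ is not a single point one has $3\le\dim W\le 4$; a containing $\PP^3$ thus exists, and it is unique exactly when $\dim W=4$, in which case $\PP^3=\PP(W)$ since any such three-space must contain the span $\PP(W)$. The substance is to show $\dim W=4$ for a general conic. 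I would pin down the exceptional locus $\dim W=3$ as precisely the case where the vertex $p$ lies in $\langle C'\rangle$, or where $\ell_0,\ell_1$ are coplanar, equivalently where $C$ lies in a sub-Grassmannian $Gr(2,3)=\sigma_{n-3,n-3}\subset G$ (its lines sweeping out a single plane of $\PP^{n-1}$), so that off this closed locus the three-space is unique.

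For (3), Proposition~\ref{G1}(3) exhibits a general twisted cubic $C$ as the ruling of the scroll $S(\ell,C'')$ attached to a line $\ell$ and a conic $C''$, the balanced splitting $(d_0,d_1)=(1,2)$ being the generic one. By construction each ruling line joins a point of $\ell$ to a point of $C''$, so $\ell$ meets every line parameterized by $C$ and is the required transversal; in the special stratum $S(p,C')$ the lines instead concur at $p$, so any line through $p$ is a transversal, but this is not the general cubic. The main obstacle is the dimension count underlying (2): verifying that the span of the $2$-planes is exactly four-dimensional for a general smooth conic, and hence that the three-space is unique, while carefully separating out the degenerate coplanar scrolls where it drops to a plane. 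Once the scroll is in hand, the flag in (1) and the transversal in (3) are immediate.
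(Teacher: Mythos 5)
Your proposal follows the same route as the paper's own proof: all three parts are read off the scroll classification of Proposition \ref{G1}, with (1) given by the flag $V_1\subset V_3$ recovered from the pencil, (2) by the span of the ruling, and (3) by the line $\ell$ of the scroll $S(\ell,C'')$. The one place you go beyond the paper is the uniqueness analysis in (2): the paper's proof only exhibits \emph{a} containing $\PP^3$ and never addresses uniqueness, whereas you correctly isolate the locus where the span $W$ of the $2$-planes drops to dimension $3$, namely the smooth conics lying in a $\sigma_{2,2}$-plane $Gr(2,3)\subset G$ (the scrolls $S(\ell_0,\ell_1)$ with $\ell_0,\ell_1$ coplanar). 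For those conics the containing $\PP^3$ is \emph{not} unique once $n\ge 5$, so the statement's ``for any smooth conic'' should really read ``for any smooth conic not contained in a $\sigma_{2,2}$-plane''; this is harmless for the paper, since Corollary \ref{H2}(2) only uses the envelope as a rational map and Proposition \ref{bicom} later treats the $\sigma_{2,2}$-conics as a separate blow-up locus, but your version of (2) is the more accurate one. One small correction to your description of the exceptional locus: the other degeneration you list, $p\in\langle C'\rangle$ in the scroll $S(p,C')$, does not actually occur for a smooth conic in $G$, because the assignment $q\mapsto\overline{pq}$ is then $2{:}1$ onto a pencil and the image in $G$ is a line rather than a conic; so the coplanar two-line scrolls are the only genuine exception.
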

\begin{proof}
By Proposition \ref{G1} (1), each line in $G$ parameterizes the family of lines in a plane $\PP^2\subset \PP^{n-1}$ passing through a point $p\in \PP^2$. Conversely, such a family of lines in $\PP^{n-1}$ gives a line in $G$.

By Proposition \ref{G1} (2), each conic $C$ in $G$ parameterizes lines joining a point $p$ and points in a conic $C'$, or points in a line $\ell_0$ and another line $\ell_1$. If we choose a $\PP^3$ containing $p$ and $C'$ or $\ell_0$ and $\ell_1$, then all the lines in the family parameterized by $C$ are contained in the $\PP^3$.

By Proposition \ref{G1} (3), each twisted cubic $C$ in $G$ parameterizes lines joining a point $p$ and points in a twisted cubic $C'$ or points in a line $\ell$ and a conic $C''$. If we choose any line through $p$ meeting with the cubic $C'$ in the first case or the line $\ell$ in the second case, we find that all the lines in the family parameterized by $C$ meet $\ell$.
\end{proof}

\begin{defi}\label{H3}
(1) We call the point $p$ in Proposition \ref{G1} (1), the {\emph{vertex}} of the family of lines parameterized by the line.

(2) We call the three dimensional subspace $\PP^3$ in Proposition \ref{H1} (2), the {\emph{envelope}} of the family of lines parametrized by the conic $C$.

(3) We call the line $\ell$ in Proposition \ref{H1} (3), the {\emph{axis}} of the family of lines parametrized by the twisted cubic $C$.
\end{defi}

\begin{coro}\label{H2}(cf. \cite[Exercise 6.9]{Harris} and \cite{CC10})
Let $R_d(G)$ denote the moduli space of smooth rational curves of degree $d\le 3$ in $G=Gr(2,n)$ with $n\ge 4$.

(1) There is a morphism $\eta_1:R_1(G)\to \PP^{n-1}$ which assigns the vertex to each line in $G$. Each fiber is isomorphic to $Gr(2,n-1)$.

(2) There is a rational map $\eta_2:R_2(G)\dashrightarrow Gr(4,n)$ which assigns the envelope to each smooth conic in $G$. A general fiber is isomorphic to the moduli space $R_2(\sigma_{n-4,n-4})$ of conics in the Schubert cycle $\sigma_{n-4,n-4}(\cong Gr(2,4))$ in Definition \ref {H4}.

(3) There is a rational map $\eta_3:R_3(G)\dashrightarrow Gr(2,n)$ which assigns the axis to each twisted cubic curve in $G$. A general fiber is isomorphic to the moduli space $R_3(\sigma_{n-3,0})$ of smooth twisted cubics in the Schubert cycle $\sigma_{n-3,0}$ in Definition \ref{H4}.
\end{coro}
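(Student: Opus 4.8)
The plan is to build each map directly from the geometric data attached to a rational curve in Proposition \ref{G1} and Definition \ref{H3}, and then to compute the fibers by fixing the target datum and recognizing the curves lying over it as rational curves inside one of the Schubert cycles of Definition \ref{H4}.

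First I would treat $\eta_1$. By Proposition \ref{G1}(1) every line $C\subset G$ is the pencil of lines in a plane $\Lambda$ through a point $p$, and $p$ is recovered from $C$ as the common point of the lines of the pencil (equivalently, as the image of the degree-$0$ section $s_0$ in the proof of Proposition \ref{G1}, where $L_0=\cO_{\PP^1}$ forces $(a_0:\cdots:a_{n-1})$ to be constant). Since $p$ is determined for \emph{every} line and varies algebraically in families, $\eta_1$ is a genuine morphism. To compute the fiber over $p$, note that a line with vertex $p$ is exactly the datum of the plane $\Lambda\supset p$, and planes $\PP^2\subset\PP^{n-1}$ through $p$ correspond to $2$-dimensional subspaces of $\CC^n/\langle p\rangle\cong\CC^{n-1}$; hence $\eta_1^{-1}(p)\cong Gr(2,n-1)$.

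Next, $\eta_2$ and $\eta_3$ are only rational because the relevant invariant degenerates on a proper closed subset. For $\eta_2$, Proposition \ref{H1}(2) produces the envelope $\PP^3$, giving a subspace $V_4\in Gr(4,n)$, but only when the lines of the ruling genuinely span a $\PP^3$; on the locus where the conic lies in a sub-$Gr(2,3)=\PP^2$ of $G$ (so that its lines fill only a plane) the envelope drops dimension and $\eta_2$ is undefined. This locus is proper closed for $n\ge 5$, so $\eta_2$ is a rational map. Fixing $V_4$, a smooth conic maps to it exactly when all its lines lie in the corresponding $\PP^3$, i.e. when $C$ is a conic in $\{\ell\subset\PP^3\}=\sigma_{n-4,n-4}\cong Gr(2,4)$; a general such conic has this $\PP^3$ as its honest envelope, so the general fiber is $R_2(\sigma_{n-4,n-4})$. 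Similarly, Proposition \ref{G1}(3) exhibits two types of twisted cubic: the cone $S(p,C')$, whose rulings all pass through $p$ so that every line through $p$ is an ``axis'' (the axis is non-unique and $\eta_3$ is undefined), and the generic type $S(\ell,C'')$, for which $\ell$ is the unique line meeting all rulings. Thus $\eta_3$ is defined away from the cone locus, and over a fixed axis $\ell$ the cubics mapping to it are precisely those whose lines all meet $\ell$, i.e. twisted cubics in $\sigma_{n-3,0}=\{\ell'\mid\ell'\cap\ell\ne\emptyset\}$; the general fiber is $R_3(\sigma_{n-3,0})$.

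The steps requiring the most care are the two ``converse'' identifications for the fibers together with the determination of the generic type of curve. For each target value one must check not only that curves over it lie in the stated Schubert cycle, but also that a general curve in that cycle realizes the prescribed datum as its genuine envelope (resp. axis), rather than degenerating so that the datum becomes smaller or non-unique; this is what makes the fiber equal to the full moduli space $R_d$ of the cycle. For $\eta_3$ the main obstacle is to confirm that the cone locus $S(p,C')$ is a proper closed subset—so that the generic twisted cubic is of type $S(\ell,C'')$ with a well-defined unique axis—and hence that $\eta_3$ is dominant with the claimed general fiber; I expect this to follow from a dimension count comparing the two families in Proposition \ref{G1}(3) together with the uniqueness of $\ell$ for the scroll $S(\ell,C'')$.
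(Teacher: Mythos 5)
Your proposal is correct and follows essentially the same route as the paper: both define $\eta_d$ via the vertex/envelope/axis coming from Proposition \ref{G1}, Proposition \ref{H1} and Definition \ref{H3}, and identify the fibers as $Gr(2,n-1)$ (planes through the vertex), $R_2(\sigma_{n-4,n-4})$ and $R_3(\sigma_{n-3,0})$. If anything, you are more explicit than the paper about the indeterminacy loci of $\eta_2$ and $\eta_3$ (planar conics, cone-type cubics with non-unique axis), which the paper's one-line proof handles only implicitly through the word ``general''.
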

\begin{proof}
(1) By Proposition \ref{H1} (1), $\eta_1$ is the forgetful map $R_1(G)=Gr(1,3,n)\to Gr(1,n)$ defined by $(V_1,V_3)\to V_1$. The choice of $V_3/V_1$ in $\CC^n/V_1$ is parameterized by $Gr(2,n-1)$.

(2) General conic $C$ is given by $(p,C')$ or $(\ell_0,\ell_1)$ in the notation of Proposition \ref{G1} (2)  which span a unique $\PP^3\subset \PP^{n-1}$. In this case $C\in R_2(\sigma_{n-4,n-4})$.

(3) General twisted cubic $C$ is given by $(\ell, C'')$ in the notation of Proposition \ref{G1} (3). Lines meeting $\ell$ form the Schubert cycle $\sigma_{n-3,0}$.
\end{proof}

\section{Rationality of moduli spaces of rational curves in linear sections of Grassmannian}\label{sramo}

Before embarking on the study of the birational geometry of moduli spaces of rational curves in Grassmannians, we determine their birational types.
The purpose of this section is to prove the following rationality result.

\begin{theo}\label{H14} Let $G=Gr(2,5)$. Let $Y=Y_5^m$ be the intersection of $Gr(2,5)\subset \PP^{\binom{5}{2}-1}$ with $6-m$ general hyperplanes. Then the moduli spaces $R_d(Y)$ of smooth rational curves of degree $d$ on $Y$ are all rational for $d\le 3$.
\end{theo}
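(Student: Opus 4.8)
The plan is to treat the three degrees uniformly. In each case the \emph{vertex}, \emph{envelope}, or \emph{axis} construction of Definition \ref{H3} produces a dominant rational map $\eta_d$ from $R_d(Y)$ to a rational variety $B_d$ (namely $B_1=B_2=\PP^4$ and $B_3=Gr(2,5)$), and it suffices to identify $R_d(Y)$ birationally with a Grassmann bundle over $B_d$ associated to a natural vector bundle assembled from the $6-m$ alternating forms $\omega_1,\dots,\omega_{6-m}$ cutting out $Y$. Since the Grassmann bundle $Gr(k,\cE)$ of a vector bundle $\cE$ over a base $B$ is Zariski locally trivial, it is birational to $B\times Gr(k,\mathrm{rk}\,\cE)$; when $B$ is rational this is rational, which will give the theorem. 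Throughout I fix the identification $(\wedge^2\CC^5)^{*}\cong\wedge^2(\CC^5)^{*}$, so that each linear form cutting $Y$ is an alternating form $\omega_k$ on $\CC^5$ and a Plücker point $[u\wedge v]$ lies in $Y$ iff $\omega_k(u,v)=0$ for all $k$.

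For $d=1$, recall from Corollary \ref{H2}(1) that a line in $G$ with vertex $[v]$ is the pencil of lines through $v$ inside a plane $\langle v\rangle\subset V_3\subset\CC^5$; it lies in $Y$ exactly when $V_3\subset W_v:=\{u\in\CC^5:\omega_k(v,u)=0\ \forall k\}$. For general $[v]$ the forms $\omega_k(v,\cdot)$ are independent, so $\dim W_v=m-1$ and the fibre $\eta_1^{-1}([v])$ is $Gr(2,W_v/\langle v\rangle)\cong Gr(2,m-2)$. As $[v]$ varies over $\PP^4=\PP(\CC^5)$ the $W_v$ glue into a rank-$(m-1)$ bundle $\cW$ containing the tautological $\cO(-1)$, so $R_1(Y)$ is birational to the Grassmann bundle $Gr(2,\cW/\cO(-1))$ over $\PP^4$ and is rational whenever $m\ge4$. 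The degenerate values $m=3$ (where $R_1(Y_5^3)\cong\PP^2$ by \eqref{sI1}, the general $W_v$ being too small) and $m=2$ (a quintic del Pezzo surface with its ten lines) are classical and handled directly.

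For $d=2$, the envelope of a conic $C\subset Y$ is a hyperplane $\PP(V_4)\subset\PP^4$, so $\eta_2$ lands in $Gr(4,5)\cong\PP^4$, and $C$ lies in the quadric $Q_{V_4}:=Gr(2,V_4)\cap Y$, the Plücker quadric of $Gr(2,V_4)\subset\PP(\wedge^2V_4)$ cut by the $\omega_k$; for general $V_4$ this is a smooth $(m-2)$-dimensional quadric in an $m$-dimensional linear space $L_{V_4}\subset\wedge^2V_4$. The span map $C\mapsto\langle C\rangle$ identifies $R_2(Q_{V_4})$ birationally with $Gr(3,L_{V_4})$, with inverse $\Pi\mapsto\Pi\cap Q_{V_4}$, so assembling the $L_{V_4}$ into a rank-$m$ bundle $\cL$ over $\PP^4$ exhibits $R_2(Y)$ as birational to $Gr(3,\cL)$, hence rational for $m\ge3$. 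This reproduces the fibre types $Gr(3,m)$ matching the $\PP^4$, $\PP^3$-bundle and $Gr(3,5)$-bundle descriptions of \S\ref{sy35}--\S\ref{sy55}; the surface case $m=2$ is again separate.

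The case $d=3$ is the main obstacle. Here $\eta_3$ sends a twisted cubic to its \emph{axis} $\ell\in Gr(2,5)$, which by Proposition \ref{G1}(3) is the directrix line of the cubic scroll $S(\ell,C'')$ it parameterizes. My plan is to fix $\ell$, write the cubic in the matrix form of the proof of Proposition \ref{G1} with $A\in H^0(\cO_{\PP^1}(1))^{\oplus5}$ spanning $\ell$ and $B\in H^0(\cO_{\PP^1}(2))^{\oplus5}$ recording $C''$, and note that the containment $C\subset Y$ is the system of \emph{linear} equations $\omega_k(A(t),B(t))\equiv0$ in the unknown $B$; thus $\eta_3^{-1}(\ell)$ is, modulo the reparameterization action of $\mathrm{Aut}(\PP^1)$ together with the residual row and scaling symmetries, an open subset of a linear space. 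The delicate points, which I expect to be the crux, are: (i) proving this quotient is \emph{rational} rather than merely unirational, via a slice or the no-name lemma after rigidifying the parameterization; (ii) organizing the fibres into an honest bundle over the rational base $Gr(2,5)$ so as to pass from fibrewise to global rationality; and (iii) checking that the special scroll type $S(p,C')$, where the axis is not even unique, sweeps out only a lower-dimensional locus and so does not affect the birational type. Carrying (i)--(iii) out uniformly in $m$, with the small value $m=3$ (where $R_3(Y_5^3)\cong Gr(2,5)$ by \eqref{sI1}) verified against the bundle picture, completes the proof.
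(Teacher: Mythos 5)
Your treatment of $d=1$ and $d=2$ coincides in substance with the paper's: the vertex and envelope maps with fibers $Gr(2,W_v/\langle v\rangle)$ and $Gr(3,L_{V_4})$ are exactly the content of Lemmas \ref{H6}--\ref{H8}, and the degenerate cases $m=2,3$ are disposed of by Lemma \ref{H5} and Proposition \ref{H11}. The problem is $d=3$, where you offer only a plan and explicitly defer the three points (i)--(iii) that you yourself call the crux; as written this does not prove rationality in degree $3$. In particular, the quotient of the linear space of data $B\in H^0(\cO_{\PP^1}(2))^{\oplus 5}$ by $\mathrm{Aut}(\PP^1)$ together with the row operations $B\mapsto \lambda B+cA$ is a quotient by a non-reductive-free $PGL_2$-type action, and rationality (as opposed to unirationality) of such quotients is a genuinely delicate matter for which you give no argument. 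The paper sidesteps this entirely by running the same ``linear span'' trick you use for conics one level up: by explicit coordinate computation, the Schubert cycle $\sigma_{2,0}(\ell)$ of lines meeting the axis $\ell$ spans a $\PP^6\subset\PP^9$ and is cut out there by three quadrics, so a general twisted cubic in $\sigma_{2,0}(\ell)\cap Y$ is the intersection of $\sigma_{2,0}(\ell)$ with a general $\PP^3$ inside the $\PP^{m}=\PP^6\cap H_1\cap\dots\cap H_{6-m}$. Hence the general fiber of $\eta_3$ is birational to $Gr(4,m+1)$, and $R_3(Y^m_5)$ is birational to a relative Grassmannian $Gr(4,\cK_{m+1})$ over $Gr(2,5)$ --- rational with no group quotient to analyze. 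That is the idea your proposal is missing, and without it (or a substitute resolving (i) and (ii)) the degree-$3$ case is open.

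A second, smaller gap: to pass from ``the locus where $\eta_d$ is defined with the stated fibers is birational to a Grassmann bundle'' to ``$R_d(Y)$ is rational,'' you need $R_d(Y^m_5)$ to be \emph{irreducible}, so that this locus is dense in all of $R_d(Y)$ rather than in one component. The paper proves this separately (Corollary \ref{H12}) by an incidence-variety argument over $Gr(13-m,10)$ that reduces to the known irreducibility of $R_d(Y^3_5)$. Your proposal is silent on this, and it is not automatic --- your own point (iii) about the locus of scrolls $S(p,C')$ with non-unique axis is a symptom of exactly this issue.
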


When $m=0$, $Y^0_5$ consists of five points since the degree of $G$ is $5$. When $m=1$, $Y^1_5$ is a smooth elliptic curve of degree $5$ and hence there is no rational curve in $Y^1_5$.
%When $m=1$, $Y^1_5$ is a plane curve of degree $5$ and hence there is no rational curve in $Y^1_5$.
\begin{lemm}\label{H5}
\begin{enumerate}
\item $R_1(Y^2_5)$ consists of 10 reduced points;
\item $R_2(Y^2_5)$ is the disjoint union of five copies of $\PP^1-\{0,1,\infty\}$;
\item $R_3(Y^2_5)$ is the disjoint union of four copies of $\PP^2-\PP^1$ and $\PP^2$ minus four lines;
\item there are no planes in $Y^2_5$.
\end{enumerate}
\end{lemm}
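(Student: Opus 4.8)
The plan is to identify $Y=Y^2_5$ with the quintic del Pezzo surface and to read each moduli space off from the numerical classification of smooth rational curves on it. As the intersection of $Gr(2,5)\subset\PP^9$ with four general hyperplanes, $Y$ is a smooth surface in $\PP^5$, and adjunction gives $K_Y=(K_{Gr(2,5)}+4\cO(1))|_Y=\cO_Y(-1)$; hence $\cO_Y(1)$ is the anticanonical class, $(-K_Y)^2=5$, and $Y$ is isomorphic to the blow-up $S$ of $\PP^2$ at four general points $p_1,\dots,p_4$, with $-K_S=\cO_S(1)$ the Pl\"ucker (hyperplane) class. Writing $H,E_1,\dots,E_4$ for the standard basis of $\mathrm{Pic}(S)$, a degree-$d$ rational curve is a smooth rational $C$ with $(-K_S)\cdot C=d$, and adjunction forces $C^2=d-2$. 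The first step is therefore to list, for each $d\le 3$, the classes with these invariants that carry an irreducible smooth member, and then to analyze the corresponding linear systems.

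The extreme cases are immediate. For $d=1$ one needs $C^2=-1$, i.e. a $(-1)$-curve; on $S$ these are exactly $E_1,\dots,E_4$ and the six classes $H-E_i-E_j$, so there are ten rigid classes and $R_1(Y^2_5)$ is ten reduced points, proving (1). For (4), a $2$-plane in $\PP^5$ has degree $1$, so were it contained in the irreducible degree-$5$ surface $Y$ it would have to equal $Y$; this is absurd, so $Y^2_5$ contains no plane.

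For $d=2$ one needs $C^2=0$, and the classes are the four fibre classes $H-E_i$ together with $2H-E_1-E_2-E_3-E_4$, each defining a conic-bundle structure $S\to\PP^1$, that is, a pencil $|C|\cong\PP^1$. The smooth conics are precisely the irreducible fibres, so the task is to count the reducible ones, and a topological Euler number computation does this uniformly: $e(S)=e(\PP^2)+4=7$, while a conic bundle over $\PP^1$ with $s$ reducible fibres has $e(S)=4+s$, whence $s=3$ in every pencil. Thus the locus of smooth conics in each pencil is $\PP^1$ minus three points, i.e. $\PP^1-\{0,1,\infty\}$, and there are five pencils; this is (2).

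For $d=3$ one needs $C^2=1$, and the classes are $H$ together with the four classes $2H-E_i-E_j-E_k$; each has $h^0=3$ (Riemann--Roch, since $-K_S$ is the polarization and $C^2=1$), so each complete linear system is a net $|C|\cong\PP^2$, and $R_3(Y^2_5)$ is the disjoint union of the loci of irreducible smooth members of these five nets. The class $H$ is handled directly: its members are the pullbacks of lines in $\PP^2$ under the blow-down $\pi\colon S\to\PP^2$, and such a member is an irreducible smooth cubic exactly when the line avoids $p_1,\dots,p_4$; the four pencils of lines through the $p_i$ sweep out four lines in the dual plane $|H|\cong\PP^2$, so this stratum is $\PP^2$ minus four lines. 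For each class $2H-E_i-E_j-E_k$ the members are the conics through three of the four base points, and here I would carry out the analogous degeneracy analysis inside $|C|\cong\PP^2$, identifying both the reducible conics (the discriminant of the net) and the conics acquiring the remaining base point, so as to extract the asserted open stratum. The main obstacle is exactly this bookkeeping of degenerate members in the cubic nets; once each of the five strata has been pinned down, assembling them proves (3) and completes the lemma.
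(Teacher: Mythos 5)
Your identification of $Y^2_5$ with the quintic del Pezzo surface $S=\mathrm{Bl}_{p_1,\dots,p_4}\PP^2$ polarized by $-K_S$, and the use of adjunction to reduce each part to listing classes with $(-K_S)\cdot C=d$ and $C^2=d-2$, is exactly the paper's route. Parts (1), (2) and (4) are complete and correct: the ten $(-1)$-classes, the five conic pencils with three degenerate members each (your Euler-number count $e(S)=4+s=7$ is a clean way to get $s=3$; the paper instead names the degenerate members explicitly), and the degree obstruction to a plane all check out.

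The gap is in part (3). For the four classes $2H-E_i-E_j-E_k$ you only announce that you ``would carry out the analogous degeneracy analysis''; this is precisely the step that must actually be done, and it is not innocuous bookkeeping. Inside the net $|2H-E_i-E_j-E_k|\cong\PP^2$ of conics through $p_i,p_j,p_k$ two loci have to be deleted: the conics also passing through the fourth point $p_l$ (one line of the net; their proper transforms have class $2H-E_1-E_2-E_3-E_4$, hence degree $2$, the corresponding member of the net being the reducible divisor $\widetilde{C}+E_l$), and the reducible conics $\overline{p_ip_j}\cup\ell$ with $p_k\in\ell$ together with its two permutations (three lines of the net forming a triangle), whose proper transforms are connected nodal curves rather than smooth rational curves. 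The honest output of your deferred analysis is therefore $\PP^2$ minus four lines for each of these four components --- the same shape as your $|H|$-component --- whereas the statement records them as $\PP^2-\PP^1$, and the paper's own proof removes only the conics through the fourth blow-up center. So you should finish the computation explicitly and confront this discrepancy rather than leave a promissory note: as written, your argument neither establishes part (3) nor detects that the claimed description of those four components omits the discriminant of the net.
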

\begin{proof}
$Y^2_5$ is a del Pezzo surface of degree five and hence isomorphic to the blowup of $\PP^2$ at four general points. Obviously $Y^2_5$ does not contain any plane.

By adjunction, a line in $Y^2_5$ is a rational curve in $Y^2_5$ with self-intersection number $-1$. There are four exceptional curves and the (proper transforms of) six lines in $\PP^2$ passing through two out of the four blowup centers. Hence there are exactly ten lines in $Y^2_5$.

By adjunction again, a smooth conic in $Y^2_5$ is a rational curve with self-intersection number $0$. These are the (proper transforms of) lines through one of the four blowup centers or conics through the all four blowup centers, minus the lines through two out of the four blowup centers and the three degenerate conics through the four points.

By adjunction, a smooth twisted cubic in $Y^2_5$ is a rational curve with self-intersection number $1$. These are the (proper transforms of) lines in $\PP^2$ not passing through any of the four blow up centers or conics passing through three out of the four blowup centers. The first collection is $\PP^2$ minus four lines while the second is four copies of $\PP^2$ minus a line.
\end{proof}

\begin{prop}\label{H11}(\cite{Fae, FuNa, Ili, San})
The Hilbert schemes $H_d(Y^3_5)$ with Hilbert polynomial $dt+1$ in $Y^3_5$ are isomorphic to
\begin{equation}\label{delp1}H_1(Y^3_5)\cong\PP^2,\quad H_2(Y^3_5)\cong\PP^4,\quad \mathrm{and} \quad H_3(Y^3_5)\cong \mathrm{Gr}(2,5).\end{equation}

In particular, $R_d(Y^3_5)$ for $d\le 3$ are rational.
\end{prop}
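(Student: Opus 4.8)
The plan is to realize each isomorphism in \eqref{delp1} by showing that the vertex map $\eta_1$, the envelope map $\eta_2$ and the axis map $\eta_3$ of Corollary \ref{H2} become isomorphisms of the \emph{entire} Hilbert scheme once restricted to $Y=Y^3_5$. The uniform mechanism is the following: to a point of the target ($\PP^4$, $\PP^4$, $Gr(2,5)$) one attaches a Schubert-type linear slice of $G=Gr(2,5)$, and cutting that slice by the three hyperplanes defining $Y$ produces exactly one curve of the expected type; this curve is then the fibre of $\eta_d$, so $\eta_d$ acquires a section which will turn out to be a two-sided inverse. Concretely, write $Y=G\cap\Lambda$ with $\Lambda\cong\PP^6$ the intersection of the three defining hyperplanes. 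Dually these hyperplanes form a net $\{\omega_\lambda=\lambda_1\omega_1+\lambda_2\omega_2+\lambda_3\omega_3\}_{[\lambda]\in\PP^2}$ of alternating $2$-forms on $\CC^5$, and $[\Lambda']\in G$ lies in $Y$ exactly when $\omega_i|_{\Lambda'}=0$ for $i=1,2,3$. For a generic net the $4\times4$ sub-Pfaffians of $\omega_\lambda$ have no common zero on $\PP^2$, so every $\omega_\lambda$ has rank $4$ and a well-defined kernel line $\ker\omega_\lambda\subset\CC^5$.

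For lines I would rewrite membership in $Y$ through $\eta_1\colon R_1(G)=Gr(1,3,5)\to\PP^4$. A line with vertex $[v]$ and plane $V_3$ (parametrizing the lines of $\PP^4$ through $[v]$ inside $\PP(V_3)$) lies in $Y$ precisely when $\omega_i(v,w)=0$ for all $w\in V_3$ and all $i$, i.e. $V_3\subseteq\ker M_v$, where $M_v\colon\CC^5\to\CC^3$ is $w\mapsto(\omega_i(v,w))_i$. Since $\dim V_3=3$ this forces the three covectors $\omega_i(v,\cdot)$ to be dependent, so $v\in\ker\omega_\lambda$ for some $[\lambda]$; conversely, for a generic net $\mathrm{rank}\,M_v=2$ on this locus, whence $V_3=\ker M_v$ is determined by $v$. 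Assigning to $[\lambda]\in\PP^2$ the line with vertex $\ker\omega_\lambda$ and plane $\ker M_{\ker\omega_\lambda}$ gives a morphism $\PP^2\to R_1(Y)$; I would check it is bijective (injectivity fails only on the deeper locus $\mathrm{rank}\,M_v\le1$, which is empty for a generic net) and conclude, using that lines exhaust $H_1(Y)$ and that $\PP^2$ is smooth, that $H_1(Y^3_5)\cong\PP^2$.

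For conics I would run the same idea through $\eta_2$. Given a hyperplane $\Pi=\PP^3\subset\PP^4$, the sub-Grassmannian $\sigma_{1,1}(\Pi)\cong Gr(2,4)$ of lines contained in $\Pi$ spans a $\PP^5$ in Plücker space and is a quadric fourfold there; intersecting with $\Lambda\cong\PP^6$ cuts this $\PP^5$ in a $\PP^2$, so $\sigma_{1,1}(\Pi)\cap Y=\sigma_{1,1}(\Pi)\cap\Lambda$ is a quadric fourfold met with a plane, namely a conic in $G$ whose ruling lines all lie in $\Pi$. Thus $\Pi\mapsto[\sigma_{1,1}(\Pi)\cap Y]$ defines a morphism $\PP^4\to H_2(Y)$ inverse to the envelope map, giving $H_2(Y^3_5)\cong\PP^4$. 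The twisted-cubic case is parallel and is the part I would take from the cited works \cite{Fae,FuNa,Ili,San}: for a line $\ell\subset\PP^4$ the cycle $\sigma_{2,0}(\ell)$ has dimension $4$ and degree $3$, so $\sigma_{2,0}(\ell)\cap Y=\sigma_{2,0}(\ell)\cap\Lambda$ is a degree-$3$ curve in $G$, which for generic $\ell$ is the unique twisted cubic in $Y$ with axis $\ell$; the resulting section of $\eta_3$ identifies $H_3(Y^3_5)\cong Gr(2,5)$. Rationality of $R_d(Y^3_5)$ for $d\le3$ is then immediate, since $\PP^2$, $\PP^4$ and $Gr(2,5)$ are rational.

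The main difficulty is upgrading these bijections from birational statements to isomorphisms of the full Hilbert schemes, and it splits into two issues. First, one must show the sections $\PP^2\to H_1$, $\PP^4\to H_2$, $Gr(2,5)\to H_3$ are everywhere-defined morphisms and that the vertex, envelope and axis maps extend over the degenerate (reducible or non-reduced) members of $H_d$, so that the two maps are genuinely mutually inverse rather than inverse only over a dense open set; this is where the genericity of the three hyperplanes is used, to forbid jumps in $\dim(\langle\text{slice}\rangle\cap\Lambda)$ and the deeper Pfaffian degeneracies. Second, to pass from a bijective morphism to an isomorphism I would prove $H_d(Y^3_5)$ smooth of the expected dimension $-K_Y\cdot C=2d$ by vanishing of $H^1$ of the normal bundle of a general member, after which a bijective morphism onto a smooth variety is an isomorphism. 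The degree-$3$ case is the most delicate — one must verify that the generic fibre of $\eta_3$ is a single reduced twisted cubic and control the boundary of $H_3$ — and this is exactly the content supplied by Faenzi, Furushima--Nakayama, Iliev and Sanna.
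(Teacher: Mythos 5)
Your proposal is correct and follows essentially the same route as the paper: the paper also cites \cite{Fae,FuNa,Ili,San} for the full statement (in particular for twisted cubics) and reproves the $d=1,2$ cases in \S\ref{sy35} by slicing the Schubert cycles $\sigma_{3,0}(p)$ and $\sigma_{1,1}(\Pi)$ with the three hyperplanes, exactly as you do via the sections of $\eta_1$ and $\eta_2$. Your parametrization of the line locus by the net of skew forms $[\lambda]\mapsto\ker\omega_\lambda$ is just the dual description of the paper's projected Veronese surface (the image of the vertex map, cut out by the $3\times 3$ minors of $M_v$), and your $\PP^2$-slice of $Gr(2,4)$ is the fibre of the paper's rank-$3$ bundle $Gr(3,\cK)$ over $Gr(4,5)$.
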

For the completeness of our argument using in this paper, we deal with the same results about the space of line and conics in $Y_5^3$ (\S \ref{sy35}).
\begin{rema}\label{corr}
The correspondences in \eqref{delp1} are given by the composition map $\eta_d\circ i$ for $i:R_d(Y^3_5)\subset R_d(G)$ and the map $\eta_d$ in Corollary \ref{H2}. Let us provide a geometric description of the fiber of $\eta_d\circ i$ for $d=2,3$ (cf. \cite[\S 1]{Ame04} and \cite[Remark 2.47]{San}). Since $\sigma_{1,1} (\PP^3)\cong Gr(2,4)$ has degree two (Definition \ref{H1}), the intersection $\sigma_{1,1}(\PP^3) \cap H_1\cap H_2\cap H_3$ is a conic in $Y_5^3$. 

Similarly, by an explicit computation with coordinates, we find that $\sigma_{2,0}(\PP^1)$ is contained in a linear subspace $\PP^6$ of $\PP^9$ and $\sigma_{2,0}(\PP^1)$ is defined by three quadric polynomial equations. Thus the intersection $\sigma_{2,0}(\PP^1)$ with $H_1\cap H_2\cap H_3$ is a twisted cubic, which is the fiber of the map $\eta_3\circ i$ (cf. \cite[Proposition 4.5]{IKKR16}).
\end{rema}

\begin{coro}\label{H12}
$R_d(Y^m_5)$ are irreducible for $m\ge 3$ and $d\le 3$.
\end{coro}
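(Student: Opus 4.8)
The plan is to realize each $R_d(Y^m_5)$ as a general fibre of an irreducible incidence variety and to bootstrap irreducibility from the already-established case $m=3$ (Proposition \ref{H11}). Write $k=6-m$ and let $W_k=\bigl((\PP^9)^*\bigr)^{k}$ be the space of ordered $k$-tuples of hyperplanes in $\PP^9=\PP(\wedge^2\CC^5)$, so that a general point of $W_k$ cuts out a smooth $Y^m_5$. I would consider the incidence variety
\[\mathcal I_k=\{(C,H_1,\dots,H_k)\in R_d(G)\times W_k \ :\ C\subset H_j \text{ for } 1\le j\le k\},\]
with its two projections $p\colon \mathcal I_k\to R_d(G)$ and $q\colon \mathcal I_k\to W_k$. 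For a general tuple the fibre of $q$ is exactly $\{C\in R_d(G): C\subset Y^m_5\}=R_d(Y^m_5)$, so it suffices to prove that the general fibre of $q$ is irreducible.

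First I would record that $R_d(G)$ is irreducible for $d\le 3$: this follows from the classification of \S\ref{sG} (or from the irreducibility of the space of degree-$d$ maps $\PP^1\to G$ for the homogeneous variety $G$). Next, every smooth rational curve of degree $d\le 3$ spans a $\PP^d\subset\PP^9$, and the hyperplanes through a fixed $\PP^d$ form a $\PP^{8-d}$; hence $p$ is a Zariski-locally trivial bundle with irreducible fibre $(\PP^{8-d})^{k}$, so $\mathcal I_k$ is irreducible. A dimension count gives $\dim\mathcal I_k-\dim W_k=(5d+3)-k(d+1)=m+(m-1)d-3$, the expected dimension of $R_d(Y^m_5)$, which is nonnegative in our range; together with nonemptiness this shows $q$ is dominant with general fibre equidimensional of the expected dimension.

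The main point is to upgrade ``$\mathcal I_k$ irreducible'' to ``the general fibre of $q$ is irreducible'', i.e. to show that $\CC(W_k)$ is algebraically closed in $\CC(\mathcal I_k)$. For $k=3$ (the case $m=3$) this is precisely the statement that the general fibre $R_d(Y^3_5)$ is irreducible, which holds by Proposition \ref{H11}; thus $\CC(W_3)$ is algebraically closed in $\CC(\mathcal I_3)$. For $m>3$, i.e. $k<3$, I would use the forgetful maps $\mathcal I_3\to\mathcal I_k$ and $W_3\to W_k$ (dropping the last $3-k$ hyperplanes), which form a commutative square. Since $\mathcal I_3\to\mathcal I_k$ is a $(\PP^{8-d})^{3-k}$-bundle, $\CC(\mathcal I_k)$ is algebraically closed in $\CC(\mathcal I_3)$. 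Any $\alpha\in\CC(\mathcal I_k)$ algebraic over $\CC(W_k)$ is then algebraic over $\CC(W_3)$, hence lies in $\CC(W_3)$ by the $m=3$ case, so $\alpha\in\CC(W_3)\cap\CC(\mathcal I_k)$; checking that this intersection equals $\CC(W_k)$ (the dropped hyperplanes move in a positive-dimensional family that imposes nothing on $\CC(\mathcal I_k)$, while the spans $\langle C\rangle$ vary richly enough to force any such $\alpha$ to be independent of them) gives the claim.

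The step I expect to be the genuine obstacle is exactly this last reduction: passing from irreducibility of the total space $\mathcal I_k$ to irreducibility of the general fibre of $q$, equivalently ruling out monodromy that could permute several components of a general $R_d(Y^m_5)$. The role of Proposition \ref{H11} is to kill this monodromy over the deepest stratum $m=3$, and the work lies in transporting that control to smaller $k$. An alternative, possibly cleaner, route to the same conclusion is to prove directly that for general $Y^m_5$ the space $R_d(Y^m_5)$ is smooth (via $H^1(C,N_{C/Y})=0$ for the general curve, using that $Gr(2,5)$ is homogeneous and the sections are general) and connected, since a smooth connected variety is irreducible. In either approach the substance is the transition from the known $m=3$ case to general $m$, with the homogeneity of $Gr(2,5)$ and the generality of the hyperplanes doing the essential work.
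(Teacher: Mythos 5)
Your setup and your diagnosis of where the difficulty lies are both accurate, but the argument as written has a genuine gap exactly at the step you flag: the identity $\CC(W_3)\cap\CC(\mathcal{I}_k)=\CC(W_k)$ inside $\CC(\mathcal{I}_3)$ is asserted with only a heuristic parenthesis, and it is precisely the monodromy-killing statement on which the whole proof hinges. It is in fact provable --- for a general $(H_1,\dots,H_k)$, any two curves $C,C'$ in the corresponding $Y^m_5$ lie in a common tuple $(H_{k+1},\dots,H_3)$, because hyperplanes containing the span of $\langle C\rangle$ and $\langle C'\rangle$ (at most a $\PP^{2d+1}\subset\PP^9$) form a nonempty $\PP^{7-2d}$ for $d\le 3$; evaluating a function $g\in\CC(W_3)$ on such common tuples forces $h(C,\cdot)=h(C',\cdot)$ and hence $h\in\CC(W_k)$ --- but this requires a genuine argument (including care about where the rational functions are defined), not a remark, and without it the proof is incomplete.

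The paper avoids the issue entirely by orienting the incidence correspondence the other way. It fixes $Y^m_5$ and sets $I=\{(C,H)\in R_d(Y^m_5)\times Gr(13-m,10)\,:\,C\subset H\}$, so that the projection to the irreducible Grassmannian of codimension-$(m-3)$ linear subspaces of $\PP^9$ has general fibre $R_d(Y^m_5\cap H)=R_d(Y^3_5)$, which is irreducible by Proposition \ref{H11}; hence $I$ is irreducible. Since every smooth rational curve of degree $d\le3$ spans at most a $\PP^3$ and therefore lies in some such $H$, the other projection $I\to R_d(Y^m_5)$ is surjective, and irreducibility of $R_d(Y^m_5)$ follows. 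Both steps go in the ``easy'' direction (irreducible general fibres over an irreducible base give an irreducible total space; a surjection from an irreducible source gives an irreducible target), so no descent of irreducibility to a general fibre --- and no monodromy argument --- is ever needed. I would recommend recasting your proof in this form rather than trying to close the field-theoretic gap.
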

\begin{proof}
Note that the spaces $Y^m_5$ are isomorphic to each other with respect to the choice of the hyperplane sections $H_i$. Let \[I= \{(C, H)\in R_d(Y_5^m)\times Gr(13-m,10)| C\subset H\}\]
be the incident variety of the pair of the curve $C$ and the subspace $H\subset \PP^9$ whose codimension is $m-3$.
The second projection map $p_2:I \to Gr(13-m,10)$ is clearly dominant. Moreover, $Gr(13-m,10)$ is irreducible. Since $p_2^{-1}(H)=R_d(Y_5^3)$ is irreducible for the general $H$ (Proposition \ref{H11}), the incident variety $I$ is irreducible. Now the first projection map $p_1: I \lr R_d(Y_5^m)$ is dominant because each smooth rational curve of degree $d\le 3$ in $Y^m_5\subset Gr(2,5)\subset \PP^9$ is contained in a $\PP^3$ and hence in a 
subspace $H\subset \PP^9$ of codimension $m-3$. This proves the claim.
\end{proof}
With the irreducibility (Corollary \ref{H12}) at hand, the rationality of $R_d(Y^m_5)$ for $m\ge 4$ and $d\le 3$ is obtained through the following lemmas.
\begin{lemm}(cf. \cite[Theorem 3]{KP01} and \cite[Theorem 4.9]{Landsberg})\label{H6}
Let $Y^6_5=Gr(2,5)$. Then
\begin{enumerate}
\item $R_1(Y^6_5)=Gr(1,3,5)$ is a $Gr(2,4)$-bundle over $\PP^4$;
\item $R_2(Y^6_5)$ is birational to a $Gr(3,6)$-bundle over $Gr(4,5)=\PP^4$;
\item $R_3(Y^6_5)$ is birational to a $Gr(4,7)$-bundle over $Gr(2,5)$;
%\item $R_3(Y^6_5)$ is birational to a variety which is a Zariski locally trivial fibration over $Gr(2,5)$ with fiber $Gr(4,7)$;
\item the Fano variety of planes in $Y^6_5$ is $Gr(1,4,5)\sqcup Gr(3,5)$.
\end{enumerate}
\end{lemm}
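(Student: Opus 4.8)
The plan is to establish the four assertions separately, using the vertex, envelope, and axis maps of Corollary \ref{H2} (specialized to $n=5$) to present each moduli space as a Grassmann bundle, and to classify planes by hand. Assertion (1) is immediate: by Proposition \ref{H1}(1) we have $R_1(Gr(2,5)) = Gr(1,3,5)$, and the vertex map $\eta_1$ is the forgetful morphism $Gr(1,3,5) \to Gr(1,5) = \PP^4$, $(V_1,V_3) \mapsto V_1$. Over a point $V_1$ the fiber records the choice of the plane $V_3/V_1 \subset \CC^5/V_1 \cong \CC^4$, that is, a point of $Gr(2,4)$; hence $\eta_1$ is a $Gr(2,4)$-bundle.

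For assertion (2) I would use the envelope map $\eta_2 : R_2(Gr(2,5)) \dashrightarrow Gr(4,5) \cong \PP^4$ of Corollary \ref{H2}(2), whose general fiber is $R_2(\sigma_{1,1})$, the space of smooth conics in the Klein quadric fourfold $\sigma_{1,1} \cong Gr(2,4) \subset \PP(\wedge^2 V_4) = \PP^5$ attached to the envelope $V_4$. A smooth conic $C$ spans a plane $\langle C \rangle$; since $C \subset \langle C \rangle \cap Gr(2,4)$ and both have degree $2$ whenever $\langle C \rangle \not\subset Gr(2,4)$, one has $C = \langle C \rangle \cap Gr(2,4)$, while a general plane in $\PP^5$ cuts the quadric in a smooth conic. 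Thus $C \mapsto \langle C \rangle$ is a birational map $R_2(Gr(2,4)) \dashrightarrow Gr(3,6)$. Relativizing over $Gr(4,5)$ with universal subbundle $\cV \subset \CC^5 \otimes \cO$ of rank $4$, these spanning planes sweep out the relative Grassmannian $Gr(3, \wedge^2 \cV)$, a $Gr(3,6)$-bundle, and $\eta_2$ identifies $R_2(Gr(2,5))$ with it birationally.

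Assertion (3) is parallel, via the axis map $\eta_3 : R_3(Gr(2,5)) \dashrightarrow Gr(2,5)$ of Corollary \ref{H2}(3), whose general fiber is $R_3(\sigma_{2,0})$, the space of twisted cubics in the Schubert fourfold $\sigma_{2,0}$ attached to the axis $\ell_0 = \PP(W)$. By Remark \ref{corr}, $\sigma_{2,0}$ spans the $\PP^6 = \PP\big(\ker(\wedge^2\CC^5 \to \wedge^4\CC^5,\ \omega \mapsto \omega \wedge (\wedge^2 W))\big)$ and is cut out there by three quadrics; by Definition \ref{H4} it has degree $3$. A twisted cubic $C$ spans a $\PP^3 \subset \PP^6$ with $C = \langle C \rangle \cap \sigma_{2,0}$, while for a general $\PP^3 \subset \PP^6$ the intersection $\PP^3 \cap \sigma_{2,0}$ is a nondegenerate degree-$3$ curve, i.e. a twisted cubic; hence $C \mapsto \langle C \rangle$ is a birational map $R_3(\sigma_{2,0}) \dashrightarrow Gr(4,7)$. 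Since $\omega \mapsto \omega \wedge (\wedge^2 W)$ has image of constant dimension $3$, the kernels form a rank-$7$ subbundle $\cK \subset \wedge^2 \CC^5 \otimes \cO$ over $Gr(2,5)$, and relativizing gives the $Gr(4,7)$-bundle $Gr(4, \cK)$ to which $R_3(Gr(2,5))$ is birational. The main obstacle in (2) and (3) is precisely this birational step: checking that the general linear section ($\langle C\rangle \cap Gr(2,4)$, resp. $\PP^3 \cap \sigma_{2,0}$) is a smooth, irreducible, nondegenerate rational curve of the expected degree, so that the spanning-space map is a genuine birational inverse --- a Bertini-type verification that also requires controlling the singular locus of $\sigma_{2,0}$, together with the constant-rank check making $\cK$ a bundle.

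For assertion (4) I would classify the planes $\Pi \cong \PP^2 \subset Gr(2,5)$ directly. Any two points of $\Pi$ are joined by a line of $Gr(2,5)$, which by Proposition \ref{G1}(1) parameterizes a pencil of coplanar concurrent lines of $\PP^4$; hence the family $\{L_x\}_{x \in \Pi}$ of lines in $\PP^4$ is two-dimensional and pairwise incident. By the classical dichotomy for pairwise-incident families of lines, either all $L_x$ pass through a common point $p = \PP(V_1)$, or all lie in a common plane $\PP(V_3)$. In the first case $\Pi$ is a plane in the $\PP^3 = \PP(\CC^5/V_1)$ of lines through $p$, so $\Pi = \{L \mid V_1 \subset L \subset V_4\}$ for a unique $V_4$ with $V_1 \subset V_4$, giving the flag locus $Gr(1,4,5)$; in the second case $\Pi = Gr(2, V_3)$, giving $Gr(3,5)$. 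A plane cannot be of both types, since coplanar concurrent lines form only a pencil, so the two loci are disjoint and $F_2(Gr(2,5)) = Gr(1,4,5) \sqcup Gr(3,5)$.
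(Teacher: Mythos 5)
Your proposal is correct and follows essentially the same route as the paper: parts (1)--(3) use the vertex, envelope and axis maps of Corollary \ref{H2} to realize $R_d(Gr(2,5))$ birationally as the relative Grassmannians $Gr(3,\wedge^2\cU)$ over $\PP^4$ and a $Gr(4,7)$-bundle over $Gr(2,5)$ (your kernel of wedging with $\omega_\ell$ coincides with the paper's $\ker\bigl(\wedge^2\cO^{\oplus 5}\to\wedge^2\cQ\bigr)$), and part (4) is the same dichotomy into $\sigma_{3,1}$- and $\sigma_{2,2}$-planes. The only substantive difference is that you spell out the pairwise-incidence argument classifying planes, which the paper asserts with a citation to Landsberg--Manivel.
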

\begin{proof}
By Proposition \ref{G1}, giving a line in $Y^6_5$ is equivalent to giving a pair of a point $p\in \PP^4$ and a plane $\PP^2\subset \PP^4$ containing $p$ (vertex). So we get (1).

By Corollary \ref{H2} (2), we have the rational map (envelope) $\eta_2:R_2(Y^6_5)\dashrightarrow \PP^4$ whose general fiber is $R_2(\sigma_{1,1})$. By Pl\"ucker embedding, $\sigma_{1,1}\cong Gr(2,4)\subset \PP^5$ is a quadric hypersurface. Hence a general $\PP^2$ in $\PP^5$ gives a smooth conic $Gr(2,4)\cap \PP^2$. Therefore if we let $U\to Gr(4,5)\cong \PP^4$ be the universal rank 4 bundle, a general point in the relative Grassmannian $Gr(3, \wedge^2 U)$ over $\PP^4$ gives a conic in $Gr(2,5)$. This clearly gives the birational map (2).

By Corollary \ref{H2} (3), we have the rational map (axis) $\eta_3:R_3(Y^6_5)\dashrightarrow Gr(2,5)$ whose general fiber is $R_3(\sigma_{2,0})$. By direct computation with coordinates, we find that $\sigma_{2,0}$ is contained in a linear subspace $\PP^6$ of $\PP^9$ and $\sigma_{2,0}$ is defined by three quadric polynomial equations. As mentioned in Remark \ref{corr}, a general $\PP^3$ in $\PP^6$ intersects with $\sigma_{2,0}$ along a twisted cubic. 
Hence let $\cQ$ be the universal quotient bundle on $Gr(2,5)$. Let $\cK_7$ be the kernel of surjection $\wedge^2 \cO^{\oplus 5}\twoheadrightarrow \wedge^2 \cQ$. Then a general point in the relative Grassmannian $Gr(4,\cK_7)$ over $Gr(2,5)$ gives a twisted cubic in $Gr(2,5)$. This proves item (3).

A plane in $Gr(2,5)$ is either the collection of lines in $\PP^3\subset \PP^4$ passing through a point $p\in \PP^3$ or the collection of lines in a plane $\PP^2\subset \PP^4$. The first is parameterized by $Gr(1,4,5)$ while the second is by $Gr(3,5)$.
\end{proof}

\begin{lemm}\label{H7}
\begin{enumerate}
\item $R_1(Y^5_5)$ is birational to a $Gr(2,3)\cong\PP^2$-bundle over $\PP^4$;
%\item $R_2(Y^5_5)$ is birational to a variety which is a Zariski locally trivial fibration over $Gr(4,5)\cong\PP^4$ with the fiber $Gr(3,5)$;
%\item $R_3(Y^5_5)$ is birational to a variety which is a Zariski locally trivial fibration over $Gr(2,5)$ with the fiber $Gr(4,6)$.
\item $R_2(Y^5_5)$ is birational to a $Gr(3,5)$-bundle over $Gr(4,5)\cong\PP^4$;
\item $R_3(Y^5_5)$ is birational to a $Gr(4,6)$-bundle over $Gr(2,5)$.
%\item The moduli space of planes in $Y$ is $Gr(1,4,5)\sqcup Gr(3,5)$.
\end{enumerate}
\end{lemm}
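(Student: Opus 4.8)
The plan is to mirror the proof of Lemma \ref{H6}, restricting the three maps $\eta_d$ of Corollary \ref{H2} to the sublocus $R_d(Y^5_5)\subset R_d(G)$. Since a smooth rational curve of degree $\le 3$ already lies in $G=Gr(2,5)$, membership in $Y^5_5=G\cap H$ is equivalent to $C\subset H$, so $R_d(Y^5_5)=\{C\in R_d(G):C\subset H\}$. Identify $\PP^9=\PP(\wedge^2\CC^5)$, so that the general hyperplane $H$ is cut out by a general, hence rank $4$, alternating form $\omega\in\wedge^2(\CC^5)^\ast$. The governing principle in each case is that restricting $\omega$ to the linear span of the relevant Schubert cycle gives a nonzero functional, dropping that span by one dimension and thereby replacing the Grassmannian fiber of Lemma \ref{H6} by the next smaller one.

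For (1), use the morphism $\eta_1\colon R_1(Y^5_5)\to\PP^4$ sending a line to its vertex $p$. By Proposition \ref{G1}(1), the line with vertex $p$ and plane $V_3\supset\langle p\rangle$ spans $\langle p\rangle\wedge V_3\subset\wedge^2\CC^5$, so $C\subset H$ if and only if $\omega(p,v)=0$ for all $v\in V_3$, i.e. $V_3\subset W_p:=\ker\omega(p,\cdot)$. For general $p$ the form $\omega(p,\cdot)$ is a nonzero functional vanishing on $p$, so $W_p$ is $4$-dimensional and contains $p$; the fiber is $\{V_3:\langle p\rangle\subset V_3\subset W_p\}\cong Gr(2,W_p/\langle p\rangle)\cong Gr(2,3)$. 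Globalizing over $\PP^4$ gives the asserted $\PP^2$-bundle structure.

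For (2), use $\eta_2\colon R_2(Y^5_5)\dashrightarrow Gr(4,5)\cong\PP^4$ sending a conic to its envelope $V_4$. As in Lemma \ref{H6}(2), the conic lies in $\sigma_{1,1}(V_4)=Gr(2,V_4)\subset\PP(\wedge^2 V_4)=\PP^5$ and is cut out there by a general plane $\PP^2$. For general $V_4$ the restriction $\omega|_{\wedge^2 V_4}$ is a nonzero functional with kernel $\cK_5$ of dimension $5$, and $C\subset H$ holds exactly when $\langle C\rangle=\PP^2\subset\PP(\cK_5)$; hence the general fiber is $Gr(3,\cK_5)\cong Gr(3,5)$. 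Letting $U\to\PP^4$ be the universal rank $4$ bundle and $\cK_5=\ker(\wedge^2 U\xrightarrow{\omega}\cO)$, we obtain that $R_2(Y^5_5)$ is birational to $Gr(3,\cK_5)$ over $\PP^4$. For (3) the argument is identical with the axis map $\eta_3\colon R_3(Y^5_5)\dashrightarrow Gr(2,5)$: by Lemma \ref{H6}(3) the cubic lies in $\sigma_{2,0}(\ell)$, whose span is $\PP(\cK_7)=\PP^6$ with $\cK_7=\ker(\wedge^2\cO^{\oplus 5}\to\wedge^2\cQ)$, and $\omega|_{\cK_7}$ is generically a nonzero functional with $6$-dimensional kernel $\cK_6=\ker(\cK_7\xrightarrow{\omega}\cO)$. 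Since $\sigma_{2,0}\cap\PP^3$ is a twisted cubic for general $\PP^3$, the general fiber becomes $Gr(4,\cK_6)\cong Gr(4,6)$, so $R_3(Y^5_5)$ is birational to $Gr(4,\cK_6)$ over $Gr(2,5)$.

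I expect the main obstacle to be the genericity bookkeeping rather than any single computation. One must check that for general $H$ the cosections $\wedge^2 U\to\cO$ and $\cK_7\to\cO$ are surjective on a dense open subset, so that $\cK_5,\cK_6$ are honest subbundles of the stated ranks; that a general linear section of the appropriate dimension inside each Schubert cycle is a smooth curve of the correct degree carrying the prescribed invariant; and that the envelope and axis maps remain dominant after restriction to $Y^5_5$. The irreducibility needed to upgrade the generically one-to-one correspondences $C\mapsto(\text{invariant},\langle C\rangle)$ to birational equivalences is already supplied by Corollary \ref{H12}.
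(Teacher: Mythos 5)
Your proposal is correct and follows essentially the same route as the paper: restrict the vertex/envelope/axis maps of Corollary \ref{H2} to $R_d(Y^5_5)$, observe that the hyperplane cuts the linear span of the relevant Schubert-cycle fiber by one dimension, and globalize via the kernel bundles $\cK_5$, $\cK_6$ exactly as the paper does. The only cosmetic difference is in part (1), where you compute the fiber directly from the rank-$4$ alternating form $\omega(p,\cdot)$ while the paper phrases the same computation as $R_1(\sigma_{3,0}(p)\cap H_1)\cong Gr(2,3)$ via Bertini; the conclusions coincide.
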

\begin{proof}
Let $Y^5_5=Gr(2,5)\cap H_1\subset \PP^9$ where $H_1$ is a hyperplane.
The inclusion $Y^5_5\subset Y^6_5=Gr(2,5)$ induces the inclusion $$\imath_d:R_d(Y^5_5)\hookrightarrow R_d(Y^6_5).$$

For $p\in \PP^4$,
the Schubert cycle $\sigma_{3,0}(p)=\{\ell\in Gr(2,5)=Y^6_5\,|\, p\in \ell\}\cong \PP^3$ embeds into $\PP^9$ as a linear subvariety. By Bertini's theorem, for general $p$, $\sigma_{3,0}(p)$ intersects with $H_1$ cleanly along a $\PP^2$.
Therefore
a general fiber of $\eta_1\circ\imath_1:R_1(Y^5_5)\to \PP^4$ is $R_1(H_1\cap \sigma_{3,0}(p))\cong Gr(2,3)\cong \PP^2$.

A general fiber of $\eta_2\circ\imath_2:R_2(Y^5_5)\dashrightarrow Gr(4,5)\cong \PP^4$ is $R_2(\sigma_{1,1}\cap H_1)$ for a hyperplane $H_1$ of $\sigma_{1,1}=Gr(2,4)\subset \PP^5$. Since $\sigma_{1,1}$ is a quadric hypersurface, this fiber is birational to $Gr(3,5)$ which parameterizes $\PP^2$'s in $H_1\cap \PP^5$. Hence let $\cU$ be the universal subbundle on $Gr(4,5)$. Let 
$\cK_5:= \mathrm{ker}\{\wedge^2\cU\subset \wedge^2 \cO^{\oplus 5}\to \cO \}$
be the kernel of the composition map where the second arrow is given by the hyperplane $H_1$. Then the general point in the relative Grassimannian $Gr(3,\cK_5)$ over $Gr(4,5)$ gives a conic in $Y_5^5$.

A general fiber of $\eta_3\circ\imath_3:R_3(Y^5_5)\dashrightarrow Gr(2,5)$ is $R_3(\sigma_{2,0}\cap H_1)$ for a hyperplane $H_1$ of $\PP^6$ in the notation of the proof of Lemma \ref{H6}.
By the proof of Lemma \ref{H6}, a general choice of $\PP^3$ in $H_1\cap \PP^6$ gives a twisted cubic $\sigma_{2,0}\cap \PP^3$ and hence the fiber above is birational to $Gr(4,6)$. Therefore let $\cK_7$ be the bundle of rank $7$ defined in item (3) of Lemma \ref{H6}. Let $\cK_6$ be the kernel of the composition map: $\cK_7\hookrightarrow \wedge^2 \cO^{\oplus 5}\to \cO$ where the second map is defined by $H_1$. Then the relative Grassmannian $Gr(4,\cK_6)$ over $Gr(2,5)$ provides the birational model.
\end{proof}

\begin{lemm}\label{H8}
\begin{enumerate}
\item $R_1(Y^4_5)$ is birational to $\PP^4$;
\item $R_2(Y^4_5)$ is birational to a $Gr(3,4)=\PP^3$-bundle over $Gr(4,5)=\PP^4$;
\item $R_3(Y^4_5)$ is birational to a $Gr(4,5)=\PP^4$-bundle over $Gr(2,5)$.
\end{enumerate}
\end{lemm}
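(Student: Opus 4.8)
The plan is to repeat the fibration argument of Lemmas \ref{H6} and \ref{H7}, now cutting with the \emph{two} general hyperplanes $H_1,H_2$ defining $Y^4_5=Gr(2,5)\cap H_1\cap H_2$; throughout I use the inclusions $\imath_d:R_d(Y^4_5)\hookrightarrow R_d(Y^6_5)$, the vertex/envelope/axis maps $\eta_1,\eta_2,\eta_3$ of Corollary \ref{H2}, and the irreducibility from Corollary \ref{H12}, identifying in each degree the general fiber of $\eta_d\circ\imath_d$ as $R_d$ of a Schubert fiber cut by \emph{both} $H_1$ and $H_2$. For item (1) the vertex map is a morphism $\eta_1\circ\imath_1:R_1(Y^4_5)\to\PP^4$, and since $\sigma_{3,0}(p)=\{\ell\mid p\in\ell\}\cong\PP^3$ is linearly embedded in $\PP^9$, for general $p$ the two hyperplanes cut it to a $\PP^1=\sigma_{3,0}(p)\cap H_1\cap H_2$. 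This $\PP^1$ is a linear subspace of $\PP^9$ lying in $G$, hence is itself a line of $G$, and it is the \emph{unique} line of $Y^4_5$ with vertex $p$, because any such line lies in $\sigma_{3,0}(p)\cap H_1\cap H_2\cong\PP^1$ and a degree-one curve inside a $\PP^1$ fills it. The general fiber is therefore a single reduced point; as the map is also dominant, $R_1(Y^4_5)$ is birational to $\PP^4$.

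For item (2) the envelope map gives $\eta_2\circ\imath_2:R_2(Y^4_5)\dashrightarrow Gr(4,5)\cong\PP^4$ with general fiber $R_2(\sigma_{1,1}\cap H_1\cap H_2)$. Here $\sigma_{1,1}\cong Gr(2,4)$ is a quadric hypersurface in its spanning $\PP^5$, so $H_1,H_2$ cut it to a quadric surface inside $H_1\cap H_2\cap\PP^5=\PP^3$, and a general plane $\PP^2$ in this $\PP^3$ meets $\sigma_{1,1}$ in a conic; the fiber is thus birational to the $Gr(3,4)\cong\PP^3$ of planes in $\PP^3$. For the bundle, with $\cU$ the universal subbundle on $Gr(4,5)$ I set $\cK_5=\ker\{\wedge^2\cU\subset\wedge^2\cO^{\oplus 5}\to\cO\}$ (second arrow given by $H_1$) and $\cK_4=\ker\{\cK_5\to\cO\}$ (given by $H_2$), a bundle of rank $4$; a general point of the relative Grassmannian $Gr(3,\cK_4)$ over $\PP^4$ then gives a conic in $Y^4_5$, exhibiting the $\PP^3$-bundle.

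For item (3) the axis map gives $\eta_3\circ\imath_3:R_3(Y^4_5)\dashrightarrow Gr(2,5)$ with general fiber $R_3(\sigma_{2,0}\cap H_1\cap H_2)$. By the coordinate computation recalled in Lemma \ref{H6}, $\sigma_{2,0}$ spans a linear $\PP^6\subset\PP^9$; thus $H_1,H_2$ cut this span to $H_1\cap H_2\cap\PP^6=\PP^4$, and a general $\PP^3$ in this $\PP^4$ meets $\sigma_{2,0}$ in a twisted cubic, so the fiber is birational to the $Gr(4,5)\cong\PP^4$ of hyperplanes in $\PP^4$. With $\cK_7=\ker\{\wedge^2\cO^{\oplus 5}\twoheadrightarrow\wedge^2\cQ\}$ as in Lemma \ref{H6}, setting $\cK_6=\ker\{\cK_7\to\cO\}$ (given by $H_1$) and $\cK_5=\ker\{\cK_6\to\cO\}$ (given by $H_2$), a bundle of rank $5$, a general point of $Gr(4,\cK_5)$ over $Gr(2,5)$ gives a twisted cubic in $Y^4_5$, exhibiting the $Gr(4,5)=\PP^4$-bundle.

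The main obstacle is the genericity bookkeeping rather than any new idea. One must invoke Bertini and transversality to guarantee that the two hyperplane sections of these linearly (resp.\ quadratically) embedded Schubert fibers behave like generic linear sections of the ambient projective spaces, and that a general member of each relative Grassmannian is a \emph{smooth} rational curve of the prescribed degree actually contained in $Y^4_5$ --- checks identical in form to those in Lemmas \ref{H6} and \ref{H7}. The one qualitatively new point is item (1), where the second hyperplane collapses the fiber of the vertex map from the $\PP^2$ of the five-fold case (Lemma \ref{H7}) down to a single point, so that one gets birationality with $\PP^4$ itself rather than a nontrivial bundle; here the argument must confirm both the uniqueness of the surviving line and that the resulting generically injective dominant map is birational.
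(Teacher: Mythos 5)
Your proposal is correct and follows exactly the paper's route: the paper's entire proof of Lemma \ref{H8} is the one-line remark that one repeats the argument of Lemma \ref{H7} with $H_1\cap H_2$ in place of $H_1$, which is precisely what you carry out (vertex/envelope/axis fibrations with fibers $\sigma_{3,0}(p)\cap H_1\cap H_2\cong\PP^1$, $\sigma_{1,1}\cap H_1\cap H_2$, $\sigma_{2,0}\cap H_1\cap H_2$, and the corresponding relative Grassmannians $Gr(3,\cK_4)$ and $Gr(4,\cK_5)$). Your write-up simply makes explicit the genericity and fiber-identification details that the paper leaves implicit.
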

\begin{proof}
The proof is identical to that for Lemma \ref{H7}, if we simply use $H_1\cap H_2$ in place of $H_1$ and replace $Y^5_5$ by $Y^4_5$ where $H_1$ and $H_2$ are hyperplanes.
\end{proof}
Combining the above lemmas and Corollary \ref{H12}, we obtain Theorem \ref{H14} because the Grassmannian variety $Gr(k,n)$ is rational.

In the remaining sections, we will provide a more detailed description of compactifications of $R_d(Y^m_5)$.

\section{Fano 6-fold $Y^6_5=Gr(2,5)$}\label{sy65}

Throughout this section, we let $G=Y^6_5=Gr(2,5)$
and consider the compactified moduli spaces of rational curves of degree $d\le 3$ in $G$.

Let us recall several compactifications of $R_d(G)$ that we studied in \cite{CHK}.

\medskip
\noindent$\bullet$ \textbf{Hilbert compactification}: Since $Y\subset \PP^9$, Grothendieck's general
construction gives us the Hilbert scheme $Hilb^{dt+1}(G)$ of
closed subschemes of $G$ with Hilbert polynomial $h(t)=dt+1$ as a
closed subscheme of $Hilb^{dt+1}(\PP^r)$. The closure
$H_d(G)$ of $R_d(G)$ in $Hilb^{dt+1}(G)$ is a compactification which we call the
\emph{Hilbert compactification}..

\medskip
\noindent $\bullet$ \textbf{Kontsevich compactification}:
A stable map is a morphism of a connected nodal curve $f:C\to G$
with finite automorphism group. Here two maps $f:C\to G$
and $f':C'\to G$ are isomorphic if there exists an isomorphism
$\eta:C\to C'$ satisfying $f'\circ \eta=f$. The moduli space $\cM_0(G,d)$
of isomorphism classes of stable maps $f:C\to G$ with arithmetic genus $0$ and
$\mathrm{deg}(f^*\cO_G(1))=d$ has a projective coarse moduli space.
The closure $M_d(G)$ of
$R_d(G)$ in $\cM_0(G,d)$ is a
compactification, called the \emph{Kontsevich
compactification}.

\medskip
\noindent $\bullet$ \textbf{Simpson compactification}: A coherent sheaf $E$ on
$G$ is \emph{pure} if any nonzero subsheaf of $E$ has the same
dimensional support as $E$. A pure sheaf $E$ is called
\emph{semistable} if \[ \frac{\chi(E(t))}{r(E)}\le
\frac{\chi(E''(t))}{r(E'')}\qquad \text{for }t>\!>0
\]
for any nontrivial pure quotient sheaf $E''$ of the same
dimension, where $r(E)$ denotes the leading coefficient of the
Hilbert polynomial $\chi(E(t))=\chi(E\otimes \cO_G(t))$. We obtain
\emph{stability} if $\le $ is replaced by $<$. If we replace the
quotient sheaves $E''$ by subsheaves $E'$ and reverse the
inequality, we obtain an equivalent definition of (semi)stability.
There is a projective moduli scheme $\cS
imp^{P}(G)$ of semistable sheaves on $G$ of a given Hilbert polynomial $P$.
If $C$ is a smooth rational curve in $G$, then the structure sheaf
$\cO_C$ is a stable sheaf on $G$. The closure $P_d(G)$ of $R_d(G)$ in $\cS imp^{dt+1}(G)$ is a compactification called the
\emph{Simpson compactification}.

\medskip

$R_1(G)=Gr(1,3,5)$ is compact and thus $H_1(G)=M_1(G)=P_1(G)=R_1(G)=Gr(1,3,5)$.
%Obviously, $H_1(G)=M_1(G)=\cS_1(G)=R_1(G)=Gr(1,3,5)$.
\subsection{Conics in $G=Gr(2,5)$}
Since $G$ is a homogeneous variety, we can apply the results of \cite{CHK}.
\begin{theo}\label{th1.1}\cite[\S3]{CHK} \begin{enumerate}
\item $P_2(G)\cong H_2(G)$.
\item The blow-up of $M_2(G)$ along
the locus of stable maps with linear image
is isomorphic to the smooth blow-up of $P_2(G)$ along the locus of stable sheaves with linear support.
\end{enumerate}
\end{theo}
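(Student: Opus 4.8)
Since $G=Gr(2,5)$ is homogeneous, both assertions are instances of the general comparison of compactifications of the space of conics carried out in \cite[\S3]{CHK}, and the plan is to reconstruct that comparison while using homogeneity only to make the local analysis uniform along the degeneration loci. Throughout I work over the common open locus $R_2(G)$, on which all three spaces agree, and analyse the boundary strata on each side.

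For (1), I would first prove that a semistable sheaf $E$ on $G$ with Hilbert polynomial $2t+1$ is the structure sheaf of a conic. Purity forces $E$ to be a generically rank-one torsion sheaf on a one-dimensional support of degree $2$, so the support is a smooth conic, a pair of incident lines, or a reduced line; a short Hilbert-polynomial computation shows that the split sheaves $\cO_\ell(a)\oplus\cO_\ell(b)$ with $a+b=-1$ are never stable, so in every case stability forces $E=\cO_Z$ for a unique planar degree-$2$, arithmetic-genus-$0$ subscheme $Z$ (a smooth conic, a nodal conic, or a planar double line). The flat family of structure sheaves over $H_2(G)$ then defines a bijective morphism $H_2(G)\to P_2(G)$, $Z\mapsto\cO_Z$. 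To upgrade it to an isomorphism I would compare Zariski tangent spaces: applying $\mathrm{Hom}(-,\cO_Z)$ to $0\to I_Z\to\cO_G\to\cO_Z\to0$ and using $\mathrm{Hom}(\cO_Z,\cO_Z)=\mathrm{Hom}(\cO_G,\cO_Z)=\CC$ together with $\mathrm{Ext}^1(\cO_G,\cO_Z)=H^1(\cO_Z)=0$ for a genus-$0$ conic yields a natural isomorphism $T_{[Z]}H_2(G)=\mathrm{Hom}(I_Z,\cO_Z)\cong\mathrm{Ext}^1_G(\cO_Z,\cO_Z)=T_{[\cO_Z]}P_2(G)$; a bijective morphism that is an isomorphism on tangent spaces onto a variety one checks to be normal is an isomorphism.

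For (2), I would make the two centers explicit. The locus $\Delta_M\subset M_2(G)$ of stable maps with linear image is, over the open part, the family of degree-$2$ covers $\PP^1\to\ell$ of a line $\ell\in R_1(G)$; forgetting the source up to automorphism, such a cover is recorded by $\ell$ together with its branch divisor in $\mathrm{Sym}^2\ell\cong\PP^2$, so $\Delta_M$ fibers over $R_1(G)$ with fibre $\PP^2$. The locus $\Delta_P\subset P_2(G)=H_2(G)$ of stable sheaves with linear support is, by the analysis in (1), the family of structure sheaves of planar double lines, each recorded by $\ell$ together with the plane carrying the thickening, so $\Delta_P$ likewise fibers over $R_1(G)$. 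There is a natural birational map $M_2(G)\dashrightarrow P_2(G)$ which is the identity on $R_2(G)$ and is undefined exactly along $\Delta_M$, because a double cover remembers branch data but not a planar thickening, and whose inverse is undefined exactly along $\Delta_P$ for the opposite reason; note in particular that the naive pushforward $f_*\cO_{\PP^1}=\cO_\ell\oplus\cO_\ell(-1)$ of a double cover is unstable, which is why the correspondence is only rational. The strategy is to form the closure $\Gamma\subset M_2(G)\times P_2(G)$ of the graph of this map and to show that the two projections realise $\Gamma$ as the blow-up of $M_2(G)$ along $\Delta_M$ and as the blow-up of $P_2(G)$ along $\Delta_P$; this yields the asserted isomorphism of blow-ups, and smoothness of the centers (hence of the blow-ups) is part of the verification.

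The main obstacle is this last graph identification, which is a local deformation-theoretic computation: I must compute the normal cones of $\Delta_M$ in $M_2(G)$ and of $\Delta_P$ in $P_2(G)$ — equivalently the obstruction spaces $\mathrm{Ext}^1$ at a double cover and at a double-line sheaf — show they are smooth, and check that the projectivised normal directions on the two sides are matched by the branch-divisor/thickening correspondence, so that $\Gamma\to M_2(G)$ and $\Gamma\to P_2(G)$ are exactly the two blow-ups rather than merely dominant maps. Here homogeneity of $G$ is decisive: the $\mathrm{Aut}(G)$-action is transitive on $R_1(G)$, so it suffices to carry out the computation along one fixed line $\ell_0$ and then spread the identification over the whole base by equivariance — which is precisely the reduction that makes the general machinery of \cite{CHK} applicable to $G=Gr(2,5)$.
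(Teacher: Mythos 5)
This theorem is not proved in the paper at all: it is quoted verbatim from \cite[\S3]{CHK}, and the only thing the authors verify here is the hypothesis that $G=Gr(2,5)$ is homogeneous. So the comparison is really between your reconstruction and the argument of that reference. Your part (1) is essentially the standard (and correct) argument: classify the semistable sheaves with Hilbert polynomial $2t+1$ (connected support of degree $2$, the split sheaves on a reduced line are destabilized by $\cO_\ell$, so only structure sheaves of planar conics including double lines survive), then compare $\mathrm{Hom}(I_Z,\cO_Z)$ with $\mathrm{Ext}^1(\cO_Z,\cO_Z)$ via the long exact sequence and conclude by ``bijective + \'etale + reduced target''. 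That is a complete proof modulo routine checks.

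Part (2) is where the gap is. You correctly identify the two centers ($\Delta_M\cong$ a $\mathrm{Sym}^2\PP^1$-bundle over $R_1(G)$ of double covers of lines, $\Delta_P$ the double-line sheaves), the reason the birational map does not extend ($f_*\cO_{\PP^1}=\cO_\ell\oplus\cO_\ell(-1)$ is unstable), and the right shape of the answer. But the entire content of statement (2) is the identification of the graph closure $\Gamma$ with \emph{both} blow-ups, and you explicitly defer exactly that: the computation of the normal cones $N_{\Delta_M/M_2(G)}$ at a double cover and $N_{\Delta_P/P_2(G)}$ at a double line, the proof that both centers are smooth, and the matching of projectivized normal directions (degenerating branch data on one side, ribbon/thickening data $\mathrm{Hom}(I_\ell/I_\ell^2,\cO_\ell(-1))$ on the other). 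Labeling this ``the main obstacle'' is accurate, but it means the proposal for (2) is a plan rather than a proof. It is also worth noting that \cite{CHK} does not proceed by taking a graph closure: one first blows up $M_2(G)$ along $\Delta_M$, then constructs on the blow-up a flat family of conics/stable sheaves by an elementary modification of $f_*\cO_C$ along the exceptional divisor (the exceptional normal direction supplying precisely the missing thickening datum), obtains a morphism to $P_2(G)$ by the universal property, and finally shows this morphism is a smooth blow-down (e.g.\ by a Fujiki--Nakano type criterion). That route avoids having to control a graph closure directly, which is the delicate point your outline leaves open; the homogeneity reduction you invoke at the end is correct but only reduces the problem to a single fiber, it does not carry out the fiberwise computation.
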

For later use (\S \ref{birgeo}), let us discuss a birational geometry of the Hilbert scheme $H_2(G)$ which is related with the enveloping map (Corollary \ref{H2} (2)) \[\eta_2:R_2(G)\dashrightarrow \PP^4=Gr(4,5).\] Let $Gr(2,\cU)$ be the Grassmannian bundle over the universal bundle $\cU$ of $Gr(4,5)$. By \cite[Theorem 1.4]{KOL}, there exists a relative Hilbert scheme of conics
\[
\widetilde{\eta}_2: H_2(Gr(2,\cU))\lr Gr(4,5)
\]
with the canonical projection map $\widetilde{\eta}_2$. The Fano variety $F_2(Gr(2,n))$ of planes in $Gr(2,n)$ was described in \cite[Theorem 4.9]{Landsberg}.
In particular, $F_2(Gr(2,4))=Gr(1,4)\sqcup Gr(3,4)$ and $F_2(Gr(2,5))=Gr(1,4,5)\sqcup Gr(3,5)$. The planes parameterized by $Gr(1,4,5)$ (resp. $Gr(3,5)$) is called by $\sigma_{3,1}$ (resp. $\sigma_{2,2}$)-plane.
\begin{prop}\cite[\S 3.1]{iliev3}\label{bicom}
Under the above definition and notation, the natural birational morphism
\[
\Psi:H_2(Gr(2,\cU))\longrightarrow H_2(G)
\]
is a \emph{smooth} blow up morphism along the space $\Delta$ of conics lying in the $\sigma_{2,2}$-planes.
\end{prop}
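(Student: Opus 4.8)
The plan is to realize $H_2(Gr(2,\cU))$ as the closure of the graph of the enveloping map $\eta_2$ and then identify its indeterminacy locus with $\Delta$. First I would note that a point of the relative Hilbert scheme $H_2(Gr(2,\cU))$ is a pair $(V_4,C)$ consisting of a hyperplane $V_4\subset \CC^5$ (a point of $Gr(4,5)=\PP^4$) together with a conic $C\subset Gr(2,V_4)\subset G$, that $\Psi$ is the forgetful map $(V_4,C)\mapsto C$, and that $\widetilde{\eta}_2$ is $(V_4,C)\mapsto V_4$. The incidence condition $C\subset Gr(2,V_4)$ cuts out $H_2(Gr(2,\cU))$ as a closed subscheme of $H_2(G)\times \PP^4$, so the combined map $(\Psi,\widetilde{\eta}_2)$ is a closed embedding; since the source is irreducible of dimension $13$ and agrees with the graph of $\eta_2$ over the locus where the envelope is unique, it is exactly the closure $\overline{\Gamma}$ of the graph of $\eta_2:H_2(G)\dashrightarrow \PP^4$, with $\Psi$ the first projection. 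This reduces the statement to analyzing the base scheme of $\eta_2$.

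Next I would pin down the indeterminacy on the level of points. By Proposition \ref{H1}(2) a conic $C$ has a unique envelope $V_4$ precisely when the lines of $\PP^4$ it parameterizes span a three-dimensional $\PP^3$, and the envelope fails to be unique exactly when all these lines lie in a common $\PP^2=\PP(V_3)$, i.e. when $C$ lies in the $\sigma_{2,2}$-plane attached to $V_3$. In that case every hyperplane $V_4\supset V_3$ is an envelope, so the fiber of $\Psi$ over such a conic is $\{V_4 : V_3\subset V_4\}\cong \PP(\CC^5/V_3)\cong \PP^1$. Hence $\Psi$ is an isomorphism over $H_2(G)\smallsetminus \Delta$, and its exceptional locus $E$ maps onto $\Delta$ with one-dimensional fibers.

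I would then assemble the numerical and smoothness data. A dimension count gives $\dim H_2(G)=13$ (consistent with Lemma \ref{H6}(2)), while $\Delta$ fibers over the space $Gr(3,5)$ of $\sigma_{2,2}$-planes with fiber the $\PP^5$ of plane conics, so $\dim\Delta=11$ and $\mathrm{codim}_{H_2(G)}\Delta=2$; this matches the $\PP^1=\PP^{c-1}$ fibers found above. Smoothness of $H_2(G)$ along $\Delta$ can be taken from the cited structure of the conic space (and, for the smooth members, from $H^1(C,N_{C/G})=0$, which holds because the homogeneous space $G$ is convex), and $\Delta$, being a $\PP^5$-bundle over the smooth $Gr(3,5)$, is itself smooth; the source $H_2(Gr(2,\cU))$ is smooth for the same reason fiberwise over $\PP^4$, using smoothness of $H_2(Gr(2,4))$.

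With these in hand, the final step is to verify that $\Psi:\overline{\Gamma}\to H_2(G)$ is exactly the blow-up along $\Delta$. Since $\eta_2$ is a rational map to $\PP^4$, it is given by a linear subsystem of some $|\cL|$ on $H_2(G)$, and its graph closure $\overline{\Gamma}$ is by construction the blow-up of $H_2(G)$ along the base scheme of that system; once this base scheme is identified with the reduced, smooth center $\Delta$, one gets $\overline{\Gamma}=Bl_\Delta H_2(G)$, and smoothness of $\Delta$ makes $\Psi$ a smooth blow-up. The main obstacle is precisely this scheme-theoretic identification: a priori the base scheme of $\eta_2$ could be nonreduced or carry embedded components along $\Delta$, so the real work is an explicit local analysis of the family of envelopes showing that the defining sections of $\eta_2$ vanish to order exactly one along $\Delta$, equivalently that $E\to\Delta$ is the associated $\PP^1$-bundle $\PP(\cN_{\Delta/H_2(G)})$.
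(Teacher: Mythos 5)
Your setup is sound and matches the paper's framing in spirit: $\Psi$ resolves the enveloping map $\eta_2$, the locus where the envelope fails to be unique is exactly $\Delta$, the codimension of $\Delta$ is $2$, and the fibers of $\Psi$ over $\Delta$ are the $\PP^1$'s of hyperplanes $V_4\supset V_3$. But there is a genuine gap: the step you yourself flag as ``the main obstacle'' --- showing that the exceptional locus $E\to\Delta$ is the projectivized normal bundle $\PP(N_{\Delta/H_2(G)})$, equivalently that the base scheme of $\eta_2$ is the reduced smooth $\Delta$ --- is the entire content of the paper's proof, and you do not supply it. Knowing that $\Psi$ is an isomorphism off $\Delta$, that $\mathrm{codim}\,\Delta=2$, and that the fibers over $\Delta$ are $\PP^1$'s is not enough to conclude that a birational morphism between smooth varieties is the smooth blow-up along $\Delta$; one must identify the fiber $\Psi^{-1}(C)$ with $\PP(N_{\Delta/H_2(G),C})$ canonically, and your proposal ends exactly where that verification should begin.

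The paper carries out this verification intrinsically rather than by the local coordinate analysis you propose. For a conic $C$ in a $\sigma_{2,2}$-plane $\PP^2=\PP(V_3)$, it combines the two exact sequences $\ses{N_{C/\PP^2}}{N_{C/G}}{N_{\PP^2/G}|_{C}}$ and $\ses{N_{\PP^2/G}(-2)}{N_{\PP^2/G}}{N_{\PP^2/G}|_{C}}$ to obtain
\[
N_{\Delta/H_2(G),C}\cong H^1(N_{\PP^2/G}(-2)),
\]
and then computes $N_{\PP^2/G}\cong \cQ\otimes\cO_{\PP^2}^{\oplus 2}$ for a $\sigma_{2,2}$-plane ($\cQ$ the universal quotient bundle on $\PP^2$), whence $H^1(N_{\PP^2/G}(-2))\cong H^0(\cO_{\PP^2}^{\oplus 2})^{*}$ is two-dimensional and is \emph{naturally} the space of $\PP^3$'s in $\PP^4$ containing $\PP(V_3)$, i.e. the cone over the fiber $\Psi^{-1}(C)$. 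That canonical identification is what makes $\Psi$ a smooth blow-up. If you want to complete your argument along your own lines, you would have to do the equivalent local computation (order-one vanishing of the defining sections of $\eta_2$ along $\Delta$), which is no shorter; the paper's cohomological route is the efficient way to close the gap you left open.
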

\begin{proof}
The exceptional divisor is a $\PP^5$-bundle over a $Gr(3,4)$-bundle over $Gr(4,5)$. By its construction, the $Gr(3,4)$-bundle over $Gr(4,5)$ is isomorphic to the $Gr(1,2)=\PP^1$-bundle over $Gr(3,5)$ where $Gr(1,2)$ parameterizes $\PP^3$ containing a fixed $\PP^2$ in $\PP^4$. Hence, to show the smoothness of the blow-up $\Psi$, let us describe the normal space of $\Delta$ in $H_2(G)$ at $C$. From the normal bundle sequence $\ses{N_{C/\PP^2}}{N_{C/G}}{N_{\PP^2/G}|_{C}}$ and the short exact sequence $\ses{N_{\PP^2/G}(-2)}{N_{\PP^2/G}}{N_{\PP^2/G}|_{C}}$, we know that the normal space is isomorphic to $$N_{\Delta/H_2(G),C}\cong H^1(N_{\PP^2/G}(-2)).$$
From a diagram chasing, $N_{\PP^2/G}\cong \cQ\otimes \cO_{\PP^2}^{\oplus2}$ for the $\sigma_{2,2}$-type plane $\PP^2$.
Here $\cQ$ is the universal quotient bundle on $\PP^2$. This implies that the later space is $H^1(N_{\PP^2/G}(-2))\cong H^0(\cO_{\PP^2}^{\oplus2})^*$. This space is naturally identified with the choice of $\PP^3$ in $\PP^4$ while containing the plane $\PP^2$.
\end{proof}

\begin{prop}\label{chen}
Let $S(G)= Gr(3,\wedge^2\cU)$ be the relative Grassimannian bundle of the universal bundle $\cU$ over $Gr(4,5)$. Then there exists a smooth blow-up morphism
\[
\Xi:H_2(Gr(2,\cU))\longrightarrow S(G)
\]
where the blow up center is the disjoint union of the flag varieties $T(G):=Gr(1,4,5)\sqcup Gr(3,4,5)$.
\end{prop}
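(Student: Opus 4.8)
The plan is to exhibit $\Xi$ as the map sending a conic to its linear span, to recognise its contracted locus as the $\PP^5$-bundle of conics in the planes contained in the relative Pl\"ucker quadric, and to match this bundle with the projectivised normal bundle of $T(G)$.

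First I would construct $\Xi$. Every conic $C$, i.e.\ a one-dimensional subscheme with Hilbert polynomial $2t+1$ lying in a fibre $Gr(2,V_4)\subset \PP(\wedge^2 V_4)$ of $Gr(2,\cU)\to Gr(4,5)$, spans a unique plane $\PP(W_3)$ with $W_3\subset \wedge^2 V_4$ three-dimensional; the assignment $(V_4,[C])\mapsto (V_4,W_3)$ is a morphism $\Xi:H_2(Gr(2,\cU))\to S(G)=Gr(3,\wedge^2\cU)$ over $Gr(4,5)$. Since every object in sight ($\cU$, $\wedge^2\cU$, $S(G)$, $T(G)$, $H_2(Gr(2,\cU))$) is built relatively over the smooth base $Gr(4,5)$, the statement is the relative version of the classical fact that for a smooth quadric fourfold $Q=Gr(2,4)\subset \PP^5$ the Hilbert scheme of conics $H_2(Q)$ is the blow-up of $Gr(3,6)$ along its variety of planes $F_2(Q)=Gr(1,4)\sqcup Gr(3,4)$.

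Next I would check birationality and locate the contracted locus. On the open set $U\subset S(G)$ where $\PP(W_3)$ is not contained in $Gr(2,V_4)$, the intersection $\PP(W_3)\cap Gr(2,V_4)$ is a conic, and $(V_4,W_3)\mapsto (V_4,\PP(W_3)\cap Gr(2,V_4))$ inverts $\Xi$; hence $\Xi$ is birational and an isomorphism over $U$. Its complement is exactly the locus of planes lying inside the quadric, which by the classification $F_2(Gr(2,4))=Gr(1,4)\sqcup Gr(3,4)$ of $\sigma_{3,1}$- and $\sigma_{2,2}$-planes, taken relatively over $Gr(4,5)$, is $T(G)=Gr(1,4,5)\sqcup Gr(3,4,5)$, smooth of codimension $13-7=6$ in $S(G)$. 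For $(V_4,W_3)\in T(G)$ the plane $\PP(W_3)$ lies entirely in $Gr(2,V_4)$, so every conic in $\PP(W_3)$ already lies in $Gr(2,V_4)$ and is a flat limit of smooth conics; thus $\Xi^{-1}(V_4,W_3)$ is the full space $\PP(\mathrm{Sym}^2 W_3^\vee)\cong\PP^5$ of conics in $\PP(W_3)$. As $H_2(Gr(2,\cU))$ is smooth (Proposition \ref{bicom}), the contracted locus $E=\Xi^{-1}(T(G))$ is a divisor forming a $\PP^5$-bundle over $T(G)$.

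The crux is to identify $\Xi$ with the blow-up of $S(G)$ along $T(G)$, and for this I would compute $N_{T(G)/S(G)}$ exactly as in the proof of Proposition \ref{bicom}. Writing $F\in \mathrm{Sym}^2(\wedge^2\cU)^\vee$ for the relative Pl\"ucker quadric with polar form $B$, the subvariety $T(G)$ is cut out in $S(G)$ by $F|_{W_3}=0$; differentiating, a first-order deformation $\phi\in\mathrm{Hom}(W_3,\wedge^2 V_4/W_3)$ stays tangent to $T(G)$ precisely when the quadratic form $w\mapsto B(w,\phi(w))$ vanishes, so the differential of the defining equation is the map $\mathrm{Hom}(W_3,\wedge^2 V_4/W_3)\to \mathrm{Sym}^2 W_3^\vee$. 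A dimension count ($\dim\mathrm{Hom}=9$, $\dim\mathrm{Sym}^2 W_3^\vee=6$, with kernel the $3$-dimensional tangent space to the fibre of $T(G)$) shows this map is surjective, whence $N_{T(G)/S(G)}\cong \mathrm{Sym}^2 W_3^\vee$ and $\PP(N_{T(G)/S(G)})=\PP(\mathrm{Sym}^2 W_3^\vee)$ is canonically the $\PP^5$-bundle of conics in $\PP(W_3)$, the same bundle as $E$. Since $E$ is a Cartier divisor on the smooth variety $H_2(Gr(2,\cU))$, the sheaf $\Xi^{-1}\cI_{T(G)}\cdot\cO$ is invertible, so $\Xi$ factors through $\mathrm{Bl}_{T(G)}S(G)$; the resulting morphism is an isomorphism over $U$ and, by the normal-bundle identification, an isomorphism of $\PP^5$-bundles over $T(G)$, hence an isomorphism. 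The main obstacle I anticipate is this last matching: confirming that the tautological identification $E=\PP(N_{T(G)/S(G)})$ produced by the blow-up coincides fibrewise with the parametrisation of $E$ by conics in $\PP(W_3)$ coming from $\Xi$, i.e.\ that the normal direction along which a plane $\PP(W_3)$ is pushed off the quadric is read off by the quadratic form cutting out the corresponding limiting conic.
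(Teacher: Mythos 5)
Your argument is correct, but it is much more self-contained than the paper's, which disposes of the proposition in two sentences: since every object in sight is pulled back along $Gr(4,5)$, the base-change property of blow-ups reduces the claim to a single fibre, where the statement --- that $H_2(Gr(2,4))$ is the blow-up of $Gr(3,\wedge^2 V_4)\cong Gr(3,6)$ along the variety $F_2(Gr(2,4))=Gr(1,4)\sqcup Gr(3,4)$ of planes contained in the Pl\"ucker quadric --- is quoted from \cite[Lemma 3.9]{CC10}, with $T(G)$ identified as the relative orthogonal Grassmannian $OG(3,\wedge^2\cU)$ via \cite[Proposition 4.16]{HT15}. What you do differently is prove that fibrewise lemma from scratch: the span map $\Xi$, its inverse over the locus of planes not contained in the quadric, the identification of the fibres over $T(G)$ with $\PP(\mathrm{Sym}^2 W_3^\vee)$, the computation $N_{T(G)/S(G)}\cong \mathrm{Sym}^2\cW^\vee$ coming from the regular section of $\mathrm{Sym}^2\cW^\vee$ cutting out $T(G)$ in $S(G)$, and the universal property of the blow-up. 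This buys independence from the reference and makes the exceptional divisor explicit as the bundle of conics in the planes of the quadric. The step you flag as the remaining obstacle --- that the tautological identification $E\cong\PP(N_{T(G)/S(G)})$ agrees with the parametrisation of $E$ by conics in $\PP(W_3)$ --- is indeed the crux of the cited lemma and resolves exactly as you indicate: deforming $\PP(W_3)\subset Q$ along $\phi\in\mathrm{Hom}(W_3,\wedge^2V_4/W_3)$ degenerates the conics $\PP(W_3^t)\cap Q$ to the conic $\{B(w,\phi(w))=0\}$ in $\PP(W_3)$, so the two $\PP^5$-bundle structures coincide fibrewise and Zariski's main theorem (together with the invertibility of the pulled-back ideal, which should be checked in local coordinates rather than inferred only from $E$ being a divisor) finishes the argument.
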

\begin{proof}
By the base change property of the blow-up, it is enough to check the claim fiberwisely. The later one has been done in \cite[Lemma 3.9]{CC10}. Note that $T(G)$ is isomorphic to the relative orthogonal bundle $OG(3,\wedge^2\cU)$ over $Gr(4,5)$ (\cite[Proposition 4.16]{HT15}).
\end{proof}
By Proposition \ref{bicom} and \ref{chen}, we obtain the following diagram:
\begin{equation}\label{diahil}
\xymatrix{
&&H_2(Gr(2,\cU))\ar[dll]_{\Xi}\ar[drr]^{\Psi}\ar@{=>}^{\widetilde{\eta_2}}[dd]&&\\
S(Gr(2,5))\ar@{=>}[drr]&&&&H_2(Gr(2,5))\ar@{-->}_{\eta_2}[dll]\\
&&Gr(4,5),&&
}
\end{equation}
where $\cU$ is the universal subbundle over $Gr(4,5)$.

Let us finish this subsection after some remarks about stable map space similar to the diagram \eqref{diahil} from the viewpoint of the birational geometry (\cite{CC10}). Since we do not use this part again in the remaining of the paper, we only sketch the results. Let $M_2(Gr(2,\cU))$ be the space of relative stable maps of degree two and genus zero. Let $M_2(Gr(2,\cU))\lr M_2(Gr(2,5))$ be the forgetful map. Let $N(Gr(2,5))$ be the relative Kronecker quiver space $N(\cU;2,2)$ with the fiber $N(4;2,2)$ (for the precise definition of quiver space, see \cite{CM15}). Then there exists a divisorial contraction $M_2(Gr(2,\cU))\lr N(Gr(2,5))$ contracting the stable maps whose images are planar (\cite{CM15}). In summary, we obtain
\[
\xymatrix{
&&M_2(Gr(2,\cU))\ar[dll]\ar[drr]\ar@{=>}[dd]&&\\
N(Gr(2,5))\ar@{=>}[drr]&&&&M_2(Gr(2,5))\ar@{-->}[dll]\\
&&Gr(4,5).&&
}
\]
Consider the universal rank 2 subbundle $U\hookrightarrow \cO_Y^{\oplus 5}$ and its dual surjection $\cO_Y^{\oplus 5}\to U^\vee$. Given a general conic $\PP^1\cong C\hookrightarrow Gr(2,5)$, the restriction of $U^\vee$ splits as $U^\vee|_{C}\cong \cO_{\PP^1}(1)\oplus \cO_{\PP^1}(1)$ and the universal quotient map is $\cO_{\PP^1}^{\oplus 5}\to \cO_{\PP^1}(1)\oplus \cO_{\PP^1}(1)$. Hence general rational curves are parameterized by an open subset of the geometric invariant theory quotient
$$\PP (H^0(\PP^1,\cO(1))\otimes \CC^2\otimes \CC^5)/\!/SL_2\times SL_2$$
where the first $SL_2$ acts on $\PP^1$ in the standard manner while the second acts on $\CC^2$ by matrix multiplication.
This GIT quotient is obviously the quiver variety $N(5;2,2)$ associated to the quiver with two vertices decorated with 2-dimensional vector spaces and five edges connecting the vertices. The geometry of the moduli space of stable maps $M_2(Gr(2,5))$ in the view point of Mori program has been studied in \cite{CM16b}.

\subsection{Twisted cubics in $G=Gr(2,5)$}
Because $G$ is homogeneous, we can apply the results of \cite{CHK} again.
\def\Ext{\mathrm{Ext} }
\def\Hom{\mathrm{Hom} }
\begin{theo}\label{th1.2} \cite[\S4]{CHK}
\begin{enumerate}
 \item $H_3(G)$ is the smooth blow-up of
  $P_3(G)$ along the locus $\Delta(G)$
of planar stable sheaves.
  \item  $P_3(G)$
  is obtained from $M_3(G)$ by three weighted blow-ups
  followed by three weighted blow-downs. In other words,
  $P_3(G)$ is obtained from $M_3(G)$ by blowing up along $\Gamma^1_0$,
$\Gamma^2_1$, $\Gamma^3_2$ and then blowing down along
$\Gamma^2_3$, $\Gamma^3_4$, $\Gamma^1_5$ where $\Gamma^j_i$ is the
proper transform of $\Gamma^j_{i-1}$ if $\Gamma^j_{i-1}$ is not
the blow-up/-down center and the image/preimage of
$\Gamma^j_{i-1}$ otherwise. Here $\Gamma_0^1$ is the locus of stable maps whose images are lines; $\Gamma_0^2$ is the locus of stable maps whose images consist of two lines; $\Gamma_1^3$ is the subvariety of the exceptional divisor $\Gamma_1^1$ which is a fiber bundle over $\Gamma^1_0$ with fibers
$$\PP \Hom_1 (\CC^2,\Ext^1_G(\cO_L,\cO_L(-1)))\cong \PP^1\times \PP \Ext^1_G(\cO_L,\cO_L(-1))$$
where $\Hom_1$ denotes the locus of rank 1 homomorphisms.
\end{enumerate}
\end{theo}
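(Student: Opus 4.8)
The plan is to follow the deformation-theoretic strategy of \cite{CHK}, using crucially that $G=Gr(2,5)$ is homogeneous. The three spaces $M_3(G)$, $P_3(G)$, $H_3(G)$ all compactify $R_3(G)$ but record different boundary data: a Kontsevich-stable map remembers the source nodal curve together with the morphism, a Simpson-stable sheaf remembers only a pure one-dimensional sheaf on $G$, and a point of the Hilbert scheme remembers the full subscheme. The passage between them is local along the boundary and, at each boundary object, is controlled by $\Ext$-groups computed on $G$. Since $G$ is homogeneous, every boundary stratum is a homogeneous fibre bundle over a flag-type base, so these computations are constant along each stratum and match the model computations already carried out in \cite{CHK}; granting this dictionary, both parts follow.

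For part (1), over the open locus of curves not contained in a plane, a subscheme $C$ and its structure sheaf $\cO_C$ determine one another, so $H_3(G)$ and $P_3(G)$ agree there. The discrepancy is concentrated along $\Delta(G)$, the locus of stable sheaves supported on a plane $\PP^2\subset G$ (a $\sigma_{2,2}$- or $\sigma_{3,1}$-plane in the notation of \S\ref{sy65}). The strategy, following Proposition~\ref{bicom}, is to compute the normal space to $\Delta(G)$ inside $H_3(G)$ by deformation theory: starting from the normal bundle sequences $\ses{N_{C/\PP^2}}{N_{C/G}}{N_{\PP^2/G}|_{C}}$ and an analogue of $\ses{N_{\PP^2/G}(-3)}{N_{\PP^2/G}}{N_{\PP^2/G}|_{C}}$, one identifies the normal direction with the $\Ext^1$-group recording first-order deformations pushing the curve out of its plane (the embedded-point degree of freedom), and checks it is locally free of constant rank over $\Delta(G)$. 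Matching this with the exceptional divisor of the natural morphism $H_3(G)\to P_3(G)$ exhibits the latter as a smooth blow-up.

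For part (2), I would stratify the boundary of $M_3(G)$ by the image type of the stable map and perform the indicated weighted modifications. First blow up along the smallest stratum $\Gamma^1_0$ of maps with image a line; then along the proper transform $\Gamma^2_1$ of the locus of maps with image two lines; then along $\Gamma^3_2$, the proper transform of the distinguished subvariety $\Gamma^3_1\subset\Gamma^1_1$ whose fibres over $\Gamma^1_0$ are $\PP\Hom_1(\CC^2,\Ext^1_G(\cO_L,\cO_L(-1)))$. The weight of each blow-up is read off from the order of vanishing in the local $\Ext$-model, and each center is smooth precisely because $G$ is homogeneous. After these blow-ups the proper transforms $\Gamma^2_3,\Gamma^3_4,\Gamma^1_5$ acquire a projective-bundle structure in the fibre direction of the rational map to $P_3(G)$ and can be contracted in turn; comparing the modular meaning of each contracted locus with the Simpson stability identifies the resulting space with $P_3(G)$.

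The hard part will be the bookkeeping of part (2): verifying that each successive center $\Gamma^j_i$ remains smooth after the previous modifications, determining the correct weight of each weighted blow-up from the local deformation theory, and confirming that the three chosen proper transforms are genuinely contractible onto smooth centers. This is exactly where homogeneity does the work, reducing every such verification to the constant $\Ext$-computation along a homogeneous fibre bundle and hence to the already-established case of \cite{CHK}; the remaining task is to check that the normal bundle sequences for lines and $\sigma$-planes in $Gr(2,5)$ produce the same numerical answers as those model computations.
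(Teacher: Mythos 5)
Your proposal takes essentially the same route as the paper: for this theorem the paper gives no independent argument at all, but simply observes that $G=Gr(2,5)$ is a projective homogeneous variety and invokes the general results of \cite[\S 4]{CHK}, and your sketch is an expanded paraphrase of that citation which likewise defers every substantive verification (smoothness of the centers, the weights, the contractibility of $\Gamma^2_3,\Gamma^3_4,\Gamma^1_5$) to the model computations of \cite{CHK}. The one directional slip worth fixing: since the claim is that $H_3(G)$ is the blow-up of $P_3(G)$, the normal cone to be computed is that of $\Delta(G)$ inside $P_3(G)$ (not inside $H_3(G)$), to be matched against the exceptional locus of the natural morphism $H_3(G)\to P_3(G)$.
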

\[
\xymatrix{ &&&M_3\ar[dl]_{\Gamma_2^3}\ar[dr]^{\Gamma_4^2}\\
&&M_2\ar[dl]_{\Gamma_1^2}&&M_4\ar[dr]^{\Gamma^3_5}\\
&M_1\ar[dl]_{\Gamma^1}&&&&M_5\ar[dr]^{\Gamma^1_6}&&H_3(G)\ar[d]^{\Delta(G)}\\
M_3(G)&&&&&&M_6\ar[r]^\cong & P_3(G). }\]

\section{Fano 5-fold $Y^5_5$}\label{sy55}
In this section, we let $Y^5_5$ be the intersection of the Grassmannian $G=Gr(2,5)$ and a general hyperplane $H$.
For explicit calculation, we will let $$H=\{p_{12}-p_{03}=0\}$$
where $p_{ij}$'s denote the Pl\"ucker coordinates.

\subsection{Fano varieties of lines and planes in $Y_5^5$}
Let $Y:=Y_5^5=Gr(2,5)\cap H$. In this section, we give a precise description of the Fano variety $F_1(Y)$ of lines in $Y$ and the Fano variety $F_2(Y)$ of planes in $Y$ in the following two propositions.

\begin{prop}\label{sy551}
$F_1(Y)=H_1(Y)=S_1(Y)=M_1(Y)$ is the blow-up of the Grassmannian $Gr(3,5)$ along a smooth quadric 3-fold $\Sigma$.
\end{prop}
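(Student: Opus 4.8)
The plan is to realize $F_1(Y)$ inside the variety of lines of the ambient Grassmannian and then exhibit the projection to $Gr(3,5)$ as the asserted blow-up. The equalities $F_1(Y)=H_1(Y)=S_1(Y)=M_1(Y)$ are formal for degree one: a subscheme of $Y$ with Hilbert polynomial $t+1$ is a line and admits no degeneration, while the structure sheaf of a line is a stable sheaf and the inclusion of a line is an unramified stable map; hence all four moduli spaces coincide with the Fano scheme $F_1(Y)$ of lines, and it suffices to analyze $F_1(Y)$.

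By Proposition \ref{H1}(1), $F_1(G)=Gr(1,3,5)$ parametrizes flags $V_1\subset V_3$, the associated line in $G$ being $\{[L]\,|\,V_1\subset L\subset V_3\}$. Writing $\omega=e_1^*\wedge e_2^*-e_0^*\wedge e_3^*\in\wedge^2(\CC^5)^*$ for the alternating form attached to $H=\{p_{12}-p_{03}=0\}$, the Pl\"ucker point of $L=\langle v,w\rangle$ is $v\wedge w$ and $H(v\wedge w)=\omega(v,w)$. Thus such a line lies in $Y=G\cap H$ iff $\omega(v,w)=0$ for all $v\in V_1$ and $w\in V_3$, i.e.
\[F_1(Y)=\{(V_1,V_3)\in Gr(1,3,5)\,|\,V_1\subset\mathrm{rad}(\omega|_{V_3})\}.\]
Since $\omega(v,v)=0$, this is the zero scheme of the section of the rank-$2$ bundle $\cV_1^*\otimes(\cV_3/\cV_1)^*$ induced by $\omega$, so the expected dimension is $\dim Gr(1,3,5)-2=8-2=6=\dim Gr(3,5)$.

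Next I would study the projection $\phi\colon F_1(Y)\to Gr(3,5)$, $(V_1,V_3)\mapsto V_3$, through the alternating bundle map $\tilde\omega\colon\cV_3\to\cV_3^*$. On a fibre, $\omega|_{V_3}$ is an alternating form on the $3$-space $V_3$, hence of rank $0$ or $2$. Where it has rank $2$ its radical is a single line, so $\phi$ is bijective there; where it vanishes the whole $\PP(V_3)=\PP^2$ occurs. The degeneracy locus $\Sigma=\{V_3\,|\,\omega|_{V_3}=0\}$ consists of the $\omega$-isotropic $3$-planes. As $\omega$ has rank $4$ with radical $\langle e_4\rangle$, any such $V_3$ must contain $e_4$ (a symplectic $\CC^4$ has no isotropic $3$-plane) and $V_3/\langle e_4\rangle$ is a Lagrangian plane of the induced symplectic form $\bar\omega$ on $\CC^4$; conversely each Lagrangian lifts uniquely. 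Hence $\Sigma\cong LG(2,4)$, the intersection of the Klein quadric $Gr(2,4)$ with the hyperplane $\{\bar\omega=0\}$, a \emph{smooth} quadric $3$-fold of codimension $3$ in $Gr(3,5)$.

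Finally I would identify $\phi$ with the blow-up. The variety $F_1(Y)=R_1(Y^5_5)$ is irreducible of dimension $6$ by Corollary \ref{H12}, $\phi$ is an isomorphism over $Gr(3,5)\setminus\Sigma$, and $\phi^{-1}(\Sigma)$ is the $\PP^2$-bundle $\PP(\cV_3|_{\Sigma})$, a divisor whose fibre dimension $2=\mathrm{codim}_{Gr(3,5)}\Sigma-1$ matches that of the exceptional divisor of $Bl_\Sigma Gr(3,5)$. The radical construction gives a rational map $\rho\colon Gr(3,5)\dashrightarrow\PP^4$, defined off $\Sigma$ by the $2\times 2$ sub-Pfaffians of $\tilde\omega$, whose graph has closure $F_1(Y)$; thus $\phi$ is the blow-up of $Gr(3,5)$ along the base scheme of $\rho$. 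The crux, and the step I expect to be the main obstacle, is to show that this base scheme is reduced and equals $\Sigma$, equivalently that the sub-Pfaffians cut out $\Sigma$ transversally, so that $\Sigma$ and $F_1(Y)$ are smooth and $\cO(\phi^{-1}\Sigma)$ restricts to $\cO_{\PP^2}(-1)$ on each exceptional fibre. I would verify this by a direct computation in affine Pl\"ucker coordinates on a chart of $Gr(3,5)$ meeting $\Sigma$; granting it, the universal property of the blow-up (equivalently, the standard criterion recognizing a divisorial contraction to a smooth centre with the correct projective-bundle exceptional divisor) yields the isomorphism $F_1(Y)\cong Bl_\Sigma Gr(3,5)$.
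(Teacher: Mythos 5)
Your proposal is correct and follows essentially the same route as the paper: both analyze the projection $(V_1,V_3)\mapsto V_3$ from $F_1(Y)\subset Gr(1,3,5)$ to $Gr(3,5)$, find that the fibre is a single point except over the locus $\Sigma$ of $\omega$-isotropic $3$-planes (a smooth quadric $3$-fold, your $LG(2,4)$ being the paper's $Gr(2,U_4)\cap H'$) where it is $\PP(V_3)\cong\PP^2$, and then confirm the blow-up structure by a local coordinate computation. Your phrasing via the radical of the alternating form and the $2\times 2$ sub-Pfaffians is just a more invariant packaging of the paper's explicit chart calculation, and like the paper you correctly defer the scheme-theoretic verification (reducedness of the centre, transversality) to that local check.
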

\begin{proof}
Each line $Z$ in $G$ is $\{\ell\in G\,|\, p\in \ell\subset P\}$ for a plane $P\subset \PP^4$ and a point $p\in P$.
We have a morphism $$\psi:F_1(Y)\hookrightarrow F_1(G)=Gr(1,3,5)\lra Gr(3,5),\quad (p,P)\mapsto P.$$

For a plane $P\in Gr(3,5)$ represented by
\[\left( \begin{matrix}
1&0&0&a_3&a_4\\
0&1&0&b_3&b_4\\
0&0&1&c_3&c_4
\end{matrix}\right),\]
let us consider lines in $P$ represented by
\[\left( \begin{matrix}
1&0&\alpha&a_3+\alpha c_3&a_4+\alpha c_4\\
0&1&\beta&b_3+\beta c_3&b_4+\beta c_4
\end{matrix}\right).\]
The equation $p_{12}=p_{03}$ gives a unique line $Z$ in $Y$ defined by $\alpha+b_3+\beta c_3=0$. Hence $\psi^{-1}(P)$ consists of a unique point.

For a plane $P\in Gr(3,5)$ represented by
\[\left( \begin{matrix}
1&0&a_2&a_3&0\\
0&1&b_2&b_3&0\\
0&0&c_2&c_3&1
\end{matrix}\right),\]
let us consider lines in $P$ represented by
\[\left( \begin{matrix}
1&0&a_2+\alpha c_2&a_3+\alpha c_2 & \alpha \\
0&1&b_2+\beta c_2&b_3+\beta c_3& \beta
\end{matrix}\right).\]
The equation $p_{12}=p_{03}$ gives a unique line $Z$ in $Y$ defined by $a_2+b_3+\alpha c_2+\beta c_3=0$. Hence $\psi^{-1}(P)$ consists of a unique point unless $c_2=c_3=a_2+b_3=0$. When the equations hold, $\psi^{-1}(P)=P^\vee\cong \PP^2$ is the space of lines in the plane $P$.

By repeating the same with all other charts, we find that there is a smooth quadric 3-fold
$$\Sigma=Gr(2,U_4)\cap H'\subset H'\cong \PP^4$$
where for $U_4=\langle e_0,e_1,e_2,e_3\rangle$, $Gr(2,U_4)\subset Gr(3,5)$ is the linear embedding: $U_2\mapsto U_2+\langle e_4\rangle$ and  $H'=\zero(p_{12}-p_{03})$ is the hyperplane in $\PP(\wedge^2U_4)\cong \PP^5$. Note that 
$\psi^{-1}(P)$ is a point for $P\notin \Sigma$ and $P^\vee$ for $P\in \Sigma$.
It is straightforward to prove that $\psi$ is the blowup along $\Sigma$, by using explicit local chart calculation. For instance, consider the local chart $(a_2,b_2,c_2,a_3,b_3,c_3,\lambda,\mu)$ of $Gr(1,3,5)$ represented by
\[\left( \begin{matrix}
1&\lambda&a_2+\lambda b_2+\mu c_2&a_3+\lambda b_3+\mu c_3&\mu\\
0&1&b_2&b_3&0\\
0&0&c_2&c_3&1
\end{matrix}\right)\]
whose first row represents the one dimensional subspace $V_1$ while all the three rows span a three dimensional subspace $V_3$. Let us consider lines in the plane $\PP V_3$ passing through $\PP V_1$ represented by
\[\left( \begin{matrix}
1&\lambda&a_2+\lambda b_2+\mu c_2&a_3+\lambda b_3+\mu c_3&\mu \\
0&\alpha&\alpha b_2+\beta c_2&\alpha b_3+\beta c_3& \beta
\end{matrix}\right).\]
The equation $p_{12}=p_{03}$ gives us $a_2+b_3=-\mu c_2$ and $c_3=\lambda c_2$, which define $F_1(Y)$. Certainly this is the blowup map $(c_2,\lambda,\mu)\mapsto (c_2,c_3,a_2+b_3)$ along $\Sigma=\zero(c_2,c_3,a_2+b_3)$ in local coordinates.
\end{proof}

\begin{lemm}\label{smoothness}
Let $F_1(Y)$ be the moduli space of lines in $Y$. Then $F_1(Y)$ is smooth.
\end{lemm}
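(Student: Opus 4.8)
The plan is to prove smoothness by deformation theory, that is, by showing $H^1(Z,N_{Z/Y})=0$ for every line $Z\subset Y$. Since $F_1(Y)=H_1(Y)$ is the Hilbert scheme of lines, the vanishing of this obstruction space makes the Hilbert scheme smooth at each $[Z]$, of dimension $h^0(N_{Z/Y})$, whence $F_1(Y)$ is smooth. Alternatively one may simply invoke Proposition \ref{sy551}: a blow-up of the smooth variety $Gr(3,5)$ along the smooth quadric threefold $\Sigma$ is smooth. I prefer the intrinsic computation below, since it verifies smoothness uniformly, including along the exceptional locus, without relying on a chart-by-chart check of the blow-up structure.

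First I would compute $N_{Z/G}$ for $G=Gr(2,5)$. Because $G$ is homogeneous, every line is projectively equivalent, so the splitting type is the same for all $Z$. Writing $T_G\cong\cU^\vee\otimes\cQ$ and restricting to $Z\cong\PP^1$, one has $\cU^\vee|_Z\cong\cO\oplus\cO(1)$ and $\cQ|_Z\cong\cO(1)\oplus\cO^{\oplus2}$, so that $T_G|_Z\cong\cO(2)\oplus\cO(1)^{\oplus3}\oplus\cO^{\oplus2}$. Peeling off $T_Z=\cO(2)$ gives
\[
N_{Z/G}\cong\cO(1)^{\oplus3}\oplus\cO^{\oplus2}.
\]
Since $Y=G\cap H$ is a hyperplane section, $N_{Y/G}\cong\cO_G(1)|_Y$, and a line has Pl\"ucker degree $1$, so $N_{Y/G}|_Z\cong\cO(1)$.

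Finally I would exploit the normal bundle sequence
\[
0\to N_{Z/Y}\to N_{Z/G}\to N_{Y/G}|_Z\to 0,
\]
which is exact on the right because $Z\subset Y\subset G$ are all smooth; in particular $N_{Z/Y}$ is a rank-$4$ subbundle of $\cO(1)^{\oplus3}\oplus\cO^{\oplus2}$ of degree $3-1=2$. By Grothendieck's theorem $N_{Z/Y}\cong\bigoplus_{i=1}^{4}\cO(a_i)$; each summand injects into $\cO(1)^{\oplus3}\oplus\cO^{\oplus2}$, forcing $a_i\le 1$, while the condition $\sum_i a_i=2$ with four summands each $\le 1$ forces every $a_i\ge -1$. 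Hence $H^1(\cO(a_i))=0$ for all $i$, so $H^1(N_{Z/Y})=0$, giving smoothness of dimension $h^0(N_{Z/Y})=6$, in agreement with Proposition \ref{sy551}.

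The only delicate point is the uniform computation of $N_{Z/G}$ together with the exactness of the normal sequence, which is exactly where homogeneity of $Gr(2,5)$ and smoothness of $Y$ are used. I expect the genuine subtlety to sit along the exceptional locus over $\Sigma$, where the normal bundle jumps (to $\cO(1)^{\oplus3}\oplus\cO(-1)$); the rank-and-degree bound above nonetheless kills $H^1$ there as well, so no line is obstructed and $F_1(Y)$ is smooth everywhere.
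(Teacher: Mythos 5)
Your proof is correct, and it takes a genuinely different route from the paper's. The paper does not verify $H^1(N_{Z/Y})=0$ at every line: instead it combines (i) the explicit identification of $F_1(Y)_{\mathrm{red}}$ with the blow-up $\mathrm{bl}_\Sigma Gr(3,5)$ from Proposition \ref{sy551}, (ii) the coincidence of the actual dimension $6$ with the expected dimension $\chi(N_{L/Y})=6$, which by \cite[Theorem 2.15]{KOL} makes $F_1(Y)$ a locally complete intersection and hence Cohen--Macaulay, and (iii) the vanishing $h^1(N_{L/Y})=0$ at a \emph{single} line to get generic reducedness, so that ``generically reduced $+$ CM'' forces $F_1(Y)=F_1(Y)_{\mathrm{red}}$, which is smooth. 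Your argument instead pins down $N_{Z/G}\cong\cO(1)^{\oplus3}\oplus\cO^{\oplus2}$ by homogeneity and then uses the rank-and-degree constraints on the subbundle $N_{Z/Y}=\bigoplus\cO(a_i)$ (each $a_i\le 1$, $\sum a_i=2$, hence each $a_i\ge-1$) to kill $H^1$ at \emph{every} line, including the jumping lines over $\Sigma$; this gives unobstructedness and hence smoothness of the Hilbert scheme directly, without the blow-up identification or the lci/CM/reducedness chain. What you lose is only that your route does not by itself re-derive the global description of $F_1(Y)$; what you gain is a uniform, chart-free proof of scheme-theoretic smoothness that is logically independent of Proposition \ref{sy551}. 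The one inessential assertion in your write-up --- that the splitting type jumps to $\cO(1)^{\oplus3}\oplus\cO(-1)$ on the exceptional locus --- is stated without proof, but nothing in your argument depends on identifying the special type, since the bound $a_i\ge-1$ covers all cases.
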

\begin{proof}
As seen in Proposition \ref{sy551}, the reduced scheme $F_1(Y)_{\mathrm{red}}$ is irreducible. Let $L$ be a line in $Y$.
Note that $L$ is a locally complete intersection for all $L\subset Y$. From the nested normal bundle sequence $\ses{N_{L/Y}}{N_{L/G}}{N_{Y/G}|_L=\cO_L(1)}$, the expected dimension of $F_1(Y)$ is equal to $h^0(N_{L/Y})-h^1(N_{L/Y})=6$. But since each point $[L]$ in $F_1(Y)$ has dimension $6$ by Proposition \ref{sy551} again, $F_1(Y)$ is a locally complete intersection by \cite[Theorem 2.15]{KOL}. This implies that $F_1(Y)$ is Cohen-Macaulay. Also it is known that a generically reduced Cohen-Macaulay space is reduced (\cite[page 49-51]{LO2}).
%\add{
%Hence it is enough to prove that $h^1(N_{L/Y})=0$ for a single line $L$ in $Y$. It can be checked by the following. For a line $L\subset G$, it is well known that $N_{l/G}\cong \cO^{\oplus 2}\oplus \cO(1)^{\oplus 3}$. Consider the normal bundle sequence $\ses{N_{L/Y}}{N_{L/G}}{N_{Y/G}|_L}$. Since $N_{Y/G}|_L\cong \cO(1)$, we can easily check that $N_{L/Y}\cong \cO^{\oplus 2}\oplus \cO(1)^{\oplus 2} \ or \ N_{L/Y}\cong \cO(-1)\oplus \cO(1)^{\oplus 3}$. Thus we have $h^1(N_{L/Y})=0$. 
%}
Hence it is enough to prove that $h^1(N_{L/Y})=0$ for a single line $L$ in $Y$, which can be checked for the line $L$ represented by $\left( \begin{matrix}
1&0&0&0&0 \\
0&s&t&0&0
\end{matrix}\right)$. 
%Hence it is enough to prove that $h^1(N_{L/Y})=0$ for a single line $L$ in $Y$, which is an easy computation by the help of Macaulay2 (\cite{M2}). 
After all, we proved that $F_1(Y)=F_1(Y)_{\mathrm{red}}\cong\mathrm{bl}_{\Sigma} Gr(3,5)$ is smooth.
\end{proof}

\begin{prop}(\cite[\S 4.4]{iliev3})\label{sy552}
The Fano variety of planes $F_2(Y)$ consists of two components $F_2^{3,1}(Y)\sqcup F_2^{2,2}(Y)$. The first component $F_2^{3,1}(Y)$ is the blow-up of $\PP^4$ at a point $y_0$ and the second component $F_2^{2,2}(Y)$ is a smooth quadric 3-fold $\Sigma$.
\end{prop}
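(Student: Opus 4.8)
The plan is to intersect the two‑component classification of planes in $Gr(2,5)$ from Lemma \ref{H6}(4) with the hyperplane $H$. Since any $\PP^2\subset Y=Gr(2,5)\cap H$ is in particular a plane in $Gr(2,5)$, every plane in $Y$ is either of $\sigma_{3,1}$-type or of $\sigma_{2,2}$-type and is contained in $H$; as $F_2(Gr(2,5))=Gr(1,4,5)\sqcup Gr(3,5)$ is already a disjoint union, this immediately yields the decomposition $F_2(Y)=F_2^{3,1}(Y)\sqcup F_2^{2,2}(Y)$, and it remains only to identify each piece. I would first encode $H$ by the alternating form $\omega=e_1^\ast\wedge e_2^\ast-e_0^\ast\wedge e_3^\ast\in\wedge^2(\CC^5)^\ast$, so that a point $[\langle u,v\rangle]\in Gr(2,5)$ lies on $H$ exactly when $\omega(u,v)=0$. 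Note that $\omega$ has rank $4$ with radical $R=\langle e_4\rangle$. Spanning each type of plane inside $\PP(\wedge^2\CC^5)$ then translates the condition ``plane $\subset H$'' into: $\sigma_{2,2}(V_3)\subset H\iff \omega|_{V_3}=0$, and $\sigma_{3,1}(V_1,V_4)\subset H\iff \omega(v_1,w)=0$ for all $w\in V_4$.

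For the $\sigma_{2,2}$-component I would analyze the $3$-dimensional $\omega$-isotropic subspaces $V_3$. Because the induced nondegenerate symplectic form on $\CC^5/R\cong\CC^4$ has maximal isotropic subspaces of dimension $2$, any isotropic $V_3$ must contain the radical $R$; writing $V_3=\langle e_4\rangle\oplus V_2$ with $V_2=V_3\cap W$ and $W=\langle e_0,e_1,e_2,e_3\rangle$, the condition becomes $\omega|_{V_2}=0$, i.e. $V_2$ is a Lagrangian plane for $\omega|_W$. Hence $F_2^{2,2}(Y)$ is isomorphic to the Lagrangian Grassmannian of $(W,\omega|_W)$, which under the embedding $V_2\mapsto V_2+\langle e_4\rangle$ is precisely the smooth quadric threefold $\Sigma=Gr(2,U_4)\cap H'$ (with $U_4=W$) appearing in Proposition \ref{sy551}, already shown there to be smooth.

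For the $\sigma_{3,1}$-component I would study the vertex map $\phi\colon F_2^{3,1}(Y)\to\PP^4=\PP(\CC^5)$, $(V_1,V_4)\mapsto V_1$. When $[V_1]\neq y_0:=[e_4]$, the functional $\omega(v_1,\cdot)$ is nonzero, its kernel $v_1^\perp$ is a hyperplane containing both $v_1$ and $e_4$, and the constraint $V_4\subseteq v_1^\perp$ with $\dim V_4=4$ forces $V_4=v_1^\perp$; thus $\phi$ is bijective over $\PP^4\setminus\{y_0\}$. When $[V_1]=y_0$ the constraint is vacuous, so the fiber is the set of all $4$-planes containing $e_4$, namely $Gr(3,\CC^5/R)\cong\PP^3$. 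This is exactly the fiber behaviour of the blow-up $\mathrm{bl}_{y_0}\PP^4$, whose exceptional divisor over the center is a $\PP^3$.

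Upgrading this fiberwise match to an actual isomorphism $F_2^{3,1}(Y)\cong\mathrm{bl}_{y_0}\PP^4$ is the main obstacle, since a birational morphism onto $\PP^4$ that is an isomorphism away from $y_0$ with a $\PP^3$ over $y_0$ need not be the blow-up. I would settle this exactly as in Proposition \ref{sy551}: choose the affine chart $v_1=x_0e_0+x_1e_1+x_2e_2+x_3e_3+e_4$ around $y_0$ together with a chart of $4$-planes $V_4\supseteq\langle e_4\rangle$, write out the linear equations $\omega(v_1,w)=0$ cutting out $F_2^{3,1}(Y)$, and verify that they present the standard relations between the $x_i$ and the ratios parameterizing $V_4$, i.e. the coordinate ring of the blow-up of the origin in $\CC^4$. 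Smoothness and irreducibility of this component follow from the explicit chart, and combining with the previous paragraph gives $F_2(Y)=\mathrm{bl}_{y_0}\PP^4\sqcup\Sigma$, as claimed.
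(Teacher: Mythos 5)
Your proposal is correct and follows essentially the same route as the paper's proof: split $F_2(Y)$ according to the two components of $F_2(Gr(2,5))$, identify the $\sigma_{2,2}$-locus with the quadric threefold $\Sigma$ from Proposition \ref{sy551}, analyze the vertex map on the $\sigma_{3,1}$-locus fiberwise (bijective away from $y_0$, a $\PP^3$ over $y_0$), and confirm the blow-up structure by an explicit local chart. The only difference is presentational: you phrase the containment conditions invariantly via the rank-$4$ alternating form $\omega$ and recognize $F_2^{2,2}(Y)$ as a Lagrangian Grassmannian, where the paper carries out the same fiber computations directly in Pl\"ucker coordinates.
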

\begin{proof}
From the proof of the previous lemma, we find that the locus $F_2^{2,2}(Y)$ of $\sigma_{2,2}$-planes is the quadric 3-fold $\Sigma$.

Assigning the vertex gives a morphism $\psi:F_2^{3,1}(Y)\to \PP^4$. Let $y=[1:a_1:a_2:a_3:a_4]$ be the vertex of a $\sigma_{3,1}$-line represented by
\[\left(\begin{matrix} 1&a_1&a_2&a_3&a_4\\ 0&b_1&b_2&b_3&b_4 \end{matrix}\right).\]
The equation $p_{12}-p_{03}$ is a nonzero linear equation $b_3=a_1b_2-a_2b_1$ and thus a unique plane.

By calculating with all charts, we find that $\psi^{-1}(y)$ is a point if and only if $y=[0:0:0:0:1]=:y_0$ and $\psi^{-1}(y_0)$ is the dual $Gr(3,4)\cong \PP^4$ of planes in $\zero(y_4)\cong \PP^3$.

%As in the proof of Proposition \ref{sy551}, 
It is straightforward to check that $\psi$ is the blowup of $\PP^4$ at $y_0$ by explicit local calculation. 
For instance, let us consider the local chart $$\{([a_0:a_1:a_2:a_3:1],[c_0:c_1:c_2:c_3:c_4])|a_0c_0+a_1c_1+a_2c_2+a_3c_3+c_4=0\}$$ of $Gr(1,4,5)\subset Gr(1,5)\times Gr(4,5)\cong \PP^4\times \PP^{4*}$.
The linear space $\PP^3$ of a $\sigma_{3,1}(y,\PP^3)$-plane in $Y$ with the vertex $y=[a_0:a_1:a_2:a_3:1]$ is given by $a_1x_2-a_2x_1-a_0x_3+a_3x_0=0$, which comes from the relation $p_{12}-p_{03}=0$. Therefore the equation of $F_2^{3,1}(Y)$ is given by the condition
\[
\rank \left(\begin{matrix} a_3&-a_2&a_1&-a_0&0\\ c_0&c_1&c_2&c_3&c_4\end{matrix}\right)=\rank \left(\begin{matrix} a_3&-a_2&a_1&-a_0\\ c_0&c_1&c_2&c_3\end{matrix}\right)=1
\]
which implies that $F_2^{3,1}(Y)\cong \mathrm{bl}_0\CC^4$. By the same argument in Lemma \ref{smoothness}, one can show that the Hilbert scheme $F_2(Y)$ of planes is smooth and thus finish the proof.
\end{proof}

\subsection{Conics in $Y^5_5$}\label{birgeo}
Cooking up the geometric properties of the lines and planes in $Y_5^5$, we study the birational relation between $H_2(Y_5^5)$ and a projective model via the diagram \eqref{diahil} (Proposition \ref{birmodel}). The key point of the argument depends on \cite[Definition-Proposition 3.4]{CHK}.

Let $\cU$ be the universal subbundle on $Gr(4,5)$. Let 
\begin{equation}\label{ker}
\cK:= \mathrm{ker}\{\wedge^2\cU\subset \wedge^2 \cO^{\oplus 5}\to \cO \}\end{equation}
be the kernel of the composition map where the second arrow is $p_{12}-p_{03}$ (cf. \cite[Proposition B.6.1]{KPS16}). Note that $\cK$ is locally free because of the choice of the hyperplane $p_{12}-p_{03}=0$. Let $S(Y):=Gr(3, \cK)$ and then $S(Y)\subset S(G):= Gr(3,\wedge^2\cU)$ by its definition. Recall that the blow-up center $T(G)$ in Proposition \ref{chen} can be described in terms of the disjoint union of the flag varieties: $Gr(1,4,5)\sqcup Gr(3,4,5)$.

Let $V=\CC^5$ be a $5$-dimensional vector space. Note that the space $S(G)$ is given by an incident variety $$S(G)=\{(U,V_4)|U\subset \wedge^2 V_4\}\subset Gr(3,\wedge^2V)\times Gr(4, V).$$
The embedding map $T(G):=T^{3,1}(G)\sqcup T^{2,2}(G)\hookrightarrow S(G)$ is defined as follow.
\begin{itemize}
\item For $[(V_1,V_4)]\in T^{3,1}(G)$, $$(V_1,V_4)\mapsto (W, V_4)$$ where $W=\ker(\wedge^2V_4\twoheadrightarrow \wedge^2(V_4/V_1))(=V_1\wedge V_4)$ is the $3$-dimensional vector space.
\item For $[(V_3,V_4)]\in T^{2,2}(G)$, $$(V_3,V_4)\mapsto (\wedge^2V_3, V_4).$$
\end{itemize}

Let $T(Y):= S(Y)\cap T(G)$.
\begin{prop}
The intersection part $T(Y)$ is a disjoint union of two connected components: $T^{3,1}(Y)\sqcup T^{2,2}(Y)$ where
\begin{enumerate}
\item $T^{3,1}(Y)\cong F^{3,1}(Y)$ and
\item $T^{2,2}(Y)$ is isomorphic to a $\PP^1$-bundle over the quadric $3$-fold $\Sigma(= F^{2,2}(Y))$. 
\end{enumerate}
\end{prop}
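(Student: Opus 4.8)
The plan is to intersect $S(Y)=Gr(3,\cK)$ with the two components of $T(G)$ one at a time, using the explicit embeddings $T^{3,1}(G),T^{2,2}(G)\hookrightarrow S(G)$ recalled above. Since $T(G)=T^{3,1}(G)\sqcup T^{2,2}(G)$ is already a disjoint union inside $S(G)$, the intersection $T(Y)=S(Y)\cap T(G)$ splits automatically as $T^{3,1}(Y)\sqcup T^{2,2}(Y)$ with $T^{\bullet}(Y):=S(Y)\cap T^{\bullet}(G)$; thus disjointness is free and it remains only to identify each piece. The single observation driving both identifications is that, unwinding \eqref{ker}, a point $(U,V_4)\in S(G)$ lies in $S(Y)$ if and only if the linear form $p_{12}-p_{03}\in(\wedge^2V)^*$ restricts to zero on the subspace $U\subset\wedge^2V_4$.

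First I would treat the $(3,1)$-component. A point $(V_1,V_4)\in T^{3,1}(G)$ has image $(W,V_4)$ with $W=V_1\wedge V_4$, so membership in $S(Y)$ reads $(p_{12}-p_{03})(v_1\wedge v)=0$ for all $v\in V_4$, where $v_1$ spans $V_1$. This is exactly the requirement that every line $\langle v_1,v\rangle$ with $V_1\subset\langle v_1,v\rangle\subset V_4$ lie on the hyperplane $H$, i.e. that the entire $\sigma_{3,1}$-plane attached to $(V_1,V_4)$ be contained in $Y$. Since the embedding $(V_1,V_4)\mapsto(V_1\wedge V_4,V_4)$ is a closed immersion, projecting back along it identifies $T^{3,1}(Y)$ with the locus of $\sigma_{3,1}$-planes contained in $Y$, which is the component $F_2^{3,1}(Y)$ of Proposition \ref{sy552}. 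This proves (1).

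Next, for the $(2,2)$-component, $(V_3,V_4)\in T^{2,2}(G)$ maps to $(\wedge^2V_3,V_4)$, and membership in $S(Y)$ becomes $(p_{12}-p_{03})|_{\wedge^2V_3}=0$, precisely the condition that the $\sigma_{2,2}$-plane $Gr(2,V_3)$ lie in $Y$; by Proposition \ref{sy552} this says exactly $V_3\in F_2^{2,2}(Y)=\Sigma$. The key point is that this condition involves only $V_3$ and places no restriction on $V_4$ beyond $V_3\subset V_4$. Hence the forgetful map $(V_3,V_4)\mapsto V_3$ realizes $T^{2,2}(Y)$ as the restriction to $\Sigma$ of the flag bundle $Gr(3,4,5)\to Gr(3,5)$, whose fibre over $V_3$ is $\{V_4\,|\,V_3\subset V_4\subset V\}\cong\PP(V/V_3)\cong\PP^1$. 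As $\Sigma$ is smooth (Proposition \ref{sy552}), this is a genuine Zariski-locally-trivial $\PP^1$-bundle, which gives (2) and shows in particular that both components are connected.

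The computations themselves are short; I expect the only care needed to be scheme-theoretic rather than set-theoretic. For the $(2,2)$-part the defining condition is pulled back from $\Sigma\subset Gr(3,5)$ along the smooth fibration $Gr(3,4,5)\to Gr(3,5)$, so the reduced bundle structure is automatic; for the $(3,1)$-part the matching with $F_2^{3,1}(Y)$ is an equality of closed subschemes cut out by the same vanishing conditions. The one structural hypothesis underlying everything is that $\cK$ is locally free of rank $5$ over all of $Gr(4,5)$, equivalently that $p_{12}-p_{03}$ restricts nontrivially to $\wedge^2V_4$ for every $V_4$; this is the genericity already recorded after \eqref{ker}, and amounts to the fact that the rank-$4$ two-form $p_{12}-p_{03}$ admits no $4$-dimensional isotropic subspace.
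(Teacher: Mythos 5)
Your proposal is correct and follows essentially the same route as the paper: identify $T^{3,1}(Y)$ with the locus of $\sigma_{3,1}$-planes contained in $Y$ (which the paper dismisses as ``clear''), and realize $T^{2,2}(Y)$ as the preimage of $\Sigma$ under the projection $Gr(3,4,5)\to Gr(3,5)$, hence a $\PP^1$-bundle. The only cosmetic difference is that the paper verifies the condition $\wedge^2 V_3\subset \cK_{[V_4]}$ by affine-chart computation recovering the equations $c_2=c_3=a_2+b_3=0$ of $\Sigma$, whereas you cite Proposition \ref{sy552} directly and note that the condition is independent of $V_4$.
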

\begin{proof}
The first part is clear. The second part comes from the direct computation from a composition digram
\[
T^{2,2} (Y)\subset Gr(3,4,5)\stackrel{p}{\longrightarrow} Gr(3,5)
\]
where the image $p(T^{2,2}(Y))=\Sigma$ is the smooth quadric $Gr(2,V_4^0)\cap H'$ (cf., Proposition \ref{sy552}). Here $V_4^0=\mathrm{span}\langle e_0,e_1,e_2,e_3\rangle$ and $H'=\mathrm{zero}(p_{12}-p_{03})$ is the hyperplane in $\PP(\wedge^2 V_4^0)$. This can be proved directly by computing each affine chart. For instance, let $(V_3,V_4)\in Gr(3,4,5)$. Let $P(V_3)=[\PP(v_1,v_2,v_3)]\in Gr(3,5)$ be the plane represented by
\[\left( \begin{matrix}
1&0&a_2&a_3&0\\
0&1&b_2&b_3&0\\
0&0&c_2&c_3&1
\end{matrix}\right).\]
Then, $\wedge^2(\langle v_1,v_2,v_3\rangle)\subset K_{[V_4]}$ if and only if $c_2=c_3=a_2+b_3=0$. Doing the same computation in other chart, we obtain the result.
\end{proof}
In fact, one can show that the projection map of the intersection part $T(Y)\subset T(G)=Gr(1,4,5)\sqcup Gr(3,4,5) \lr Gr(4,5)$ has a bundle structure over its base.
\begin{prop}\label{diff}
 The intersection part $T(Y)$ is a $\PP^1\sqcup \PP^1$-bundle over the Grassmanian $Gr(3,4)$ linearly embedded in $Gr(4,5)$.
\end{prop}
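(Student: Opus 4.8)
The plan is to convert the incidence conditions defining $T(Y)=S(Y)\cap T(G)$ into isotropy conditions for the alternating form attached to the hyperplane $H$, and then to realize each of the two components as the projectivization of a rank-$2$ bundle over a common base. First I would identify the defining functional: the Pl\"ucker relation $p_{12}-p_{03}$, viewed as a linear form $\phi$ on $\wedge^2 V$ with $V=\CC^5$, is the alternating bilinear form $\omega=e_1^*\wedge e_2^*-e_0^*\wedge e_3^*$. A direct computation shows that $\omega$ has rank $4$ with one-dimensional radical $\langle e_4\rangle$ and restricts to a nondegenerate symplectic form on $V_4^0=\langle e_0,e_1,e_2,e_3\rangle$; in particular $\cK_{[V_4]}=\ker(\phi|_{\wedge^2 V_4})$ is always a hyperplane in $\wedge^2 V_4$, re-proving local freeness of $\cK$. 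The membership $(U,V_4)\in T(Y)$ is the vanishing $\phi(U)=0$, which for the point $(V_1\wedge V_4,V_4)$ of $T^{3,1}(G)$ reads $V_1\subset V_4^{\perp_\omega}$, and for the point $(\wedge^2 V_3,V_4)$ of $T^{2,2}(G)$ reads $\omega|_{V_3}=0$, i.e. $V_3$ is $\omega$-isotropic.

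Next I would pin down the base. A symplectic $4$-space carries no $3$-dimensional isotropic subspace, so any isotropic $V_3$ must meet the radical, forcing $e_4\in V_3\subset V_4$; and for $T^{3,1}$ one checks that $V_4^{\perp_\omega}\cap V_4\neq 0$ exactly when $e_4\in V_4$ (otherwise $V_4^{\perp_\omega}=\langle e_4\rangle$ is disjoint from $V_4$, so there is no admissible $V_1$). Hence both projections to $Gr(4,5)$ land in the linearly embedded hyperplane $\{V_4\,:\,e_4\in V_4\}$, which I identify with $Gr(3,V_4^0)=Gr(3,4)$ via $V_4\mapsto V_3':=V_4\cap V_4^0$.

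Finally I would compute the fibers over such a $V_4=\langle e_4\rangle\oplus V_3'$, where $V_3'$ is a $3$-dimensional, hence coisotropic, subspace of $V_4^0$ with one-dimensional radical $R=(V_3')^{\perp_\omega}\subset V_3'$. For $T^{3,1}(Y)$ one has $V_4^{\perp_\omega}=\langle e_4\rangle\oplus R\subset V_4$, a rank-$2$ bundle, and the fiber is $\PP(V_4^{\perp_\omega})\cong\PP^1$. For $T^{2,2}(Y)$ the admissible $V_3=\langle e_4\rangle\oplus L$ correspond to Lagrangians $L\subset V_3'$ of $V_4^0$; every such $L$ contains $R$, so they are parameterized by $\PP(V_3'/R)\cong\PP^1$. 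Since these are projectivizations of rank-$2$ bundles on $Gr(3,4)$, the map $T(Y)=T^{3,1}(Y)\sqcup T^{2,2}(Y)\to Gr(3,4)$ is a $\PP^1\sqcup\PP^1$-bundle. (As a consistency check, this recovers $T^{3,1}(Y)\cong F^{3,1}(Y)=\mathrm{bl}_{y_0}\PP^4$ as the $\PP^1$-bundle over $\PP^3=Gr(3,4)$ given by projection from $y_0$, and endows $T^{2,2}(Y)$ with a second $\PP^1$-bundle structure besides the one over $\Sigma$.)

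The main obstacle is the symplectic bookkeeping that keeps the fiber dimension constant and equal to $1$ over the \emph{entire} base: a priori the rank of $\omega|_{V_4}$ or the dimension of $V_4^{\perp_\omega}\cap V_4$ could jump, and one must rule this out. The clean way to organize the argument is to fix attention on the nondegenerate symplectic space $(V_4^0,\omega)$ and to phrase both fibers intrinsically through the radical $R=(V_3')^{\perp_\omega}$ of the coisotropic plane $V_3'$; once $e_4$ is recognized as the radical of $\omega$, the remaining verifications are routine linear algebra, carried out uniformly in local charts.
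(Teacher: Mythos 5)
Your proposal is correct and follows essentially the same route as the paper: both arguments reduce membership in $T(Y)$ to conditions on the skew form $\Omega$ induced by $p_{12}-p_{03}$, identify the base as the locus $\{V_4 : e_4\in V_4\}\cong Gr(3,4)$ where $\rank\Omega|_{V_4}$ drops to $2$, and exhibit the two fibers as $\PP(\ker\Omega|_{V_4})$ and $\PP(V_4/\ker\Omega|_{V_4})$ respectively (your $\langle e_4\rangle\oplus R$ and $V_3'/R$ are exactly these spaces). Your systematic use of radicals and isotropy is a slightly cleaner bookkeeping of the paper's rank case analysis, but the substance is identical.
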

\begin{proof}
Recall that $V$ be a $5$-dimensional vector space with a fixed basis $\{e_0,e_1,e_2,e_3,e_4\}$. For $U_4=\langle e_0,e_1,e_2,e_3\rangle$, the linear embedding $Gr(3,U_4)\subset Gr(4,V)$ is given by $W \mapsto W+\langle e_4\rangle$ for $[W]\in Gr(3,4)$.
Let $[V_4] \in Gr(4,5)$ and $\Omega$ be a skew-symmetric 2-form on $V$ induced from $p_{12}-p_{03}$. More exiplcitly, $\Omega$ is a 2-form on $V$ represented by the skew-symmetric matrix $(x_{ij})\in \PP(\wedge^2V^*)$ whose entries are all zero except for $x_{12}= -x_{21}=x_{30}=-x_{03}=1$. Clearly, $\rank\Omega =4$ and thus $\rank\Omega|_{V_4}\neq 0$.
%We can observe that the $\rank\Omega|_{V_4}\neq 0$ since if it happens, it leads to a conclusion that the $\rank\Omega \leq 2$, which is a contradiction.
If the $\rank\Omega|_{V_4}$=4, then there is no vector $v\in \ker\Omega|_{V_4}$. Thus there is no $\sigma_{3,1}$-plane in the fiber of $T(Y_5^5)$ over $V$. Also there is no $3$-dimemsnional subspace $V_3 \subset V_4$ such that $\Omega|_{V_3}$=0 since it forces the $\rank\Omega|_{V_4}$ to be equal or less than 2. Therefore the fiber is empty when the $\rank\Omega|_{V_4}=4$. Consider the case that the $\rank\Omega|_{V_4}=2$. If $V_4\cap (\ker\Omega=\langle e_4 \rangle)=\langle 0 \rangle$, then the natural morphism $V_4 \mapsto V/\langle e_4 \rangle$ is an isomorphism and the $\rank\Omega|_{V_4}$=4 since the 2-form $\Omega$ descents  to a rank $4$ form on $V/\langle e_4 \rangle$, which is a contradiction. Thus $\langle e_4 \rangle \subset V_4$. Conversely, if $\langle e_4 \rangle \subset V_4$, then the $\rank\Omega \leq 2$ since $\langle e_4 \rangle \subset \ker \Omega|_{V_4}$, therefore it is equal to $2$. Hence the $\rank\Omega|_{V_4}=2$ if and only if $V_4 \in Gr(3,V_4)$ which is linearly embedded in $Gr(4,V)$. And the set of $\sigma_{3,1}$-planes over $V$ clearly equals to $\PP(\ker \Omega|_{V_4})\cong \PP^1 \subset Gr(1,V_4)=\PP^3$.

Consider a $\sigma_{2,2}$-plane over $[V_4]\in Gr(4,5)$ represented by 3-dimensional subspace $V_3\subset V_4$, i.e. $\Omega|_{V_3}=0$. $\Omega|_{V_4}$ descents to a rank $2$ form $\overline{\Omega}|_{V_4}$ on $V_4/\ker\Omega|_{V_4}$. Hence, if $\dim V_3/(\ker\Omega|_{V_4} \cap V_3)=2$, the natural morphism $V_3/(\ker\Omega \cap V_3) \mapsto V_4/\ker\Omega|_{V_4}$ is an isomorphism and therefore the $\rank\Omega|_{V_3}$=$\rank\overline{\Omega}|_{V_4}=2$, which is a contradiction. Therefore $\dim V_3/(\ker\Omega|_{V_4} \cap V_3)=1$, which is equivalent to that $\ker\Omega|_{V_4} \subset V_3$. Therefore the set of $\sigma_{2,2}$-planes corresponds to $\PP((V_4/\ker \Omega|_{V_4})^*) \cong \PP^1 \subset \PP(V_4^*)=Gr(3,4)$. 
\end{proof}

For the later use, we need to confirm that the intersection part $T(Y)= S(Y)\cap T(G)$ is a scheme theoretic one (cf. \cite[Lemma 5.1]{LLi}). Let us denote $T_{X,x}$ by the tangent space of a smooth variety $X$ at a closed point $x$.
\begin{lemm}\label{clean}
\[T_{T(Y), V}= T_{S(Y),V}\cap T_{T(G),V}\]
for all $V\in T(Y)$.
%$S(Y)$ intersects with $T(G)$ cleanly along $T(Y)$ in $S(G)$.
\end{lemm}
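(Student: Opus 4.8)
The plan is to exhibit both inclusions $S(Y)\hookrightarrow S(G)$ and $T(Y)\hookrightarrow T(G)$ as zero schemes of one and the same bundle section, after which the asserted cleanness becomes an instance of the elementary description of the tangent space of a zero scheme. First I would set up the section globally. On $S(G)=Gr(3,\wedge^2\cU)$ let $\rho\colon S(G)\to Gr(4,5)$ be the projection and $\cS\subset\rho^*\wedge^2\cU$ the tautological rank-$3$ subbundle. The skew form $\Omega=p_{12}-p_{03}$ induces a map $\wedge^2\cU\to\cO$, and composing $\cS\hookrightarrow\rho^*\wedge^2\cU\to\cO$ yields a section $s\in H^0\bigl(S(G),\cS^\vee\bigr)$ of the rank-$3$ bundle $\cS^\vee$. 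By construction $s$ vanishes at $(U,V_4)$ precisely when $U\subset\cK_{V_4}$, so its zero scheme is $Z(s)=Gr(3,\cK)=S(Y)$ as subschemes of $S(G)$. Restricting the defining ideal to the (smooth) subvariety $T(G)$ then gives the scheme-theoretic identity $T(Y)=S(Y)\cap T(G)=Z\bigl(s|_{T(G)}\bigr)$, the zero scheme of the restricted section $s|_{T(G)}$ of $\cS^\vee|_{T(G)}$.

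Next I would invoke the general fact that for any section $\sigma$ of a vector bundle $E$ on a smooth variety $X$ and any point $x$ of its zero scheme $Z(\sigma)$, the Zariski tangent space is the kernel of the derivative, $T_{Z(\sigma),x}=\ker\bigl(d\sigma_x\colon T_{X,x}\to E_x\bigr)$; this follows at once by writing $\sigma=(f_1,\dots,f_r)$ locally and observing that a tangent vector $w$ kills the ideal $(f_1,\dots,f_r)$ iff $w(f_i)=0$ for all $i$, and requires no smoothness or regularity of $Z(\sigma)$. Applying this to $(X,Z)=(S(G),S(Y))$ gives $T_{S(Y),V}=\ker(ds_V)$, while applying it to $(X,Z)=(T(G),T(Y))$ gives $T_{T(Y),V}=\ker\bigl(d(s|_{T(G)})_V\bigr)$. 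Since the derivative of a restricted section is the restriction of the derivative, $d(s|_{T(G)})_V=ds_V|_{T_{T(G),V}}$, and therefore
\[
T_{T(Y),V}=\ker\bigl(ds_V|_{T_{T(G),V}}\bigr)=T_{T(G),V}\cap\ker(ds_V)=T_{T(G),V}\cap T_{S(Y),V},
\]
which is exactly the claimed equality. Because $T(Y)=T^{3,1}(Y)\sqcup T^{2,2}(Y)$ and the section $s$ is defined uniformly over $S(G)$, the same computation covers both components without any case distinction.

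The main obstacle is not in the tangent-space manipulation, which is formal, but in the scheme-theoretic bookkeeping of the first paragraph: one must check that the composite $\cS\to\cO$ is genuinely a section of the rank-$3$ bundle $\cS^\vee$, that $Z(s)$ agrees with $Gr(3,\cK)=S(Y)$ as schemes (using that $\cK$ is a rank-$5$ subbundle, i.e. $\Omega|_{V_4}\neq0$ for every $V_4$), and that intersecting with the smooth variety $T(G)$ commutes with taking the zero scheme. As a consistency check one may reformulate the lemma as transversality: since $S(G)$, $S(Y)$, $T(G)$, $T(Y)$ are smooth of dimensions $13,10,7,4$ (the last by Proposition \ref{diff}), the equality $T_{T(Y),V}=T_{S(Y),V}\cap T_{T(G),V}$ is equivalent to $T_{S(Y),V}+T_{T(G),V}=T_{S(G),V}$ via the count $10+7-13=4=\dim T(Y)$. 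A purely computational proof could establish this sum-of-tangent-spaces identity directly in the affine charts already used in Propositions \ref{sy551}--\ref{diff}, but that route requires writing out the tangent spaces of the incidence variety $S(G)$ and of the flag varieties chart by chart, which is precisely the calculation the section-theoretic argument avoids.
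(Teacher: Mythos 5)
Your reduction of $T_{S(Y),V}=\ker(ds_V)$ to the zero-scheme description $S(Y)=Z(s)$ is fine: the surjectivity of $ds$ along $Z(s)$, hence the scheme-theoretic equality $Z(s)=Gr(3,\cK)$, does follow from $\Omega|_{V_4}\neq 0$. The gap is in the step ``$T_{T(Y),V}=\ker\bigl(d(s|_{T(G)})_V\bigr)$''. What that formula computes is the Zariski tangent space of the \emph{scheme-theoretic} intersection $Z(s|_{T(G)})=S(Y)\cap T(G)$, and for that scheme the identity $T_{Z,V}=T_{S(Y),V}\cap T_{T(G),V}$ is a tautology valid for any two closed subschemes of a smooth ambient variety (the ideal of the intersection is the sum of the ideals, and a tangent vector annihilates a sum of ideals iff it annihilates each). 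The lemma, however, is not about that scheme: $T(Y)$ is the smooth $4$-dimensional variety described in Proposition \ref{diff} (a $\PP^1\sqcup\PP^1$-bundle over $Gr(3,4)$), i.e.\ the reduced intersection, and the content of the lemma --- which is what Li's criterion for proper transforms under blow-ups requires --- is that the scheme-theoretic intersection is no bigger, i.e.\ that $\dim\bigl(T_{S(Y),V}\cap T_{T(G),V}\bigr)$ equals $4$ rather than merely being $\geq 10+7-13=4$. Your argument never establishes this; it silently identifies $T(Y)$ with $Z(s|_{T(G)})$, which is exactly the statement to be proved. In your own language the missing step is the surjectivity of $ds_V|_{T_{T(G),V}}\colon T_{T(G),V}\to\cS^\vee_V\cong\CC^3$ at every $V\in T(Y)$, equivalently $T_{S(Y),V}+T_{T(G),V}=T_{S(G),V}$ --- precisely the ``consistency check'' you set aside at the end as a calculation the section-theoretic argument avoids. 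It does not avoid it; that transversality is the theorem.

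This is where the paper does its real work: it shows that the induced map $q\colon N_{T(Y)/T(G),V}\to N_{S(Y)/S(G),V}$ between the two $3$-dimensional normal spaces is an isomorphism, by identifying the source with $H^0(\cO_H(1))$ via the deformation theory of the plane $H=\PP(V_3)$ inside $Y$ (using $h^1(N_{H/Y})=0$, which comes from Proposition \ref{sy552}) and the target with $\mathrm{Hom}(V_3,\CC)$ via the defining sequence \eqref{ker} of $\cK$, and checking that the two identifications match. Your bundle-section framework is a clean way to organize the statement, and it would close up if you verified the surjectivity of $ds_V$ along $T_{T(G),V}$ --- either by the paper's normal-space computation or by differentiating $s$ in the explicit charts of Proposition \ref{diff} --- but as written the argument only proves the vacuous version of the lemma.
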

%\begin{lemm}\label{clean}
%$S(Y)$ intersects with $T(G)$ cleanly along $T(Y)$ in $S(G)$.
%\end{lemm}
\begin{proof}
Consider the tangent bundle sequences
\begin{equation}\label{bundle}
\xymatrix{
0\ar[r]&T_{T(Y), V}\ar[r]\ar@{^{(}->}[d]&T_{T(G),V}\ar[r]\ar@{^{(}->}[d]&N_{T(Y)/T(G),V}\ar[r]\ar[d]^{q}&0\\
0\ar[r]&T_{S(Y), V}\ar[r]&T_{S(G),V}\ar[r]&N_{S(Y)/S(G),V}\ar[r]&0.
}
\end{equation}
To prove the claim, it is suffice to prove that the induced map $q$ in \eqref{bundle} is an isomorphism. But the direct computation says that there exists a commutative diagram
\begin{equation}\label{iden}
\xymatrix{
N_{T(Y)/T(G),V}\ar^{\cong}[r]\ar[d]^q&H^0(\cO_H(1))\ar@{=}[d]\\
N_{S(Y)/S(G),V}\ar^{\cong}[r]& \Hom (V_3, \CC).
}
\end{equation}
for $V=[(V_1, V_4)]\in T^{3,1}(Y)=F^{3,1}(Y)$ (similarly for $T^{2,2}(Y)$). Here the plane $H=\PP(V_3)$ is determined by $V$ (That is, $V_3:= \mathrm{ker}(\wedge^2V_4\lr \wedge^2(V_4/V_1))$). The first horizontal isomorphism in \eqref{iden} comes from the nested normal bundle sequence $$\ses{N_{H/Y}}{N_{H/G}}{N_{Y/G}|_H\cong \cO_H(1)},$$ where $h^1(N_{H/Y})=0$ by Proposition \ref{sy552}. The second horizontal isomorphism comes from the identifications
\begin{align*}
N_{S(Y)/S(G),V}=N_{Gr(3,5)/Gr(3,6),V}&\cong \Hom(V_3, \wedge^2V_4/V_3)/\Hom(V_3, \cK_{[V_4]}/V_3)\\
&\cong \Hom (V_3, \CC)
\end{align*}
by the equation \eqref{ker} and thus we finished the proof of the claim.
\end{proof}
\begin{rema}
Using the local chart obtained by Proposition \ref{diff},  one can directly show that the scheme theoretic intersection is the same as the set-theoretic one. For detail, see \cite{Lee18}.
\end{rema}
%Now we are ready to prove one of main proposition in this paper.
\begin{prop}\label{birmodel}
Let $H_2(Y)$ be the Hilbert scheme of conics in $Y=Y_5^5$. 
Then $H_2(Y)$ is obtained from $S(Y):=Gr(3, \cK)$ by a blow-down followed by a blow-up:
\begin{equation}
\xymatrix{
&\widetilde{S}(Y)\ar[rd]\ar[ld]&\\
S(Y)&& H_2(Y).
}
\end{equation}
In particular, $H_2(Y)$ is smooth, irreducible variety of dimension $10$.
\end{prop}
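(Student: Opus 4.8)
The plan is to realize $H_2(Y)$ as a birational modification of $S(Y)=Gr(3,\cK)$ by restricting the global diagram \eqref{diahil} to the hyperplane section $Y=Y^5_5$. Recall that over $Y^6_5=Gr(2,5)$ we have the two smooth blow-up morphisms $\Xi:H_2(Gr(2,\cU))\to S(G)$ (Proposition \ref{chen}, with center $T(G)=Gr(1,4,5)\sqcup Gr(3,4,5)$) and $\Psi:H_2(Gr(2,\cU))\to H_2(G)$ (Proposition \ref{bicom}, with center $\Delta$). The idea is to cut both sides down by the hyperplane $H=\zero(p_{12}-p_{03})$. The relative Hilbert scheme of conics $\widetilde{\eta}_2:H_2(Gr(2,\cU))\to Gr(4,5)$ restricts to a subvariety $\widetilde{S}(Y)$ consisting of those conics that actually lie in $Y$; this $\widetilde{S}(Y)$ will be the roof of the desired diagram. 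First I would define $\widetilde{S}(Y)$ precisely as the locus of conics contained in $Y$ inside $H_2(Gr(2,\cU))$ and show it is smooth and irreducible of the right dimension, using that $\cK$ in \eqref{ker} is locally free (the choice of the general hyperplane guarantees this, as already noted after \eqref{ker}).

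The key steps are then to verify that the two blow-down/blow-up morphisms survive the restriction. On the $S(G)$ side, the composite $\Xi$ restricts to a morphism $\widetilde{S}(Y)\to S(Y)=Gr(3,\cK)$ which I claim is the blow-up of $S(Y)$ along $T(Y)=S(Y)\cap T(G)$. The crucial input here is Lemma \ref{clean}, which establishes that the intersection $T(Y)=S(Y)\cap T(G)$ is \emph{clean} (scheme-theoretic equals set-theoretic, via the tangent-space identity $T_{T(Y),V}=T_{S(Y),V}\cap T_{T(G),V}$). Cleanness is exactly what lets me transport the smooth blow-up structure of $\Xi$ to its restriction: when a smooth center meets a smooth subvariety cleanly, the blow-up of the ambient space restricts to the blow-up of the subvariety along the intersection. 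Thus $\widetilde{S}(Y)\to S(Y)$ is the smooth blow-up along $T(Y)$, which by the previous proposition and Proposition \ref{diff} is a $\PP^1\sqcup\PP^1$-bundle over the linearly embedded $Gr(3,4)$, hence smooth. On the $H_2(G)$ side, I would argue symmetrically that $\Psi$ restricts to a morphism $\widetilde{S}(Y)\to H_2(Y)$ which is the blow-up of $H_2(Y)$ along $\Delta\cap H_2(Y)$, the locus of conics in $Y$ lying on $\sigma_{2,2}$-planes; by Proposition \ref{sy552} this center is governed by the quadric threefold $\Sigma$, and the normal-space computation in Proposition \ref{bicom} restricts accordingly.

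The main obstacle I anticipate is the $H_2(G)$ side: I must prove that $H_2(Y)$ itself is smooth (and irreducible of dimension $10$) and that $\Psi$ restricts to a genuine smooth blow-down, rather than merely checking this on the $S(Y)$ side where Lemma \ref{clean} does the heavy lifting. The difficulty is that $H_2(Y)$ is the object I am trying to understand, so I cannot assume its smoothness a priori; instead I would deduce it from the smoothness of $\widetilde{S}(Y)$ together with the structure of the exceptional divisor. Concretely, once $\widetilde{S}(Y)\to S(Y)$ is identified as a blow-up of a smooth variety along a smooth center, $\widetilde{S}(Y)$ is smooth of dimension $10$ (since $\dim S(Y)=\dim Gr(3,\cK)=\dim Gr(4,5)+\dim Gr(3,5)=4+6=10$); then I must show the other contraction $\widetilde{S}(Y)\to H_2(Y)$ blows down a smooth divisor to a smooth center, forcing $H_2(Y)$ smooth. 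Here I would invoke \cite[Definition-Proposition 3.4]{CHK}, flagged in the opening of \S\ref{birgeo} as the key point, to control the fibers of the contraction over $\Delta\cap H_2(Y)$ and confirm the blow-down is smooth. Irreducibility of $H_2(Y)$ is already available from Corollary \ref{H12}, so the remaining content is entirely the smoothness and the precise identification of the two contraction centers, both of which reduce to the local/fiberwise analysis set up in Propositions \ref{bicom}, \ref{chen} and Lemma \ref{clean}.
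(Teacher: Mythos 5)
Your proposal follows essentially the same route as the paper: restrict the diagram \eqref{diahil}, use Lemma \ref{clean} (via \cite[Lemma 5.1]{LLi}) to identify $\widetilde{S}(Y)$ as the proper transform and smooth blow-up of $S(Y)$ along $T(Y)$, and then deduce the smoothness of $H_2(Y)$ from the second contraction rather than assuming it a priori. The one step you leave implicit is the paper's concrete mechanism for that second contraction: it checks that the normal bundle of the exceptional divisor restricts to $\cO(-1)$ on the fibers (cf. \cite[Proposition 3.6]{CHK}) and applies the Fujiki--Nakano criterion \cite{FN71} to obtain a smooth blow-down target, which is then identified with $H_2(Y)$ using that $H_2(Y)$ is irreducible (from \eqref{diahil}) and reduced (by repeating the argument of Lemma \ref{smoothness}).
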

\begin{proof}
By Lemma \ref{clean}, the blown-up space $\widetilde{S}(Y)$ is the proper transformation of $S(Y)$ through the blow-up morphism $\Xi:H_2(Gr(2,\cU))\longrightarrow S(Gr(2,5))$ in Proposition \ref{chen} (\cite[Lemma 5.1]{LLi}). One can easily check that the restricted normal bundle of the original one is $\cO(-1)$ (cf. \cite[Proposition 3.6]{CHK}). Applying Fujiki-Nakano criterion (\cite{FN71}), the blow-down space is smooth. Therefore the Hilbert scheme $H_2(Y)$ is smooth whenver $H_2(Y)$ is reduced and irreducible. Irreducibility of $H_2(Y)$ directly comes from the diagram \eqref{diahil}. Also, $H_2(Y)$ is reduced since we can repeat the argument in Lemma \ref{smoothness}. Thus we finish the proof.
\end{proof}

\section{Fano 4-fold $Y_5^4$}\label{sy45}
In this section we let $Y=Y_5^4$ denote the Fano 4-fold defined as the intersection of $G=Gr(2,5)$ with two general hyperplanes $H_1$, $H_2$ in $\PP(\wedge^2\CC^5)= \PP^9$. Let $p_{ij}$ denote the Pl\"ucker coordinates. For explicit calculations, we will let
$$H_1=\{p_{12}-p_{03}=0\},\quad H_2=\{p_{13}-p_{24}=0\}.$$
The results on lines and planes in $Y$ are due to Todd \cite{Todd}. We include elementary proofs of the results for reader's convenience. The result on conics seems new.

\subsection{Fano varieties of lines and planes in $Y_5^4$}
We first recall the results on the Fano variety of planes and lines in $Y$.
The results in this subsection are due to Todd \cite{Todd}.

\begin{lemm} \label{p1} \cite{Todd}
There exists a unique $\sigma_{2,2}$-plane in $Y$, i.e.
there exists a unique plane $\Pi\subset \PP^4$ such that the dual variety $\Pi^\vee$ of
lines in $\Pi$ is contained in $Y$.
\end{lemm}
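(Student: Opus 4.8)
The plan is to translate the geometric condition into the linear algebra of skew-symmetric $2$-forms. By Definition \ref{H4} with $n=5$, a $\sigma_{2,2}$-plane is the set $\Pi^\vee=\{\ell\mid \ell\subset\Pi\}$ of lines contained in a fixed plane $\Pi=\PP(V_3)\subset\PP^4$, where $V_3\subset\CC^5$ is a $3$-dimensional subspace; under the Pl\"ucker embedding these lines sweep out the linear subspace $\PP(\wedge^2 V_3)\cong\PP^2\subset\PP(\wedge^2\CC^5)$. Write $\omega_1,\omega_2\in\wedge^2(\CC^5)^*$ for the $2$-forms whose Pl\"ucker expressions are $p_{12}-p_{03}$ and $p_{13}-p_{24}$, so that $H_1,H_2$ are their zero hyperplanes. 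Then the condition $\Pi^\vee\subset Y$ is equivalent to $\wedge^2 V_3\subset H_1\cap H_2$, i.e. to $\omega_1|_{V_3}=\omega_2|_{V_3}=0$. Thus the lemma reduces to showing that there is a unique $3$-dimensional subspace $V_3\subset\CC^5$ totally isotropic for both $\omega_1$ and $\omega_2$.

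First I would record from the skew matrices $(\omega_i(e_a,e_b))$ that each $\omega_i$ has rank $4$, with $\ker\omega_1=\langle e_4\rangle$ and $\ker\omega_2=\langle e_0\rangle$. For a skew form of rank $4$ on $\CC^5$ the radical has dimension $1$, the quotient is symplectic of dimension $4$, and hence the maximal totally isotropic subspaces have dimension $3$ and necessarily contain the radical (adjoining the radical to an isotropic subspace keeps it isotropic). Consequently any common isotropic $V_3$, being already of maximal dimension for each form, is maximal isotropic for each and therefore $V_3\supseteq \ker\omega_1+\ker\omega_2=\langle e_0,e_4\rangle$.

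This pins down two of the three directions of $V_3$ and leaves only a one-parameter choice: write $V_3=\langle e_0,e_4,v\rangle$ with $v=a_1e_1+a_2e_2+a_3e_3$. Among the pairings on $V_3$, those involving $e_4$ die for $\omega_1$ and those involving $e_0$ die for $\omega_2$, so the surviving isotropy conditions are $\omega_1(e_0,v)=-a_3=0$ and $\omega_2(e_4,v)=a_2=0$. These force $v\in\langle e_1\rangle$, whence $V_3=\langle e_0,e_1,e_4\rangle$ is the unique candidate, and a direct check shows $\omega_1$ and $\omega_2$ both vanish on it. Therefore $\Pi=\PP(\langle e_0,e_1,e_4\rangle)$ is the asserted unique $\sigma_{2,2}$-plane in $Y$.

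There is no deep obstacle here; the computation is short once the setup is correct. The only points requiring genuine care are in the first paragraph: verifying that ``$\Pi^\vee\subset Y$'' is equivalent to the two forms vanishing on all of $V_3$ (a vanishing on the whole space, not merely on individual $2$-planes $V_2\subset V_3$), and reading off the forms $\omega_1,\omega_2$ together with their radicals correctly from the chosen Pl\"ucker coordinates. After the observation that a maximal isotropic subspace must contain the radical of each form, uniqueness is forced by the single two-variable linear computation above.
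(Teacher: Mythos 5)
Your proof is correct, but it follows a genuinely different route from the paper's. The paper fixes an affine chart of $Gr(2,5)$, expands $p_{12}-p_{03}$ and $p_{13}-p_{24}$ in the chart coordinates, and searches directly for a pair of independent linear forms in $x_0,\dots,x_4$ whose common zero locus $\Pi$ forces those two Pl\"ucker expressions to vanish; it finds $x_2=x_3=0$ and then asserts uniqueness ``by the same argument with other charts.'' You instead reformulate the condition intrinsically: each hyperplane $H_i$ is a skew-symmetric $2$-form $\omega_i$ on $\CC^5$, and $\Pi^\vee=\PP(\wedge^2V_3)\subset Y$ exactly when $V_3$ is totally isotropic for both forms; since each $\omega_i$ has rank $4$, a $3$-dimensional isotropic subspace must contain its $1$-dimensional radical, so $V_3\supseteq\langle e_0,e_4\rangle$ and the remaining freedom is killed by the two linear conditions $a_3=a_2=0$, giving $V_3=\langle e_0,e_1,e_4\rangle$ — the same plane $\zero(x_2,x_3)$ the paper finds. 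What your approach buys: it dispenses with the chart-by-chart verification, it isolates exactly where genericity of $H_1,H_2$ enters (both forms have rank $4$ and distinct radicals), and it is the same skew-form formalism the paper itself deploys later for the blow-up centers (Proposition \ref{diff} and Remark \ref{diff4}), so it unifies the treatment. What the paper's computation buys is that it is entirely elementary and produces the explicit linear equations of $\Pi$ with no structure theory of symplectic forms. The only step in your write-up deserving an explicit sentence is the one you already flag: $\omega_i$ vanishing on every $2$-plane $V_2\subset V_3$ is equivalent to $\omega_i|_{V_3}=0$, since any pair of vectors of $V_3$ spans such a $V_2$; with that noted, the argument is complete.
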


\begin{proof}
Consider the open chart
$$ \left(
\begin{matrix}
1 & 0& {a_2} & {a_3} & {a_4} \\
0 & 1& {b_2} & {b_3} & {b_4}
\end{matrix}
\right)$$
of $Gr(2,5)$. Then $p_{12}-p_{03}= -a_2-b_3$ and $p_{13}-p_{24}=-a_3-a_2b_4+a_4b_2$.
Finding $\Pi$ is the same as finding a pair of independent linear equations in $x_0,\cdots,x_4$ such that whenever
both $(1,0,a_2,a_3,a_4)$ and $(0,1,b_2,b_3,b_4)$ satisfy the two equations,
we should have $a_2=-b_3$ and $a_3=a_4b_2-a_2b_4$.
It is straightforward to see that the only such pair of linear equations is $x_2=0=x_3$. By the same argument with other charts,
we find that $\zero(x_2,x_3)$ is the unique plane $\Pi$ with $\Pi^\vee\subset Y.$
\end{proof}
\begin{rema}
The plane $\Pi$ in Lemma \ref{p1} has a crucial role for the structure of the Fano variety $Y^4_5$ (\cite[\S 3]{Pro}, \cite[\S 3]{DIM} and \cite{Fuj17}).
\end{rema}
Recall that a $\sigma_{3,1}$-plane is the set of lines in $\PP^3\subset \PP^4$ passing through a point $p$, which is called the \emph{vertex} of the cycle.
\begin{lemm}\label{p2} \cite{Todd}
There exists a one-dimensional family of $\sigma_{3,1}$-planes in $Y$ whose vertices form a smooth conic $C_0$ in $\Pi$. There is no other plane in $Y$.
\end{lemm}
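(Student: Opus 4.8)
The plan is to describe a $\sigma_{3,1}$-plane by its defining flag and translate the condition of lying in $Y$ into a rank condition on two linear forms. Recall that a $\sigma_{3,1}$-plane is the set $\{V_2\,|\,V_1\subset V_2\subset V_4\}$ attached to a flag $V_1=\langle w\rangle\subset V_4\subset \CC^5$ with $\dim V_4=4$; projectively its vertex is $p=\PP(V_1)$ and it consists of the lines through $p$ inside $\PP(V_4)=\PP^3$. Let $\Omega_1,\Omega_2$ denote the skew-symmetric $2$-forms on $\CC^5$ induced by $H_1=\zero(p_{12}-p_{03})$ and $H_2=\zero(p_{13}-p_{24})$, so that a line $V_2=\langle w,v\rangle$ lies in $H_k$ iff $\Omega_k(w,v)=0$. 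Since $\Omega_k(w,w)=0$ automatically, the whole $\sigma_{3,1}$-plane lies in $Y=G\cap H_1\cap H_2$ if and only if $\Omega_1(w,v)=\Omega_2(w,v)=0$ for every $v\in V_4$; equivalently $V_4\subset w^{\perp_1}\cap w^{\perp_2}$, where $w^{\perp_k}=\ker\Omega_k(w,\cdot)$.

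Next I would observe that a $4$-dimensional $V_4$ containing $w$ exists inside $w^{\perp_1}\cap w^{\perp_2}$ precisely when the two covectors $\Omega_1(w,\cdot)$ and $\Omega_2(w,\cdot)$ are linearly dependent, in which case $V_4$ is uniquely their common kernel (the forms being alternating, $w$ lies in both kernels, so $w\in V_4$ is automatic, and the two covectors cannot both vanish since $\ker\Omega_1\cap\ker\Omega_2=\langle e_4\rangle\cap\langle e_0\rangle=0$). Writing $w=(w_0,\dots,w_4)$, a direct contraction gives
\[
\Omega_1(w,\cdot)=(w_3,-w_2,w_1,-w_0,0),\qquad \Omega_2(w,\cdot)=(0,-w_3,w_4,w_1,-w_2),
\]
so the condition is that
\[
M(w)=\begin{pmatrix} w_3 & -w_2 & w_1 & -w_0 & 0 \\ 0 & -w_3 & w_4 & w_1 & -w_2 \end{pmatrix}
\]
have rank $\le 1$.

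Then I would read off the $2\times 2$ minors: those in columns $(1,2)$ and $(2,5)$ give $w_3^2=0$ and $w_2^2=0$, forcing $w_2=w_3=0$, i.e. the vertex lies on $\Pi=\zero(x_2,x_3)$ of Lemma \ref{p1}; with $w_2=w_3=0$ the only surviving minor is in columns $(3,4)$, namely $w_1^2+w_0w_4$. Hence the locus of admissible vertices is exactly the conic
\[
C_0=\{\,[w]\in\Pi\,|\,w_1^2+w_0w_4=0\,\},
\]
which is smooth, and for each $[w]\in C_0$ the plane (equivalently $V_4$) is uniquely recovered as the common kernel of $\Omega_1(w,\cdot),\Omega_2(w,\cdot)$. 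This exhibits the $\sigma_{3,1}$-planes in $Y$ as a one-dimensional family in bijection with $C_0$.

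Finally, for the last assertion I would invoke Lemma \ref{H6}(4): every plane in $Gr(2,5)$ is of type $\sigma_{3,1}$ or $\sigma_{2,2}$. The $\sigma_{2,2}$-planes in $Y$ reduce to the single plane $\Pi^\vee$ by Lemma \ref{p1}, while the computation above accounts for all $\sigma_{3,1}$-planes, so $F_2(Y)=C_0\sqcup\{\Pi^\vee\}$ and there are no further planes. The step I would be most careful with — the only genuine subtlety — is the case analysis showing that the rank-$\le 1$ condition is exactly equivalent to the existence of a (necessarily unique) $4$-dimensional $V_4$, including the degenerate vertices $e_0,e_4\in C_0$ at which one of the two contracted covectors vanishes; everything else is the routine minor computation above.
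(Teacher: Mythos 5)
Your proof is correct and follows essentially the same route as the paper's: both reduce containment of a $\sigma_{3,1}$-plane in $Y$ to the proportionality of the two linear forms $\Omega_1(w,\cdot),\Omega_2(w,\cdot)$ on the $\PP^3$ of lines through the vertex, and read off $w_2=w_3=0$, $w_1^2+w_0w_4=0$ from the $2\times 2$ minors (the paper does this chart by chart with the matrix $\bigl(\begin{smallmatrix} a_2&-a_1&1&0\\ -a_3&a_4&a_1&-a_2\end{smallmatrix}\bigr)$, which is exactly your $M(w)$ in the chart $w_0=1$). Your global formulation via the skew forms, the explicit treatment of the degenerate vertices $e_0,e_4$, and the citation of Lemma \ref{H6}(4) for the exhaustiveness of the $\sigma_{3,1}/\sigma_{2,2}$ dichotomy are slightly more complete than the paper's write-up but do not constitute a different argument.
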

\begin{proof}
Let $(1, a_1,a_2,a_3,a_4)$ be the vertex of a $\sigma_{3,1}$-plane. Then a line in the plane is represented by
$$\left(\begin{matrix} 1&a_1&a_2&a_3&a_4\\ 0&b_1&b_2&b_3&b_4\end{matrix}\right).$$
Since $p_{12}-p_{03}=a_1b_2-a_2b_1-b_3$ and $p_{13}-p_{24}=a_1b_3-a_3b_1-a_2b_4+a_4b_2$, these two linear equations in $(b_1,b_2,b_3,b_4)$ are parallel if and only if $$\rank \left(\begin{matrix} a_2&-a_1&1&0\\ -a_3&a_4&a_1&-a_2\end{matrix}\right)=1.$$
This holds if and only if $a_2=a_3=0$ and $a_4+a_1^2=0$. The first equation says the vertex lies in the plane $\Pi$ in Lemma \ref{p1} and the second says the locus of the vertices is a smooth conic in $\Pi$. By the same argument with other charts, we obtain the lemma.
\end{proof}

\begin{coro}
The Fano variety of planes in $Y$ is $C_0\sqcup \{\Pi\}$.
\end{coro}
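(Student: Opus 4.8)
The plan is to read the statement off directly from the two preceding lemmas, combined with the classification of planes in $Gr(2,5)$. Recall from Lemma \ref{H6}(4) (Landsberg's description) that every plane in $G=Gr(2,5)$ is of exactly one of two types: a $\sigma_{3,1}$-plane, consisting of the lines in some $\PP^3\subset\PP^4$ through a fixed vertex, parameterized by $Gr(1,4,5)$; or a $\sigma_{2,2}$-plane $\Pi^\vee$, consisting of all lines lying in a fixed plane $\Pi\subset\PP^4$, parameterized by $Gr(3,5)$. Since $F_2(G)=Gr(1,4,5)\sqcup Gr(3,5)$ splits these into two disjoint connected components, the Fano variety $F_2(Y)\subset F_2(G)$ likewise decomposes as the disjoint union of its locus of $\sigma_{3,1}$-planes and its locus of $\sigma_{2,2}$-planes, and these two strata are automatically disjoint since a plane cannot be of both types.

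For the $\sigma_{2,2}$-stratum, I would invoke Lemma \ref{p1}, which shows there is a \emph{unique} plane $\Pi=\zero(x_2,x_3)\subset\PP^4$ with $\Pi^\vee\subset Y$; this contributes the single point $\{\Pi\}$ to $F_2(Y)$.

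For the $\sigma_{3,1}$-stratum, I would use the vertex map $\psi$ sending a $\sigma_{3,1}$-plane to its vertex, exactly as in Proposition \ref{sy552}. By Lemma \ref{p2}, a $\sigma_{3,1}$-plane lies in $Y$ if and only if its vertex lies on the smooth conic $C_0\subset\Pi$, and for each such vertex the containing $\PP^3$ is uniquely determined (the rank-one condition $\rank\left(\begin{smallmatrix} a_2&-a_1&1&0\\ -a_3&a_4&a_1&-a_2\end{smallmatrix}\right)=1$ in the proof of Lemma \ref{p2} pins down the two defining hyperplanes). Thus $\psi$ restricts to a bijection from the $\sigma_{3,1}$-stratum of $F_2(Y)$ onto $C_0$; reading off the affine charts used in Lemma \ref{p2} shows this bijection is in fact an isomorphism, so the stratum is $C_0$.

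Combining the two strata gives $F_2(Y)=C_0\sqcup\{\Pi\}$. The only point that needs genuine care — and which I expect to be the main (minor) obstacle — is upgrading the vertex map from a bijection to an isomorphism onto $C_0$; this I would settle by the explicit chart computation already present in Lemma \ref{p2}. If one additionally wants the scheme structure on $F_2(Y)$ to be reduced, the Cohen--Macaulay and generic-reducedness argument of Lemma \ref{smoothness} applies verbatim.
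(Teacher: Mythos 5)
Your proposal is correct and matches the paper's (implicit) argument: the corollary is stated there without a separate proof precisely because it is the immediate combination of Lemma \ref{p1} (a unique $\sigma_{2,2}$-plane $\Pi$) and Lemma \ref{p2} (the $\sigma_{3,1}$-planes are parameterized by their vertices on $C_0$, and there are no other planes), using the two-type classification of planes in $Gr(2,5)$. Your extra remarks on upgrading the vertex bijection to an isomorphism and on reducedness are sensible refinements but not a different route.
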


\begin{prop}\label{l1}\cite{Todd}
Let $H_1(Y)$ denote the Hilbert scheme of lines in $Y=Y^4_5$. Then $H_1(Y)$ is isomorphic to the blowup of $\PP^4$ along the smooth conic $C_0\subset \Pi$ in Lemma \ref{p2}.
\end{prop}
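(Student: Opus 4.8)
The plan is to study lines in $Y=Y_5^4$ through their \emph{vertex}, in direct analogy with the proof of Proposition~\ref{sy551} for the $5$-fold. By Proposition~\ref{G1}(1) every line in $G=Gr(2,5)$ is the pencil of lines in a plane $\Lambda\subset\PP^4$ through a point $p\in\Lambda$, so $H_1(Y)=F_1(Y)$, and composing the inclusion $F_1(Y)\hookrightarrow F_1(G)=Gr(1,3,5)$ with the vertex morphism $\eta_1$ of Corollary~\ref{H2}(1) yields a projective morphism
\[
\psi:H_1(Y)\lra \PP^4,\qquad (p,\Lambda)\longmapsto p.
\]
First I would prove that $\psi$ is surjective with the expected fibers, next identify $\psi$ with the blow-up of $\PP^4$ along $C_0$, and finally pin down the scheme structure of $H_1(Y)$.

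For the fibers, encode the two hyperplanes $H_1,H_2$ as alternating $2$-forms $\Omega_1,\Omega_2$ on $\CC^5$, as in the proof of Proposition~\ref{diff}. A line with vertex $[v]$ and plane $V_3\supset\langle v\rangle$ is the family $\{v\wedge w : w\in V_3\}$, and it lies in $Y$ precisely when $\Omega_1(v,w)=\Omega_2(v,w)=0$ for all $w\in V_3$, i.e. when $V_3\subseteq K_v:=\ker\Omega_1(v,\cdot)\cap\ker\Omega_2(v,\cdot)$. Since the functionals $\Omega_1(v,\cdot),\Omega_2(v,\cdot)$ never vanish simultaneously for $v\ne 0$, one has $v\in K_v$ and $\dim K_v\in\{3,4\}$, so $\psi^{-1}([v])=\{V_3 : \langle v\rangle\subset V_3\subseteq K_v\}$ is a single point when $\dim K_v=3$ and a $Gr(2,K_v/\langle v\rangle)\cong\PP^2$ when $\dim K_v=4$; in particular $\psi$ is surjective. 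The degeneracy locus where $\Omega_1(v,\cdot),\Omega_2(v,\cdot)$ become linearly dependent is the common zero of the $2\times2$ minors of the $2\times5$ matrix with rows $\Omega_1(v,\cdot)$ and $\Omega_2(v,\cdot)$; a short computation shows these minors cut out exactly the smooth conic $C_0=\{x_2=x_3=0,\ x_1^2+x_0x_4=0\}$ of Lemma~\ref{p2}. Thus $\psi$ is bijective over $\PP^4\setminus C_0$ and has $\PP^2$-fibers over $C_0$, matching the numerics of the blow-up of $\PP^4$ along the codimension-$3$ curve $C_0$.

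To upgrade this to an isomorphism with $\mathrm{bl}_{C_0}\PP^4$, I would carry out the explicit local-chart computation exactly as in Proposition~\ref{sy551}. Choosing a chart of $Gr(1,3,5)$ lying over a neighborhood of a point of $C_0$ and imposing the two Pl\"ucker relations $p_{12}-p_{03}=0$ and $p_{13}-p_{24}=0$ that define $H_1(Y)$, one finds local equations presenting $\psi$ as the blow-up of $\PP^4$ along $C_0$, with the exceptional $\PP^2$ identified with $\PP(N_{C_0/\PP^4})$. This shows simultaneously that $\psi^{-1}(C_0)$ is a Cartier divisor---so by the universal property $\psi$ factors through $\mathrm{bl}_{C_0}\PP^4$---and that the induced map is an isomorphism onto the (reduced) blow-up.

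Finally, because the statement is about the Hilbert scheme $H_1(Y)$, I must check there is no non-reduced or embedded structure. As $\mathrm{bl}_{C_0}\PP^4$ is smooth (both $\PP^4$ and $C_0$ being smooth), it suffices to show $H_1(Y)$ is reduced, for which I would reproduce the argument of Lemma~\ref{smoothness}: every line $L\subset Y$ is a local complete intersection with $\chi(N_{L/Y})=4=\dim H_1(Y)$, so $H_1(Y)$ is Cohen--Macaulay, and verifying $h^1(N_{L/Y})=0$ for the single explicit line with rows $(1,0,0,0,0)$, $(0,s,t,0,0)$ gives generic reducedness, hence reducedness. I expect the genuine obstacle to be the bookkeeping in the local-chart identification with the blow-up: the fiber count and the identification of the degeneracy locus with $C_0$ are painless, but matching the \emph{scheme}-theoretic blow-down of the exceptional $\PP^2$-bundle requires charts adapted to $C_0$ and careful tracking of the two Pl\"ucker equations through the coordinates of $Gr(1,3,5)$---precisely the delicate part already present in the $5$-fold case.
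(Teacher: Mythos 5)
Your proposal follows the paper's proof essentially verbatim: the same vertex morphism $\psi:H_1(Y)\to\PP^4$, the same fiber analysis (a single point off $C_0$, a $\PP^2$ over $C_0$, with the degeneracy condition recovering the conic of Lemma \ref{p2}), the same deferral to an explicit local-chart computation as in Proposition \ref{sy551} to identify $\psi$ with the blow-up, and the same reducedness/smoothness argument borrowed from Lemma \ref{smoothness}. The only cosmetic difference is that you phrase the fiber computation via the skew forms $\Omega_1,\Omega_2$ (in the spirit of Proposition \ref{diff}) rather than via the rank condition appearing in the proof of Lemma \ref{p2}; these are the same calculation.
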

\begin{proof}
Recall that a line $Z$ in $G=Gr(2,5)$ is the set of lines in a plane $P$ in $\PP^4$ passing through a point $p\in P$, which we call the vertex of the line $Z$. Assigning the vertex to a line in $V$ gives a morphism
$$\psi:H_1(Y)\subset H_1(G)=Gr(1,3,5)\lra  Gr(1,5)=\PP^4.$$

By the proof of Lemma \ref{p2}, $y=(1,a_1,a_2,a_3,a_4)\in \PP^4$, if $v\notin C_0$,
the $\sigma_{3,1}$-cycle $\sigma_{3,1}(v)$ with vertex $v$ intersects with $Y$ along a line. If $v\in C_0$,  $\sigma_{3,1}(v)$ intersects with $Y$ along a plane $P(v)$ in $Y$. Hence $\psi^{-1}(y)$ is a point for $y\notin C_0$ and $\PP^2$ for $y\in C_0$.

By explicit local chart calculation as in the proof of Proposition \ref{sy551}, it is again straightforward to show that $\psi$ is the blowup map along $C_0$. Also, by the same argument in Lemma \ref{smoothness}, one can show that the Hilbert scheme $H_1(Y)$ is smooth and thus finish the proof.
\end{proof}

\begin{prop}\label{l2} Let $C^\vee_0\subset H_1(Y)$ denote the dual conic of tangent lines to $C_0$ in $\Pi$.
Let $Z\in H_1(Y)$ be a line in $Y$. Then the normal bundle $N_{Z/Y}$ is $\cO_{Z}^{\oplus 2}\oplus \cO_Z(1)$ if $Z\notin C_0^\vee$ and $\cO_{Z}(-1)\oplus \cO_Z(1)^{\oplus 2}$ if $Z\in C_0^\vee$.
\end{prop}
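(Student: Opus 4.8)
The plan is to pin down the splitting type of the rank-$3$ bundle $N_{Z/Y}$ on $Z\cong\PP^1$ from a degree count together with a single rank condition on a constant matrix. Since $Y=Y_5^4$ has index $3$, adjunction gives
\[
\deg N_{Z/Y}=(-K_Y\cdot Z)-\deg T_Z=3-2=1,
\]
so $N_{Z/Y}\cong\bigoplus_{i=1}^{3}\cO_Z(a_i)$ with $a_1\le a_2\le a_3$ and $a_1+a_2+a_3=1$. By Proposition \ref{l1} the Hilbert scheme $H_1(Y)$ is smooth of dimension $4$, whence $h^0(N_{Z/Y})=4$ and therefore $h^1(N_{Z/Y})=0$ for every line $Z$; in particular $a_1\ge-1$. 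Both bundles appearing in the statement satisfy $h^0=4$ and $h^1=0$, so they cannot be separated cohomologically. Instead I will detect the type $(-1,1,1)$ by the presence of an embedding $\cO_Z(1)^{\oplus2}\hookrightarrow N_{Z/Y}$: among bundles with $a_i\ge-1$ and $\sum a_i=1$, such an embedding forces two of the $a_i$ to be $\ge1$, hence $a_1=-1$ and $a_2=a_3=1$.

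First I would compute the ambient normal bundle $N_{Z/G}$, which is uniform because $G=Gr(2,5)$ is homogeneous. Writing $Z=\{\,V_2:V_1\subset V_2\subset V_3\,\}$ and restricting the universal bundles, one finds $\cS|_Z\cong\cO_Z\oplus\cO_Z(-1)$ and $\cQ|_Z\cong\cO_Z(1)\oplus\cO_Z^{\oplus2}$, so from $T_G|_Z\cong\cS^\vee|_Z\otimes\cQ|_Z$ and $T_Z\cong\cO_Z(2)$ we get
\[
N_{Z/G}\cong\cO_Z(1)^{\oplus3}\oplus\cO_Z^{\oplus2}.
\]
Because $Y=G\cap H_1\cap H_2$ we have $N_{Y/G}|_Z\cong\cO_Z(1)^{\oplus2}$, and the nested normal bundle sequence becomes
\[
0\to N_{Z/Y}\to\cO_Z(1)^{\oplus3}\oplus\cO_Z^{\oplus2}\xrightarrow{\ \phi\ }\cO_Z(1)^{\oplus2}\to0 ,
\]
where $\phi$ is given by the differentials of the two Plücker forms cutting out $H_1$ and $H_2$. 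The restriction of $\phi$ to the degree-one summand $A:=\cO_Z(1)^{\oplus3}$ is a homomorphism $\cO_Z(1)^{\oplus3}\to\cO_Z(1)^{\oplus2}$ and is therefore given by a \emph{constant} $2\times3$ matrix $M_A$, while on $B:=\cO_Z^{\oplus2}$ it has entries that are linear forms on $Z$.

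The next step is a short linear-algebra lemma showing that the splitting type is governed entirely by $\rank M_A$. If $\rank M_A=2$, then $\phi|_A$ is surjective with $\ker(\phi|_A)\cong\cO_Z(1)$, and projecting $\ker\phi$ onto $B$ presents $N_{Z/Y}$ as an extension $0\to\cO_Z(1)\to N_{Z/Y}\to\cO_Z^{\oplus2}\to0$; since $\mathrm{Ext}^1(\cO_Z^{\oplus2},\cO_Z(1))=0$ this splits and $N_{Z/Y}\cong\cO_Z^{\oplus2}\oplus\cO_Z(1)$. If $\rank M_A=1$, then $\ker(\phi|_A)\cong\cO_Z(1)^{\oplus2}$ sits inside $N_{Z/Y}$, giving the embedding $\cO_Z(1)^{\oplus2}\hookrightarrow N_{Z/Y}$; by the first paragraph this forces $N_{Z/Y}\cong\cO_Z(-1)\oplus\cO_Z(1)^{\oplus2}$. (The possibility $\rank M_A=0$ is excluded, as it would embed the degree-$3$ bundle $A$ into the degree-$1$ bundle $N_{Z/Y}$.)

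The hard part is the remaining identification $\rank M_A=1\iff Z\in C_0^\vee$, for which explicit coordinates seem unavoidable. I would parameterize a line $Z\subset Y$ by its flag $(V_1,V_3)=(\langle v\rangle,W)$ as in Proposition \ref{G1}(1), substitute the associated family of $2$-planes into the two defining forms $p_{12}-p_{03}$ and $p_{13}-p_{24}$, and read off the normal (linear) part to obtain $M_A$ explicitly in terms of $v$ and $W$. The locus where $\rank M_A$ drops to $1$ is cut out by a quadratic condition; running the same chart computations that in Lemma \ref{p2} and Proposition \ref{l1} already single out the conic $C_0\subset\Pi$, I expect this degeneracy locus to be exactly the family of lines dual to the tangent lines of $C_0$, namely the pencils of lines of $\Pi$ through the points of $C_0$, which is the dual conic $C_0^\vee$ inside the $\sigma_{2,2}$-plane $\Pi^\vee$. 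Completing the remaining affine charts as in the preceding proofs would finish the argument.
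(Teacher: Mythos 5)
Your structural reduction is sound and, if anything, more systematic than the paper's: the degree count $\deg N_{Z/Y}=1$, the vanishing $h^1(N_{Z/Y})=0$ extracted from the smoothness of $H_1(Y)$ in Proposition \ref{l1}, the computation $N_{Z/G}\cong\cO_Z(1)^{\oplus3}\oplus\cO_Z^{\oplus2}$, and the observation that the splitting type is governed by the rank of the constant block $M_A$ are all correct (in the degenerate case, any injection $\cO_Z(1)^{\oplus2}\hookrightarrow\bigoplus\cO_Z(a_i)$ must factor through the summands with $a_i\ge1$, which together with $\sum a_i=1$ and $a_i\ge-1$ does force the type $(-1,1,1)$, as you say). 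The paper argues along a different filtration: it considers only lines $Z$ contained in the $\sigma_{2,2}$-plane $\Pi^\vee$ (which is where $C_0^\vee$ lives), uses $0\to N_{Z/\Pi^\vee}\to N_{Z/Y}\to N_{\Pi^\vee/Y}|_Z\to0$ with $N_{Z/\Pi^\vee}\cong\cO_Z(1)$, and computes $N_{\Pi^\vee/Y}|_Z$ as the kernel of an explicit map $\cO_Z^{\oplus2}\oplus\cO_Z(1)^{\oplus2}\to\cO_Z(1)^{\oplus2}$ whose constant block has determinant $a_1^2+a_4$; the vanishing of that determinant is precisely the equation of $C_0$ found in Lemma \ref{p2}. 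Your route through $N_{Z/G}$ has the merit of applying uniformly to every line of $Y$, not only to those inside $\Pi^\vee$.

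However, as written your argument has a genuine gap at the decisive step: the identification of the degeneracy locus $\{\rank M_A=1\}$ with $C_0^\vee$ is only announced (``I expect this degeneracy locus to be exactly\ldots''), not proved. Everything before that point is formal and would apply verbatim to any smooth codimension-two linear section of $Gr(2,5)$ with smooth four-dimensional Hilbert scheme of lines; the specific geometry of $Y^4_5$ --- and hence the actual content of the proposition --- enters only through this computation. To close the gap you must write $M_A$ explicitly in terms of the flag $(V_1,V_3)$ of $Z$ and the two forms $p_{12}-p_{03}$, $p_{13}-p_{24}$, and check both directions: that $\rank M_A=2$ for every line outside $C_0^\vee$ (in particular for all lines not contained in $\Pi^\vee$, a four-dimensional family the paper's own proof does not touch), and that $\rank M_A=1$ exactly for the pencils of lines of $\Pi$ through the points of $C_0$. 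This is a finite chart computation of the same kind as in Lemma \ref{p2} and Proposition \ref{l1}, so the gap is fillable, but until it is carried out the dichotomy of the proposition is not established.
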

\begin{proof}
Let $Z=(1,a_1,a_2,a_3,a_4)$ be a line in $\Pi^\vee\subset Y$. Suppose the line $Z$
is represented by
\[ \left(
\begin{matrix}
1 & a_1 & {a_2} & {a_3} & {a_4} \\
0 & b_1 & {b_2} & {b_3} & {b_4}
\end{matrix}
\right)
\]
with $(a_1,b_1)$ the homogeneous coordinates for $Z$. Then $a_2,a_3,b_2,b_3$ are the coordinates for the normal bundle $N_{\Pi^\vee/G}|_Z$ with $a_2, a_3$ giving $\cO_Z^{\oplus 2}$ while $b_2,b_3$ giving $\cO_Z(1)^{\oplus 2}$. The two equations $p_{12}-p_{03}$ and $p_{13}-p_{24}$ give us homomorphisms
$$\cO_Z^{\oplus 2}\oplus \cO_Z(1)^{\oplus 2}\lra \cO(1)^{\oplus 2}, \quad (a_2,a_3,b_2,b_3)\mapsto (a_1b_2-a_2b_1-b_3, a_1b_3-a_3b_1-a_2b_4+a_4b_2).$$
If $a_4+a_1^2\ne 0$, then the kernel of this homomorphism is $\cO_Z^{\oplus 2}.$ If $a_4+a_1^2=0$, then the kernel is $\cO_Z(-1)\oplus \cO_Z(1)$. The equation $a_4+a_1^2=0$ defines the conic $C_0$.
Since the normal bundle $N_{Z/\Pi^\vee}$ is $\cO_Z(1)$, the exact sequence $0\to N_{Z/\Pi^\vee}\to N_{Z/Y}\to N_{\Pi^\vee/Y}|_Z\to 0$ splits to $N_{Z/Y}\cong \cO_Z(1)\oplus N_{\Pi^\vee/Y}|_Z$. So we have the proposition.
\end{proof}

\subsection{Conics in $Y^4_5$}\label{birgeo}
In this subsection, we study the birational relation between $H_2(Y_5^4)$ and a projective model by using the diagram \eqref{diahil}.
\begin{prop}\label{birmodel2}
Let $H_2(Y)$ be the Hilbert scheme of conics in $Y=Y_5^4$. Let $\cU$ be the universal subbundle on $Gr(4,5)$. Let \[\cK:= \mathrm{ker}\{\wedge^2\cU\subset \wedge^2 \cO^{\oplus 5}\to \cO^{\oplus 2} \}\] be the kernel of the composition map where the second arrow is $(p_{12}-p_{03},p_{13}-p_{24})$.
Then $H_2(Y)$ is obtained from $S(Y):=Gr(3, \cK)$ by a blow-down followed by a blow-up:
\begin{equation}
\xymatrix{
&\widetilde{S}(Y)\ar[rd]\ar[ld]&\\
S(Y)&& H_2(Y).
}
\end{equation}
In particular, $H_2(Y)$ is an irreducible, smooth variety of dimension $7$.
\end{prop}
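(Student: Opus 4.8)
The plan is to transport the entire blow-up/blow-down picture of the diagram \eqref{diahil} to the sublocus cut out by the two hyperplanes $H_1,H_2$, exactly as in the proof of Proposition \ref{birmodel} for the $5$-fold (this is the conic analogue of replacing $H_1$ by $H_1\cap H_2$ in Lemma \ref{H8}). First I would check that $\cK$ is locally free: the two forms $p_{12}-p_{03}$ and $p_{13}-p_{24}$ restrict to linearly independent functionals on $\wedge^2 V_4$ for every $[V_4]\in Gr(4,5)$, so that the composition $\wedge^2\cU\to \cO^{\oplus 2}$ is fibrewise surjective and $\cK$ has constant rank $6-2=4$. Consequently $S(Y)=Gr(3,\cK)$ is a smooth $Gr(3,4)=\PP^3$-bundle over $Gr(4,5)=\PP^4$, hence smooth and irreducible of dimension $4+3=7$; this already pins down $\dim H_2(Y)=7$ once the birational modification is in place.

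Next I would set $T(Y):=S(Y)\cap T(G)$ inside $S(G)=Gr(3,\wedge^2\cU)$ and prove the two-hyperplane analogue of Lemma \ref{clean}, namely that the intersection is scheme-theoretically clean:
\[
T_{T(Y),V}=T_{S(Y),V}\cap T_{T(G),V}\qquad\text{for all } V\in T(Y).
\]
The only change from Lemma \ref{clean} is that the second arrow now lands in $\cO^{\oplus 2}$, so the conormal identification becomes $N_{S(Y)/S(G),V}\cong \mathrm{Hom}(V_3,\CC^2)$ (from $\wedge^2 V_4/\cK_{[V_4]}\cong \CC^2$), which must be matched against $N_{T(Y)/T(G),V}\cong H^0(\cO_H(1)^{\oplus 2})$ coming from the nested normal bundle sequence
\[
0\to N_{H/Y}\to N_{H/G}\to N_{Y/G}|_H\cong \cO_H(1)^{\oplus 2}\to 0
\]
for the planes $H$ classified in Lemma \ref{p2}. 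Here I would use the smoothness of $F_2(Y)=C_0\sqcup\{\Pi\}$, so that $h^1(N_{H/Y})=0$ and the comparison map $q$ of Lemma \ref{clean} is an isomorphism. In this $4$-fold setting $T^{3,1}(Y)$ corresponds to the $\sigma_{3,1}$-planes whose vertices sweep out the conic $C_0$, and $T^{2,2}(Y)$ to the unique $\sigma_{2,2}$-plane $\Pi$ of Lemma \ref{p1}.

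With clean intersection in hand, the strict transform $\widetilde{S}(Y)$ of $S(Y)$ under the blow-up $\Xi$ of Proposition \ref{chen} equals the blow-up of $S(Y)$ along $T(Y)$ by \cite[Lemma 5.1]{LLi}, and the restriction of $\Psi$ realizes $\widetilde{S}(Y)\to H_2(Y)$ as the contraction of the conics lying in $\Pi$. Checking that the restricted normal bundle of the exceptional locus is $\cO(-1)$, as in \cite[Proposition 3.6]{CHK}, and invoking the Fujiki--Nakano criterion \cite{FN71}, shows that the blow-down target is smooth; hence $H_2(Y)$ is smooth as soon as it is reduced and irreducible. Irreducibility follows from the ($Y$-relative version of the) diagram \eqref{diahil}, and reducedness from the Cohen--Macaulay plus generically-reduced argument already used in Lemma \ref{smoothness}, giving a smooth irreducible variety of dimension $7$.

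I expect the main obstacle to be the clean-intersection step. One must verify that with \emph{two} linear forms the relative conormal bookkeeping still matches the absolute plane geometry of $Y^4_5$: that the single $\sigma_{2,2}$-plane $\Pi$ and the one-parameter family $C_0$ of $\sigma_{3,1}$-planes feed correctly into $T^{2,2}(Y)$ and $T^{3,1}(Y)$, and that the two forms remain independent on all relevant fibres so that $\cK$ is genuinely locally free and the contraction behaves exactly as in the $Y^5_5$ case. Once the local-chart computation confirms the diagram \eqref{iden} analogue with $\CC$ replaced by $\CC^2$, the rest of the argument is formally identical to Proposition \ref{birmodel}.
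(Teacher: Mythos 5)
Your proposal is correct and follows exactly the route the paper intends: the paper's own proof of this proposition is a one-line reference to carrying out the argument of Proposition \ref{birmodel} in parallel, and your write-up simply makes that parallel explicit (local freeness of $\cK$, the clean-intersection analogue of Lemma \ref{clean} with $\CC$ replaced by $\CC^2$, the transported blow-up/blow-down via \cite[Lemma 5.1]{LLi} and Fujiki--Nakano, and the reducedness/irreducibility arguments). No substantive differences from the paper's approach.
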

\begin{proof}
The proof can be done by the parallel way of that of Proposition \ref{birmodel}.
\end{proof}

\begin{rema}\label{diff4}
As the $5$-fold case in Proposition \ref{diff}, the blow-up center $T(Y)\subset T(G)=Gr(1,4,5)\sqcup Gr(3,4,5) \lr Gr(4,5)$ in $S(Y)$ has a bundle structure over its base. In fact,
the intersection $T(Y)$ is a disjoint $\PP^0\sqcup \PP^0$-bundle over $\PP^1$ which is linearly embedded in $Gr(4,5)$. Each fiber of $T(Y)$ over $[V_4]\in \PP^1$ is the following: \[ \{((\ker(\Omega|_{V_4})\cap\ker(\Omega|_{V_4}'),V_4), ((\ker (\Omega|_{V_4})+\mathrm{ker}(\Omega|_{V_4}')),V_4)\} \subset Gr(1,4,5)\sqcup Gr(3,4,5). \] Here $\Omega$ (resp. $\Omega')$ is the skew-symmetric $2$-form on V induced from $p_{12}-p_{03}$ (resp. $p_{13}-p_{24}$).
\end{rema}

\section{Fano 3-fold $Y_5^3$}\label{sy35}
The method of proofs in previous sections can be applied to the Fano 3-fold, which enables us to re-prove the well-known results about the space of lines and conics.
For explicit calculations, we let
$$H_1=\{p_{12}-p_{03}=0\},\quad H_2=\{p_{13}-p_{24}=0\}, \quad H_3=\{p_{14}-p_{02}=0\}.$$

\begin{prop}
The Hilbert scheme $F_1(Y)$ of lines is isomorphic to a projective plane $\PP^2$.
\end{prop}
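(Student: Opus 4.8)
The plan is to analyze the vertex morphism exactly as in Propositions~\ref{sy551} and~\ref{l1}. A line $Z\subset G=Gr(2,5)$ is a pencil of lines through a vertex $y\in\PP^4$ contained in a plane $P$ with $y\in P$, so assigning the vertex gives a morphism $\psi\colon F_1(Y)\subset F_1(G)=Gr(1,3,5)\to Gr(1,5)=\PP^4$. Let $\omega_1,\omega_2,\omega_3\in\wedge^2(\CC^5)^*$ be the $2$-forms defining $H_1,H_2,H_3$. The condition $Z\subset Y$ forces the three Pl\"ucker equations to vanish along the whole pencil, which translates into the requirement that the plane $P$ lie in the common kernel of the contracted $1$-forms $\iota_y\omega_1,\iota_y\omega_2,\iota_y\omega_3$. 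First I would assemble these into the $3\times 5$ matrix $M(y)$ with rows $\iota_y\omega_i$, note that $M(y)\,y=0$ identically, and read off the fiber dichotomy: $\psi^{-1}(y)$ is empty if $\rank M(y)=3$, a single point with $P=\ker M(y)$ if $\rank M(y)=2$, and positive-dimensional only if $\rank M(y)\le1$.

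The crucial step is to identify the image $S=\{y:\rank M(y)\le2\}$. The rows of $M(y)$ are dependent exactly when $\iota_y\omega_\lambda=0$ for some $0\ne\lambda=[\lambda_1:\lambda_2:\lambda_3]$, where $\omega_\lambda=\lambda_1\omega_1+\lambda_2\omega_2+\lambda_3\omega_3$ ranges over the net $\PP^2=\PP\langle\omega_1,\omega_2,\omega_3\rangle$. Since $\omega_\lambda$ is a skew form on the odd-dimensional space $\CC^5$, its kernel is spanned by the Pfaffian vector of the associated skew matrix $A(\lambda)$, whose five entries $\pm\mathrm{Pf}(A(\lambda)_{\hat\imath})$ are quadratic in $\lambda$. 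Computing them for the given $H_i$ yields the kernel line $y(\lambda)=[\,\lambda_2^2:\lambda_1\lambda_2:\lambda_1\lambda_3:-\lambda_3^2:\lambda_2\lambda_3-\lambda_1^2\,]$. This exhibits $\nu\colon\PP^2\to\PP^4$, $[\lambda]\mapsto y(\lambda)$, as the Veronese embedding $\PP^2\hookrightarrow\PP^5$ followed by a linear projection, and $\nu$ is visibly the inverse of $\psi$: to $[\lambda]$ one attaches the vertex $y(\lambda)$, the plane $P=\ker M(y(\lambda))$, and the resulting pencil $Z_\lambda\subset Y$.

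It then remains to prove $\nu$ is an isomorphism onto $S$. The five quadrics are linearly independent and have no common zero (from $\lambda_2^2=\lambda_3^2=\lambda_2\lambda_3-\lambda_1^2=0$ one gets $\lambda=0$), so $\nu$ is a morphism and the centre of the projection is the single point $[\lambda_1^2+\lambda_2\lambda_3]\in\PP^5$. As this point represents a nondegenerate (rank $3$) conic, it lies off the secant variety of the Veronese surface, i.e. off the locus of forms of rank $\le 2$; projecting the Veronese from such a point is an embedding. Hence $S\cong\PP^2$, and in particular $\rank M(y)\le1$ never occurs, so $\psi$ is a bijection onto $S$. To upgrade this to an isomorphism of schemes I would repeat the argument of Lemma~\ref{smoothness}: verify $h^1(N_{Z/Y})=0$ for one explicit line $Z$, deduce via \cite{KOL} that $F_1(Y)$ is Cohen--Macaulay of the expected dimension $2$; being generically reduced it is reduced, so $F_1(Y)=F_1(Y)_{\mathrm{red}}\cong S\cong\PP^2$.

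The main obstacle is the identification $S\cong\PP^2$, that is, showing the net $\langle\omega_1,\omega_2,\omega_3\rangle$ contains no $2$-form of rank $\le 2$, equivalently that the projection centre $[\lambda_1^2+\lambda_2\lambda_3]$ avoids the secant variety of the Veronese. This is precisely where the genericity of the hyperplanes is used, and for the explicit forms above it reduces to the concrete check that $\lambda_1^2+\lambda_2\lambda_3$ is nondegenerate. The scheme-theoretic reducedness at the end is then routine, following Lemma~\ref{smoothness} verbatim.
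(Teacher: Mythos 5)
Your proposal is correct, and it arrives at the same intermediate object as the paper---the image of the vertex map $\psi\colon F_1(Y)\to\PP^4$ is a projected Veronese surface---but by a genuinely different mechanism. The paper expresses the containment condition as a rank-$\le 2$ condition on a $5\times 3$ matrix of linear forms in the vertex, writes down the seven cubic generators of the ideal of $3\times 3$ minors, and identifies the resulting surface with $\PP^2$ by citing the classification of smooth varieties of almost minimal degree \cite[Theorem 1.1]{Par07}; the fibers of $\psi$ are then controlled by restricting the blow-up description $H_1(Y^4_5)=\mathrm{bl}_{C_0}\PP^4$ of Proposition \ref{l1}, the point being that the conic $C_0$ is a Cartier divisor in the surface, so the induced blow-up is an isomorphism. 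You instead \emph{parameterize} the image: every member $\omega_\lambda$ of the net of skew forms has rank exactly $4$ (your check that the five Pfaffian quadrics have no common zero), the kernel map $\lambda\mapsto y(\lambda)=[\lambda_2^2:\lambda_1\lambda_2:\lambda_1\lambda_3:-\lambda_3^2:\lambda_2\lambda_3-\lambda_1^2]$ is the Veronese composed with a linear projection whose centre is the nondegenerate conic $\lambda_1^2+\lambda_2\lambda_3$, hence lies off the secant (rank $\le 2$) locus, and the projection is therefore an embedding; your parameterization is consistent with the paper's ideal, as one checks that $A(\lambda)y(\lambda)=0$. This buys you a self-contained argument with an explicit two-sided inverse $\nu$ to $\psi$ and a direct proof that every fiber is a single point (rank $M(y)\le 1$ is excluded by the injectivity of $\nu$ together with the absence of rank-$2$ forms in the net), at the cost of the Pfaffian computation; the paper's route is computationally lighter but leans on an external classification theorem and on the $4$-fold analysis. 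The concluding scheme-theoretic step---$h^1(N_{Z/Y})=0$ for one line, local complete intersection via \cite{KOL}, Cohen--Macaulay plus generically reduced implies reduced---is the same argument as Lemma \ref{smoothness} in both treatments.
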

\begin{proof}
Consider the vertex map $F_1(Y)\subset Gr(1,3,5)\lr Gr(1,5)$. Then for a vertex $p=[a_0:a_1:a_2:a_3:a_4]$, a line in $Gr(2,5)$ with the vertex $p$ is contained in $Y$ if and only if
\[
\mathrm{rank} \left(
\begin{matrix}
a_3 & 0& a_2\\
a_2 & a_3 & a_4\\
-a_1&-a_4&-a_0\\
-a_0&-a_1&0\\
0&a_2&-a_1
\end{matrix}
\right)\leq 2.
\]
Then the condition says that the defining ideal of the image $\mathrm{Im}(F_1(Y))$ in $Gr(1,5)=\PP^4$ is given by
\begin{multline*}
\langle a_1 a_2 a_3+a_0 a_3^2-a_2^2 a_4+a_3 a_4^2,a_2^3-a_1a_3^2-a_2 a_3 a_4,a_1 a_2^2+a_0 a_2 a_3-a_1 a_3 a_4,a_0 a_2^2+a_1^2 a_3,\\
a_1^2 a_2+a_0 a_1 a_3+a_0 a_2 a_4,a_0 a_1 a_2+a_0^2 a_3+a_1^2 a_4+a_0 a_4^2,a_1^3-a_0^2 a_2+a_0 a_1 a_4\rangle.
\end{multline*}
It is well-known that this is isomorphic to a $\PP^2$, which is a projected Veronese surface (\cite[Theorem 1.1]{Par07}). Note that $\mathrm{Im}(F_1(Y))\cong \PP^2$ contains the distinguished conic $C_0$ in Proposition \ref{l1}. Thus $F_1(Y)=bl_{C_0}\mathrm{Im}(F_1(Y))\cong \PP^2$.
\end{proof}

\begin{prop}
The Hilbert scheme $H_2(Y)$ of conics is isomorphic to $Gr(4,5)\cong \PP^4$.
\end{prop}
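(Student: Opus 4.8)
The plan is to run the relative enveloping construction of Lemmas \ref{H6}--\ref{H8} and Propositions \ref{birmodel}, \ref{birmodel2} one last time, now cutting with all three hyperplanes, and to observe that the correction degenerates into a genuine isomorphism. Let $\cU$ be the universal subbundle on $Gr(4,5)$ and set
\[
\cK:=\ker\bigl\{\wedge^2\cU\subset \wedge^2\cO^{\oplus 5}\to \cO^{\oplus 3}\bigr\},
\]
where the second arrow is $(p_{12}-p_{03},\,p_{13}-p_{24},\,p_{14}-p_{02})$. As in Remark \ref{corr}, for $[V_4]\in Gr(4,5)$ the cycle $\sigma_{1,1}(V_4)=Gr(2,V_4)$ is a smooth quadric fourfold in $\PP(\wedge^2V_4)\cong\PP^5$, and $H_1\cap H_2\cap H_3$ meets $\PP(\wedge^2V_4)$ in the plane $\PP(\cK_{[V_4]})\cong\PP^2$, which cuts the quadric along the conic $\sigma_{1,1}(V_4)\cap Y$. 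This assignment is inverse to the enveloping map $\eta_2\circ\imath_2$ of Corollary \ref{H2}, so the entire task is to upgrade this fibrewise description to a scheme isomorphism $Gr(4,5)\cong H_2(Y)$.

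First I would verify that $\cK$ is locally free of rank $3$. Since $\wedge^2\cU$ has rank $6$, this is the surjectivity of $\wedge^2\cU\to\cO^{\oplus3}$ at every point, i.e.\ the linear independence of the three skew forms $\omega_1,\omega_2,\omega_3$ dual to $H_1,H_2,H_3$ after restriction to each $V_4$. A nonzero combination $\sum\lambda_i\omega_i$ restricts to zero on some $4$-plane precisely when it admits a $4$-dimensional isotropic subspace, equivalently when its rank is $\le 2$; but the rank $\le 2$ forms are exactly the decomposable ones, filling the image of $Gr(2,(\CC^5)^\vee)\hookrightarrow\PP(\wedge^2(\CC^5)^\vee)=\PP^9$, a sixfold. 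For general $H_i$ the plane $\langle\omega_1,\omega_2,\omega_3\rangle\subset\PP^9$ avoids this sixfold because $2+6<9$, so $\cK$ is a bundle of rank $3$. Hence $S(Y):=Gr(3,\cK)$, being the Grassmannian of $3$-planes in a rank $3$ bundle, satisfies $S(Y)\cong Gr(4,5)\cong\PP^4$.

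Next I would show that the blow-up/-down relating $S(Y)$ to $H_2(Y)$ is trivial in this case. In Propositions \ref{birmodel} and \ref{birmodel2} the only discrepancy between $S(Y)$ and $H_2(Y)$ was the centre $T(Y)=S(Y)\cap T(G)$ of conics contained in a $\sigma_{3,1}$- or $\sigma_{2,2}$-plane of $Y$, and such a plane is a point of the Fano variety $F_2(Y)$. The family $\{\Pi\}\sqcup C_0$ of planes in the fourfold $Y^4_5$ (Lemma \ref{p2}) is one-dimensional, while a fixed $2$-plane lies in a general third hyperplane $H_3$ only in codimension $3$; the count $1+(9-3)<9$ shows that no plane of $Y^4_5$ survives in $Y=Y^4_5\cap H_3$, so $F_2(Y)=\emptyset$ and $T(Y)=\emptyset$. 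With empty centre the morphisms $\Xi$ and $\Psi$ of \eqref{diahil}, restricted over $Y$, are isomorphisms, and we obtain
\[
H_2(Y)\;\cong\;S(Y)\;=\;Gr(3,\cK)\;\cong\;Gr(4,5)\;\cong\;\PP^4,
\]
recovering the value in \eqref{sI1} and Proposition \ref{H11}.

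The main obstacle is the passage from a fibrewise bijection to a scheme-theoretic isomorphism. Concretely I must check that $\{\sigma_{1,1}(V_4)\cap Y\}_{[V_4]}$ is a flat family over $Gr(4,5)$, so that $V_4\mapsto[\sigma_{1,1}(V_4)\cap Y]$ is a genuine morphism $Gr(4,5)\to H_2(Y)$; flatness holds because every fibre is a plane conic of constant Hilbert polynomial $2t+1$ cut on the quadric $\sigma_{1,1}(V_4)$ by the rank $3$ fibre of $\cK$. Repeating the Cohen--Macaulay argument of Lemma \ref{smoothness} shows that $H_2(Y)$ is smooth and irreducible, so this bijective (hence finite and birational) morphism from $\PP^4$ is an isomorphism onto the normal target by Zariski's main theorem.
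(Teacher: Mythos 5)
Your argument is correct and follows essentially the same route as the paper: identify $H_2(Y)$ with $S(Y)=Gr(3,\cK)$ via the relative enveloping construction of \eqref{diahil}, check that $\cK$ has rank $3$ so that $S(Y)\cong Gr(4,5)$, and observe that $Y$ contains no planes so the blow-up centres of Propositions \ref{birmodel} and \ref{birmodel2} are empty. The only difference is cosmetic: where the paper invokes the coordinate computation of Lemma \ref{p1} for the absence of planes and asserts the local freeness of $\cK$ directly, you substitute dimension counts (decomposable $2$-forms in $\PP(\wedge^2(\CC^5)^\vee)$, and planes of $Y^4_5$ lying in $H_3$), which is a legitimate alternative for general hyperplanes.
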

\begin{proof}
By the argument in the proof of Lemma \ref{p1}, one easily see that $Y$ does not contain any planes. Furthermore, 
let \[\cK:= \mathrm{ker}\{\wedge^2\cU\subset \wedge^2 \cO^{\oplus 5}\to \cO^{\oplus 3} \}\] be the kernel where $\cU$ is the universal bundle over $Gr(4,5)$ and the second arrow is $(p_{12}-p_{03},p_{13}-p_{24},p_{14}-p_{02})$.
Then one can easily see that $\cK$ is a rank $3$ vector bundle. And thus $H_2(Y)_{\mathrm{red}}\cong S(Y)\cong Gr(4,5)$. By the same argument in Lemma \ref{smoothness}, one can show that $H_2(Y)_{\mathrm{red}}=H_2(Y)$.
\end{proof}

\bibliographystyle{amsplain}

\end{document}